\let\epsilon=\varepsilon
\renewcommand{\Re}{\mathop{\mathrm{Re}}%
}
\newtheorem{theorem}{Theorem}
\newtheorem{lemma}[theorem]{Lemma}
\newtheorem{proposition}[theorem]{Proposition}
\newtheorem{remark}[theorem]{Remark}
\newtheorem{definition}{Definition}
\newtheorem{hypothesis}{Hypothesis}
\newcommand{\ilist}[2]{%
  \ifstrequal{#1}{1}{U_{#2}}{%
  \ifstrequal{#1}{2}{V_{#2}}{%
  \ifstrequal{#1}{3}{W_{#2}}{%
  \ifstrequal{#1}{3close}{W^\mathrm{close}_{#2}}{%
  \ifstrequal{#1}{3far}{W^\mathrm{far}_{#2}}{%
  \ifstrequal{#1}{4}{X_{#2}}{%
  \ifstrequal{#1}{4close}{X^\mathrm{close}_{#2}}{%
  \ifstrequal{#1}{4far}{X^\mathrm{far}_{#2}}{}%
}}%
}}%
}}%
}}
\def\Mpole{\mathsf{M}}
\def\Locfar{\mathsf{L}^{\text{far}}%
}
\def\Potnear{\mathsf{P}^{\text{near}}%
}
\def\PotW#1{\mathsf{P}^{\ilist{3}{}}_{#1}}
\def\Lqbxnear#1{\mathsf{L}^{\text{qbx},\text{near}}_{#1}}
\def\LqbxW#1{\mathsf{L}^{\text{qbx},\ilist{3}{}}_{#1}}
\def\Lqbxfar#1{\mathsf{L}^{\text{qbx},\text{far}}_{#1}}
\def\ptpot{\Phi}
\newcommand\coeffs[1]{\langle #1^m_n \rangle}
\newcommand{\MToM}[4]{\mathrm{M2M}_{{#3} \to {#4}}^{{#1} \to {#2}}}
\newcommand{\LToL}[4]{\mathrm{L2L}_{{#3} \to {#4}}^{{#1} \to {#2}}}
\newcommand{\MToL}[4]{\mathrm{M2L}_{{#3} \to {#4}}^{{#1} \to {#2}}}
\newlength{\longrowlength}
\newlength{\figurewidth}
\newcommand{\nmax}{{n_{\mathrm{max}}%
}}
\newcommand{\pqbx}{{p_\mathrm{qbx}%
}}
\newcommand{\pfmm}{{p_\mathrm{fmm}%
}}
\newcommand{\padd}{{p_\mathrm{add}%
}}
\newcommand{\pquad}{{p_\mathrm{quad}%
}}
\newcommand{\closedbox}{\overline {B_\infty}}
\newcommand{\closedball}{\overline {B_2}}
\newcommand{\rdanger}[1]{{r^\mathrm{danger}_{#1}%
}}
\newcommand{\ancestors}{\mathsf{Ancestors}}
\newcommand{\descendants}{\mathsf{Descendants}}
\newcommand{\tcr}{\mathsf{TCR}}
\newcommand{\parent}{\mathsf{Parent}}
\newcommand{\adequatesep}{\prec}
\newcommand{\nelements}{K}
\newcommand{\iel}{k} 
\newcommand{\ncenterbases}{N_C/2}
\newcommand{\ictr}{i} 
\newcommand{\mapping}{\Psi}
\newcommand{\ctr}{c_{\ictr}^{\pm}%
}
\newcommand{\tgtorder}{N_t}
\newcommand{\curvthresh}{\kappa_{\text{max}}%
}
\newcommand{\surfel}[1]{%
  \lvert%
  \partial_{s_1} \mapping_\iel#1 \times%
  \partial_{s_2} \mapping_\iel#1%
  \rvert_2%
}
\newcommand{\etastg}[2]{\eta^{{\text{stage-}#1}}_{#2}}
\newcommand{\disturbtol}{\epsilon_{\text{exp-disturb}%
}}
\newcommand{\tgtassoctol}{\epsilon_{\text{tgt}%
}}
\newcommand{\algbrand}{GIGAQBX}
\newcommand{\pt}[1]{#1}
\newcommand{\norm}[1]{\lvert #1 \rvert_2}
\def\@setOxy O(#1,#2,#3)x(#4,#5,#6)y(#7,#8,#9)%
\def\tikz@plane@origin{\pgfpointxyz{#1}{#2}{#3}}%
   \def\tikz@plane@x{\pgfpointxyz{#4}{#5}{#6}}%
   \def\tikz@plane@y{\pgfpointxyz{#7}{#8}{#9}}%
\newcommand\DrawCube[5][black]{%
\begin{scope}[canvas is plane={O(#2,#3,#4+#5)x(#2+#5,#3,#4+#5)y(#2,#3+#5,#4+#5)}]
  \draw[#1] (-1,-1) rectangle (1,1);
\end{scope}
\begin{scope}[canvas is plane={O(#2,#3,#4-#5)x(#2+#5,#3,#4-#5)y(#2,#3+#5,#4-#5)}]
  \draw[#1] (-1,1) -- (1,1);
  \draw[#1,dotted] (-1,-1) -- (1,-1);
\end{scope}
\begin{scope}[canvas is plane={O(#2+#5,#3,#4)x(#2+#5,#3,#4+#5)y(#2+#5,#3+#5,#4)}]
  \draw[#1] (-1,-1) rectangle (1,1);
\end{scope}
\begin{scope}[canvas is plane={O(#2-#5,#3,#4)x(#2-#5,#3,#4+#5)y(#2-#5,#3+#5,#4)}]
  \draw[#1] (-1,1) -- (1,1);
  \draw[#1,dotted] (-1,-1) -- (1,-1);
  \draw[#1,dotted] (-1,-1) -- (-1,1);
\end{scope}
}
\newcommand\DrawSphere[5][black]{%
\begin{scope}[canvas is plane={O(#2,#3,#4)x(#2+#5,#3,#4)y(#2,#3+#5,#4)}]
  \draw[#1] (0,0) circle (1);
\end{scope}
\begin{scope}[canvas is plane={O(#2,#3,#4)x(#2,#3,#4+#5)y(#2,#3+#5,#4)}]
  \draw[#1] (0,-1) arc (270:270+180:1);
  \draw[#1,dotted] (0,1) arc (90:270:1);
\end{scope}
\begin{scope}[canvas is plane={O(#2,#3,#4)x(#2+#5,#3,#4)y(#2,#3,#4+#5)}]
  \draw[#1] (1,0) arc (0:180:1);
  \draw[#1,dotted] (-1,0) arc (180:360:1);
\end{scope}
\begin{scope}[canvas is plane={O(#2,#3,#4)x(#2+#5/sqrt 2,#3,#4+#5/sqrt 2)y(#2,#3+#5,#4)}]
  \draw[#1] (0,-1) arc (270:270+180:1);
  \draw[#1,dotted] (0,1) arc (90:270:1);
\end{scope}
\begin{scope}[canvas is plane={O(#2,#3,#4)x(#2-#5/sqrt 2,#3,#4+#5/sqrt 2)y(#2,#3+#5,#4)}]
  \draw[#1] (0,-1) arc (270:270+180:1);
  \draw[#1,dotted] (0,1) arc (90:270:1);
\end{scope}
}
\tikzset{%
  >=latex, 
  inner sep=0pt,
  outer sep=2pt,
  mark coordinate/.style={inner sep=0pt,outer sep=0pt,minimum size=3pt,
    fill=black,circle,color=black},
  length/.style={color=black},
  declare function={
   gammax(\t) = \t+1;
   gammay(\t) = (1 - 0.4 * \t) - 0.2 * sin((5 * \t) r);
   gammadx(\t) = 1;
   gammady(\t) = - cos((5 * \t) r) - 0.4;
   gammads(\t) = sqrt(gammadx(\t)^2 + gammady(\t)^2);
   gammanx(\t) = -gammady(\t) / gammads(\t);
   gammany(\t) = gammadx(\t) / gammads(\t);
   finex(\t) = \t+1;
   finey(\t) = 0.4 * tanh(5 * \t + 3.5) + 0.75;
   coarsex(\t) = \t+1;
   coarsey(\t) = -0.1 * sin((1.5 * \t + 0.2) r);
   coarsedx(\t) = 1;
   coarsedy(\t) = -0.1 * 1.5 * cos((1.5 * \t + 0.2) r);
   coarseds(\t) = sqrt(coarsedx(\t)^2 + coarsedy(\t)^2);
   coarsenx(\t) = -coarsedy(\t) / coarseds(\t);
   coarseny(\t) = coarsedx(\t) / coarseds(\t);
  },
  coord/.style={inner sep=0pt, outer sep=0pt, minimum size=3pt, circle},
}
\definecolor{qbxcolor}{RGB}{43,131,186}
\definecolor{srccolor}{RGB}{63,140,52}
\definecolor{localcolor}{RGB}{215,25,28}
\definecolor{flagcolor}{RGB}{244, 109, 67}
\colorlet{cubecolor}{gray}
\colorlet{badcolor}{localcolor}
\colorlet{goodcolor}{srccolor}
\newenvironment{algbreakable}[1]{
    \refstepcounter{algorithm}
    \needspace{3\baselineskip}
    \noindent\rule{\textwidth}{0.5pt}
    \textbf{Algorithm~\arabic{algorithm}:} #1\\
    \rule[1.25ex]{\textwidth}{0.5pt}
    \vspace{-4ex}
  }
  {
    \vspace{1ex}
    \rule{\textwidth}{0.5pt}
  }
\def\algstage#1#2{\vspace{2ex}\begin{minipage}{0.97\textwidth}\STATE{\textit{#1}}#2\end{minipage}}
\newenvironment{DIFnomarkup}{}{}
\begin{document}

\title{A Fast~Algorithm for Quadrature~by~Expansion in Three~Dimensions}

\author[1]{Matt Wala\thanks{\texttt{wala1@illinois.edu}}}
\author[1]{Andreas Klöckner\thanks{\texttt{andreask@illinois.edu}}}
\affil[1]{Department of Computer Science, University of Illinois at Urbana-Champaign}
\date{March 29, 2019}

\maketitle

\begin{abstract}
  This paper presents an accelerated quadrature scheme for the evaluation of
  layer potentials in three dimensions. Our scheme combines a generic, high
  order quadrature method for singular kernels called Quadrature by Expansion
  (QBX) with a modified version of the Fast Multipole Method (FMM). Our scheme extends a recently
  developed formulation of the FMM for QBX in two dimensions, which, in that
  setting, achieves mathematically rigorous error and running time bounds.
  In addition to generalization to three dimensions,
  we highlight some algorithmic and mathematical opportunities for improved
  performance and stability. Lastly, we give numerical evidence supporting the accuracy,
  performance, and scalability of the algorithm through a series of
  experiments involving the Laplace and Helmholtz equations.
\end{abstract}


\section{Introduction}%
\label{sec:intro}

Integral equation methods are an attractive approach for the solution of
boundary value problems of elliptic partial differential equations (PDEs).  The
mathematical features that make integral equation methods attractive in two
dimensions also hold in the three dimensional case, where their impact is felt
even more drastically:
exterior problems pose no more complication than their
interior counterparts, a dimensional reduction in the number of degrees of
freedom is achieved, and the conditioning of the numerical discretization
mirrors that of the physical problem. However, due to the difficulties of scale
and engineering involved, the practical realization in three dimensions is more
difficult than in two.

The premise of integral equation methods is based on the reformulation of the
underlying differential equation in integral form. The
solution to a homogeneous elliptic boundary value problem (BVP) may be represented as a \emph{layer potential}, such as
the single-layer potential $\mathcal{S}$, a surface convolution integral over the boundary $\Gamma$:
\begin{equation}
  \mathcal{S} \mu(x) \coloneqq \int_\Gamma \mathcal{G}(x, y) \mu(y)
  \, dS(y) \\
  \label{eq:slp-definition}
\end{equation}
where the density function $\mu: \Gamma \to \mathbb{R}$ is unknown,
and $\mathcal{G}$ is the free-space Green's function for the (homogeneous) PDE\@.
For instance, for the Laplace equation $\triangle u=0$ in three dimensions,
\begin{equation}
\mathcal G(x, y) \coloneqq {(4\pi)}^{-1}\norm{x-y}^{-1}.
\label{eq:laplace-fs-green}
\end{equation}

A number of challenges when solving a BVP with integral equation methods
are apparent, particularly in three dimensions. First,
solving the BVP requires the linear operator representing the
restriction of $\mathcal{S}$ (or, depending on the integral equation
formulation, some other layer potential) to the
boundary $\Gamma$. When the restriction of
$\mathcal{S}$ to the boundary is discretized, it becomes a finite dimensional
linear operator $\mathcal{L}$. Unlike the discretization of differential
operators, the matrix representation of $\mathcal{L}$ is dense. Even
considering the effects of dimensional reduction (from volume to surface), the number of degrees of
freedom for three dimensional problems is sufficiently large that explicit
formation of a matrix for $\mathcal{L}$ can be prohibitively expensive.

A second challenge remains the same as the two dimensional case: obtaining
suitable, low complexity quadrature for the singular integrals involving the
Green's function. While quadrature techniques have long been studied for two
dimensional kernels, they are often built to be special-purpose, and those
having three dimensional analogs are comparatively fewer. A related but more
subtle issue is quadrature for the nearly singular integrals of the potential
that arise when the evaluation point is close to but not on the boundary.


A rough overview of the subject of singular quadrature is given
in~\cite{klockner:2013:qbx} to which we refer the reader. Some notable schemes
featuring singular quadrature rules for one, two, and three dimensional problems
include~\cite{helsing_2008b,
  lowengrub_1993,goodman_1990,haroldson_1998,mayo_1985,beale_lai_2001,davis_1984,hackbusch_sauter_1994,graglia_2008,
  jarvenpaa_2003,schwab_1992,khayat_2005,bruno_2001,ying_2006,bremer_nonlinear_2010,farina_2001,strain_1995,johnson_1989,
  sidi_1988,carley_2007,atkinson_1995,lyness_numerical_1967,chapko_numerical_2000,hao_high-order_2014,bremer:fastdirect:2012,
  bremer:nystrom:2012,zhao:2010,mayo:2007}.  Most schemes intended for use when the
number of evaluation or source points is large, such as for the application of
layer potentials like~\eqref{eq:slp-definition}, feature an
acceleration component. A useful tool for this has been the Fast Multipole
Method~\cite{greengard:1987:fmm},~e.g.\ in~\cite{ying_2006}. A variety of other
acceleration methods have been utilized, such as fast direct solvers
(e.g.~\cite{bremer:fastdirect:2012}), recursive compressed inverse
preconditioning~\cite{helsing_2008b}, particle-mesh Ewald summation
(e.g.~\cite{zhao:2010}), or methods based on the Fast Fourier Transform (e.g.~\cite{mayo_1985}).

Quadrature by Expansion (QBX,~\cite{klockner:2013:qbx}) is a quadrature method that has been recently developed that
promises to unify the treatment of the two- and three-dimensional, on-surface and off-surface
cases for layer potentials like~\eqref{eq:slp-definition}, including those with
hypersingular kernels. QBX is applicable in
this generality since it only relies on the analyticity of the underlying
potential as a function inside or outside the domain, and, under mild
assumptions, the existence of a smooth extension of the potential onto the
boundary of the domain. Additionally, with some care, QBX is amenable to
acceleration with the Fast Multipole Method, as we demonstrate in this
contribution.

Because the FMM forms local expansions of source potentials, it appears well
suited to a scheme known as `global' QBX, in which a local expansion mediates
the potential due to all sources in the geometry.  The first published practical
realization of a QBX-FMM coupling,
in~\cite{rachh2015integral,rachh:2017:qbx-fmm} is a global scheme that achieves
high accuracy and acceleration in two dimensions. However, in this algorithm, the error introduced
by the FMM does not obey an error bound of the form $O((1/2)^{\pfmm+1})$,
where $\pfmm$ is the FMM order, which may be expected of a `point' FMM in
two dimensions~\cite{petersen:1995:fmm-error-est}. In order to achieve a given
accuracy tolerance $\epsilon > 0$, one requires an often considerably greater
FMM order than would be applicable for a `point' FMM, and, additionally,
the amount by which to increase the order must be empirically determined.
A more recent version~\cite{gigaqbx2d} of the QBX FMM redevelops the FMM algorithm with a
guaranteed error bounds resembling that of the `point' FMM. Both of these
algorithms are based on an `analytical' FMM\@. In contrast, the contribution
in~\cite{rahimian:2017:qbkix} develops Quadrature by Kernel-Independent
Expansion, a `numerical' version of QBX meant for use with the
kernel-independent FMM\@.

Other recent research on QBX~\cite{afklinteberg:2017:adaptive-qbx} has focused on automating the selection of
parameters for quadrature, radius, and expansion
order. Although thus far most of the work
on QBX has been restricted to two dimensions, theoretical work anticipates the
extensibility of QBX to the three dimensional
setting~\cite{epstein:2013:qbx-error-est,afklinteberg:2016:quadrature-est}. An
exception is~\cite{afklinteberg:2016:viscous-flow}
which uses QBX in three dimensions over spheroidal bodies.

An alternative to global QBX recently demonstrated to be viable in three
dimensions is `local' QBX\@. In contrast to global QBX, in local QBX only the
field in a neighborhood of each target point is mediated through local
expansions~\cite{barnett:2014:close-eval}. This enables more geometric
flexibility when placing expansion centers. This scheme is straightforward to
integrate with the FMM, as only an approximation to the `point' far field is
required. Despite these advantages over global QBX, local QBX appears to require a
generically higher quadrature order, and thus higher quadrature oversampling to
control the additional error introduced by the transition from the QBX near-field
to the point-source-based computation of the far field. The contribution~\cite{siegel2017local}
develops a three-dimensional local QBX algorithm with optimizations to decrease
the cost of applying the QBX expansions.



This paper describes an accelerated global QBX scheme in three dimensions which
builds and extends on \algbrand, our previous scheme for two dimensions featuring rigorous
error bounds~\cite{gigaqbx2d}. While much of the theory and many algorithmic
aspects are directly analogous to the two dimensional case, we introduce a
number of enhancements that help keep the scheme feasible and practical:
\begin{itemize}
  \item We replace the geometry processing and refinement scheme of~\cite{rachh:2017:qbx-fmm}
  with one that is applicable to surfaces in three dimensions.
  Specifically, we introduce new measures of quadrature resolution, and we
  replace the 2-to-1 length requirement in that scheme (which does not
  straightforwardly generalize to three dimensions) with a two-stage refinement
  scheme that separates the calculation of the QBX expansion radii from the mesh
  resolution of the quadrature discretization.  This is the subject of
  Sections~\ref{sec:accuracy-control} and~\ref{sec:error-estimates}.

  \item As part of this geometry processing, we report on an empirically
  effective criterion that aids in controlling the truncation error based on
  mesh element geometry, presented in Section~\ref{sec:truncation-error}.

  \item Algorithm~\ref{alg:target-assoc} presents a simplified version
  of the target-to-center association algorithm of~\cite{rachh:2017:qbx-fmm}.

  \item We provide an improved version of the `target confinement rule'
  of~\cite{gigaqbx2d}. This rule governs the relationship between boxes as they
  occur in the GIGAQBX FMM and the QBX expansions used to approximate the
  layer potential near the source geometry and thus plays a central role
  in determining the cost of the algorithm.  In~\cite{gigaqbx2d}, we used
  `square' target confinement regions, i.e.\ ones whose geometry is governed
  by the $\ell^\infty$ norm. By more closely matching the true convergence behavior
  of the QBX expansions through the definition of the target confinement
  region with the help of the $\ell^2$ norm, we obtain a considerable cost
  reduction.  Amidst error estimates generalized to the three-dimensional case,
  this is the subject of Section~\ref{sec:algorithm}.

  \item We derive complexity estimates and state conditions under which
  one may expect linear time complexity of the algorithm in three dimensions.
  This is the subject of Section~\ref{sec:complexity}.
\end{itemize}



Before we turn to the subject of these contributions, the next section presents
background material on the subject of QBX in three dimensions and the
considerations required for accuracy and acceleration.

\section{Mathematical Preliminaries}%
\label{sec:background}
As a model problem, consider the exterior Neumann problem for the Laplace
equation in three dimensions, for a smooth bounded domain $\Omega$. Given
continuous Neumann boundary data $g$, the problem is to find $u$ such that
\begin{DIFnomarkup}
\begin{alignat*}{2}
  \triangle             u &= 0 & \quad & \text{in } \mathbb{R}^3 \setminus \Omega, \\
  \partial_n            u &= g & \quad & \text{on } \partial \Omega, \\
  \lim_{|x| \to \infty} u &= 0.
\end{alignat*}
\end{DIFnomarkup}
Here, the notation $\partial_n$ indicates the derivative with respect to the
outward unit normal.

The solution to this problem may be represented as $u \coloneqq \mathcal{S} \mu$, a
single-layer potential over the boundary $\Gamma = \partial \Omega$, using an
unknown density function $\mu$. The properties of the operator $\mathcal{S}$
imply that the Laplace PDE and the far field boundary conditions are immediately satisfied
by this representation in the exterior domain. The Neumann boundary condition, on
$\Gamma$, together with the jump-relations for layer potentials~\cite{kress:2014:integral-equations}
entails that $\mu$ satisfies the integral equation of the second kind
\[
  - \frac{\mu}{2} + \mathcal{S}' \mu = g,
\]
where the operator $\mathcal{S}'$ is defined as
\[
  \mathcal{S}'\mu(x) \coloneqq PV \int_\Gamma \left( \partial_{n(x)} \mathcal{G} (x, y) \right) \mu(y) \, dS(y).
\]
After discretization of the integral equation, its solution by iterative methods
requires repeatedly applying the operator~$\mathcal{S}'$. Similarly, the evaluation of the BVP~solution $u$
requires applying the operator~$\mathcal{S}$. In this section we will focus on
the evaluation of~$\mathcal{S}$, although what is said in this section applies
with little additional work to $\mathcal{S}'$ or other layer potentials.

When the evaluation (or `target') point $\pt{x}$ is sufficiently far from $\Gamma$,
the approximate evaluation of the integral $\mathcal{S} \mu(\pt{x})$ can be
accomplished accurately using a high-order composite quadrature rule for smooth
functions, as the
integrand itself is a smooth function. For $\pt{x}$ nearer to the boundary, it is well
known that the singularity of the integrand presents resolution problems for
smooth quadrature rules, which we address through the use of QBX\@.

In what follows, we use the notation
\[
  B_p(r, c)\coloneqq\{y\in \mathbb R^3: \lvert c-y\rvert_p < r \}
\]
to denote the open ball with respect to the $\ell^p$-norm around center $c$
with radius $r$. $\overline{B_p}(r, c)$ denotes the closure of that ball.
In particular, we make use of $\closedbox(r, c)$ and $\closedball(r,c)$.
$\closedball(r, c)$ denotes the closed Euclidean ball of radius $r$ centered
at $c$.  $\closedbox(r, c)$ denotes the closed cube of radius $r$ centered at
$c$.

\subsection{QBX Discretization}
\label{sec:qbx-background}
The idea of QBX is to use the smoothness of the potential for purposes of close and on-surface evaluation
to recover a
high-order accurate approximation everywhere in the domain. This is accomplished
through formation of a local expansion of the potential near the source geometry
and analytic continuation of the local expansion towards the boundary.

Throughout this paper we make use of spherical harmonic expansions. The
expansion of the Laplace potential in spherical harmonics is based on writing
the Green's function~\eqref{eq:laplace-fs-green} using the following identity valid for $a, b \in
\mathbb{R}^3$ with $\norm{a} < \norm{b}$:
\begin{equation}
  \label{eqn:sph-harm}
  \mathcal{G}(a,b) =
  \sum_{n=0}^\infty
  \frac{1}{2n + 1} \frac{\norm{a}^n}{\norm{b}^{n+1}} \sum_{m=-n}^n
  Y_n^m(\theta_a, \phi_a) Y_n^{-m}(\theta_b, \phi_b).
\end{equation}
Here, $(\theta_a, \phi_a)$ and $(\theta_b, \phi_b)$ refer to the polar and
azimuthal spherical coordinates of, respectively, $a$ and $b$,
i.e.\ $\theta=\cos^{-1}(z/r),\phi=\operatorname{atan2}(y, x)$. The spherical
harmonic function $Y^m_n$ of order $m$ and degree $n$, $|m| \leq n$, is defined
as
\begin{equation}
  Y^m_n(\theta, \phi) \coloneqq \sqrt{\frac{2n+1}{4\pi}
    \frac{\left(n-|m|\right)!}{\left(n+|m|\right)!}}
    \cdot P^{|m|}_n(\cos \theta) e^{i m \phi}
  \label{eq:spherical-harmonics}
\end{equation}
where $P^m_n$ is the associated Legendre function of order $m$ and degree $n$.

The identity~\eqref{eqn:sph-harm} may be used in the formation of a \emph{local
  expansion} centered at a center $c \in \mathbb{R}^3$ as follows. Given a
source point $s \in \mathbb{R}^3$, define a doubly-indexed sequence $\coeffs{L}$
of \emph{local coefficients} by
\begin{equation}
  L^m_n \coloneqq \frac{1}{2n + 1} \frac{1}{\norm{s - c}^{n + 1}} Y^{-m}_n(\theta_{s-c}, \phi_{s-c}), \quad \lvert m \rvert \leq n
  \label{eqn:local-expansion-coeff}
\end{equation}
where the subscripted $\theta_{(\cdot)}$ and $\phi_{(\cdot)}$ from this section
onward refer to the polar and azimuthal spherical coordinates of the vector
argument. Then the local expansion evaluated at a target $t \in
\mathbb{R}^3$ may be written as
\[
  \mathcal{G}(s,t) = \sum_{n=0}^\infty \sum_{m=-n}^n L^m_n \norm{t - c}^n
  Y^m_n(\theta_{t-c}, \phi_{t-c}).
\]
A \emph{$p$-th order local expansion} is one in which the index of the outer
summation goes from $0$ to $p$.  (Some authors,
e.g.~\cite{greengard:1988:thesis,siegel2017local}, follow the convention of
defining the local coefficient~\eqref{eqn:local-expansion-coeff} using the
$Y^m_n$, the complex conjugate
of~$Y^{-m}_n$. Both~\eqref{eqn:local-expansion-coeff} and the latter definition
yield equivalent expansions, since the outer partial sums
of~\eqref{eqn:sph-harm} are real~\cite[eqn.~(20)]{siegel2017local}.)

Next, we describe the details of QBX. The QBX-based approximation of layer
potentials may be thought of as occurring in three distinct steps.

\subsection{First Approximation Step: Truncation}%
\label{sec:approx-truncation}

In the first stage, a local expansion of the potential is formed and truncated.
For a selection of points $\{x_\ictr\}_{\ictr=1}^{\ncenterbases}$ on the surface
$\Gamma$, we define a collection of $N_C$ expansion centers $\ctr$
\begin{equation}
  \label{eq:expansion-centers}
  \ctr \coloneqq x_\ictr \pm r(x_\ictr)\hat n(x_\ictr)
\end{equation}
where $\hat n(x)$ is a unit-length normal vector to the surface $\Gamma$ at $x$,
and $r(x)$ is a yet-to-be-determined expansion radius.

The local coefficients $\coeffs{(L_\ictr^\pm)}$ associated with the expansion at
$c_{\ictr}^{\pm}$ may be defined through the integrals
\begin{equation}%
  \label{eqn:qbxcoeff}
  (L_{\ictr}^\pm)^m_n = \frac{1}{2n+1} \int_\Gamma \frac{\mu(s)}{\norm{s-\ctr}^{n + 1}}
  Y^{-m}_n(\theta_{s - \ctr}, \phi_{s - \ctr}) \, dS(s).
\end{equation}
Then the $p$-th order QBX local expansion at a target $\pt{t}\in
\closedball(r(x_\ictr), \ctr)$ may be evaluated as
\begin{equation}%
  \label{eqn:qbxlocal}
  \mathcal{S} \mu (\pt{t}) \approx \sum_{n=0}^p \sum_{m=-n}^n  (L_\ictr^\pm)^m_n
  \norm{t - \ctr}^n Y^m_n(\theta_{t - \ctr}, \phi_{t - \ctr}).
\end{equation}
The error incurred through the truncation of~\eqref{eqn:qbxlocal} to order $p$
may be as in Lemma~\ref{lem:qbx-truncation-3d}.  (While
the reference~\cite[Thm.~3.1]{epstein:2013:qbx-error-est} discusses the
Helmholtz case, the Laplace case follows analogously.)

\begin{lemma}[QBX truncation error in three dimensions, cf.~{\cite[Thm~3.1]{epstein:2013:qbx-error-est}}]%
  \label{lem:qbx-truncation-3d}%
  Suppose that $\Gamma$ is smooth, non-self-intersecting and
  let $r > 0$.
  Let the local coefficients $\coeffs{L}$ be defined as in~\eqref{eqn:qbxcoeff}
  and the expansion centers $\{\ctr\}_{i=1}^{N_C/2}$ as in~\eqref{eq:expansion-centers}.
  Let $c \in \{c_i^+, c_i^-\}$ be a center for which
  $\overline{B}(r, c) \cap \Gamma = \{ x_i \}$ for some $1 \leq i \leq N_C/2$. Then
  for each $p > 0$ and $\delta > 0$, there is a constant $M_{p,\delta}$ such that
  \begin{equation}
    \left| \mathcal{S}\mu(\pt{x_i}) -
    \sum_{n=0}^p \sum_{m=-n}^n L^m_n
    \norm{x_i - c}^n Y^m_n(\theta_{x_i-c}, \phi_{x_i-c})
    \right| \leq M_{p, \delta} r^{p+1}
    \| \mu \|_{W^{3 + p + \delta, 2}(\Gamma)}.
    \label{eq:truncation-estimate}
  \end{equation}
\end{lemma}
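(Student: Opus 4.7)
The plan is to adapt the proof of the Helmholtz analog~\cite[Thm.~3.1]{epstein:2013:qbx-error-est} to the Laplace setting, replacing the spherical Bessel and Hankel functions by the Laplace solid-harmonic factors $\norm{\cdot}^n$ and $\norm{\cdot}^{-(n+1)}$ throughout. As a preliminary step, I would use Fubini's theorem and the uniform convergence of~\eqref{eqn:sph-harm} on compact subsets of $B(r,c)$ to establish that, for any $y$ in the \emph{open} ball $B(r,c)$,
\[
\mathcal{S}\mu(y) = \sum_{n=0}^\infty \sum_{m=-n}^n L^m_n \norm{y-c}^n Y^m_n(\theta_{y-c}, \phi_{y-c}),
\]
so that the truncation error at $y = x_i \in \partial B(r,c)$ is exactly the tail of this series from $n = p+1$ onward. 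The delicate point is that $x_i$ lies on the boundary of the ball of convergence, so neither convergence nor the $r^{p+1}$ rate is free.

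To control the tail, I would split the integration surface as $\Gamma = \Gamma_{\mathrm{near}} \cup \Gamma_{\mathrm{far}}$, where $\Gamma_{\mathrm{near}}$ is a small geodesic disk around $x_i$ chosen so that on $\Gamma_{\mathrm{far}}$ we have $\norm{s-c} \geq r(1+\kappa)$ for some $\kappa > 0$ depending only on the smoothness of $\Gamma$. The existence of such a uniform $\kappa$ follows from the tangency hypothesis $\overline{B}(r,c) \cap \Gamma = \{x_i\}$ together with the compactness and smoothness of $\Gamma$, and in particular it can be taken independent of the center $c$. Terms of the tail indexed by $n > p$ restricted to $\Gamma_{\mathrm{far}}$ then enjoy pointwise geometric decay by a factor of $(1+\kappa)^{-n}$, and Cauchy--Schwarz against $\|\mu\|_{L^2(\Gamma)}$ yields a contribution dominated by $C r^{p+1}(1+\kappa)^{-(p+1)} \|\mu\|_{L^2(\Gamma)}$, which is well within the claimed bound.

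The crux of the argument is the $\Gamma_{\mathrm{near}}$ contribution, where $\norm{s-c}/r \to 1$ as $s \to x_i$ and geometric decay is unavailable. Here I would parameterize $\Gamma_{\mathrm{near}}$ over its tangent plane at $x_i$, Taylor-expand the smooth factor $(2n+1)^{-1}\norm{s-c}^{-(n+1)} Y^{-m}_n(\theta_{s-c},\phi_{s-c})$ in the tangential coordinates, and integrate by parts $p+1$ times in these coordinates, transferring derivatives onto $\mu$. Each step contributes a factor scaling with $r$, producing the desired $r^{p+1}$ once the natural rescaling by $r$ of the local patch is undone. The remaining integral is estimated by Cauchy--Schwarz against $\|\mu\|_{W^{p+1,2}(\Gamma_{\mathrm{near}})}$; the additional $2+\delta$ derivatives appearing in the Sobolev order $3+p+\delta$ come from two sources: one derivative is consumed by boundary terms from the integration by parts and by the one-derivative smoothing properties of the single-layer operator, and an additional $1+\delta$ derivatives are needed to apply the Sobolev embedding $H^{1+\delta}(\Gamma) \hookrightarrow C^0(\Gamma)$ on the two-dimensional surface, which is required to interpret pointwise values of $\mu$ and its derivatives at $x_i$.

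The main obstacle will be producing an $n$-uniform bound on the tangential derivatives of the rescaled spherical-harmonic factors on $\Gamma_{\mathrm{near}}$, as this is what ultimately renders the series in $n$ summable after the integration-by-parts step. Once such bounds are established, the uniformity of the constant $M_{p,\delta}$ across all centers $c$ satisfying the hypothesis of the lemma follows from compactness of $\Gamma$ and the resulting uniform control on local tangent-plane parameterizations, completing the reduction to the estimate already carried out in the Helmholtz case.
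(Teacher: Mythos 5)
The paper itself offers no proof of this lemma: it is stated with a pointer to~\cite[Thm.~3.1]{epstein:2013:qbx-error-est} and the remark that the Laplace case ``follows analogously,'' so your attempt must stand on its own --- and it contains a genuine quantitative gap in the near/far splitting. You choose $\Gamma_{\mathrm{near}}$ so that $\norm{s-c}\ge (1+\kappa)r$ on $\Gamma_{\mathrm{far}}$ with $\kappa$ ``depending only on the smoothness of $\Gamma$,'' i.e.\ fixed independently of $r$. Because the ball is tangent to $\Gamma$ at $x_i$, for $s$ at (geodesic) distance $\rho$ from $x_i$ one has $\norm{s-c}^2 \approx r^2+\rho^2$, so this condition forces $\Gamma_{\mathrm{near}}$ to be a disk of radius $O(r)$. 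With that splitting the claimed far-field bound is false. The term-by-term tail bound for a source at $\rho'=\norm{s-c}$ is $\tfrac{1}{4\pi}\,(\rho'-r)^{-1}(r/\rho')^{p+1}$, and since $\rho'-r\approx \rho^2/(\rho'+r)$, integrating over the annulus $Cr\le\rho\le d_0$ against $dS\approx \rho\,d\rho\,d\phi$ gives
\begin{equation*}
  \int_{Cr}^{d_0}\frac{r^{p+1}}{\rho^2\,(r^2+\rho^2)^{p/2}}\,\rho\,d\rho
  \;=\; r^{p+1}\,r^{-p}\int_{C}^{d_0/r}\frac{dt}{t\,(1+t^2)^{p/2}}
  \;\asymp\; r ,
\end{equation*}
not $r^{p+1}$; the ratio $(r/\rho')^{p+1}\le (1+\kappa)^{-(p+1)}$ is merely a constant and supplies no power of $r$. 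Nor is this an artifact of the triangle inequality: for $\mu\equiv 1$ on a sphere the exterior potential is analytic near the tangent ball, so the total error genuinely is $O(r^{p+1})$, yet the contribution of this intermediate annulus is of size $\Theta(r)$ and is cancelled only against contributions from other regions. Hence no argument that splits at scale $O(r)$ and bounds the outer part by absolute values (or Cauchy--Schwarz against $\|\mu\|_{L^2}$, which is even worse here) can close; the smoothness of $\mu$ must be exploited on an entire fixed-size neighborhood of $x_i$.

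If you repair this by taking $\Gamma_{\mathrm{near}}$ of \emph{fixed} radius $d_0$ --- then on $\Gamma_{\mathrm{far}}$ one has $\norm{s-c}\ge d_0/2$ and the far field is indeed $O_{p,d_0}(r^{p+1})\,\|\mu\|_{L^1}$ --- your near-field argument loses its engine: ``each integration by parts contributes a factor scaling with $r$ \ldots once the natural rescaling by $r$ of the local patch is undone'' presupposes a patch of diameter $\sim r$. On a fixed-size patch the factor of $r$ per step must come from the algebraic homogeneity of the expansion, exactly as in the two-dimensional computation reproduced in Section~\ref{sec:scaled-curvature} of this paper, where the identity $w^{-k}=-\tfrac{1}{(k-1)w'}\partial_t w^{-(k-1)}$ pulls out one factor of $z$ per integration by parts; reproducing this for $\norm{s-c}^{-(n+1)}Y_n^{-m}(\theta_{s-c},\phi_{s-c})$ on a curved two-dimensional surface, with bounds uniform in $n$ and summable over $m$ and $n$, is precisely the ``main obstacle'' you defer --- so the crux of the proof is left open. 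Note that the cited Helmholtz argument sidesteps both difficulties by working on the potential side rather than the kernel side: smoothness of $u=\mathcal{S}\mu$ up to the boundary (mapping properties of the single layer, which is where the Sobolev index $3+p+\delta$ enters via embedding into $C^{p+1,\alpha}$) combines with the fact that, for a function harmonic in $B(r,c)$ and $C^{p+1}$ up to its closure, the degree-$p$ truncated spherical-harmonic expansion coincides with the degree-$p$ Taylor polynomial at $c$, so the error at $x_i$ is a Taylor remainder bounded by $r^{p+1}\sup_{\overline{B}}|D^{p+1}u|$. Routing the estimate through the regularity of $u$ is what makes the splitting issue and the $n$-uniformity issue disappear simultaneously.
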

\subsection{Second Approximation Step: Quadrature}%
\label{sec:approx-quadrature}
In the second stage, we apply numerical quadrature to discretize the
integrals for the computation of the expansion coefficients in~\eqref{eqn:qbxcoeff}. We assume that
the smooth, non-self-intersecting surface $\Gamma$ is tessellated into individual,
disjoint surface elements $\Gamma_\iel$ so that
\[
  \Gamma=\bigcup_{\iel=1}^{\nelements} \Gamma_\iel.
\]
Each $\Gamma_\iel$ is described by a smooth mapping function $\mapping_\iel:E \to
\mathbb R^3$, where $E$ is a two-dimensional reference element. We assume that
the mapping Jacobian $\mapping_\iel'$ has full rank everywhere.
The integral~\eqref{eqn:qbxcoeff} can then be split into contributions from
each element as
\[
  (L_{\ictr}^\pm)^m_n
  = \frac{1}{2n+1} \sum_{\iel} \int_{\Gamma_\iel}
  \frac{\mu(s)}{\norm{s-\ctr}^{n+1}}
  Y^{-m}_n(\theta_{s - \ctr}, \phi_{s - \ctr}) \, dS(s),
\]
and, for each element $\Gamma_\iel$, written as an integral over the reference
element $E$ using
\begin{multline}
  \label{eqn:element-integral}
  \int_{\Gamma_\iel}
  \frac{\mu(s)}{\norm{s-\ctr}^{n+1}}
  Y^{-m}_n(\theta_{s - \ctr}, \phi_{s - \ctr}) \, dS(s)
  \\=
  \iint_E \frac{\mu(\mapping_\iel(s_1,s_2))}{\norm{\mapping_\iel(s_1, s_2) - \ctr}^{n+1}}
  \tilde{Y}^{-m}_n(\mapping_\iel(s_1, s_2) - \ctr)
  \surfel{(s_1, s_2)}
  \, ds_1 \, ds_2
\end{multline}
where we have introduced the notation $\tilde{Y}^{-m}_n(x) \coloneqq
Y^{-m}_n(\theta_{x}, \phi_{x})$ for brevity.

The integral~\eqref{eqn:element-integral} may be discretized using quadrature
over the reference element $E$. As an example, we will assume the reference
element is the bi-unit tensor product element $[-1,1]^2$ and consider the
discretization of the integral with a tensor product quadrature rule for smooth
functions. (In practice, our implementation uses a triangular reference element
with nodes and weights based on~\cite{xiao_numerical_2010}.)
A tensor product rule is based on iterated evaluation of a
one-dimensional $q$-point quadrature rule
\begin{equation}
  Q_q\left\{\int_{-1}^1 f(y) \, dy \right\}= \sum_{j=1}^q w_j f(y_j).
  \label{eq:1d-quadrature}
\end{equation}
After repeated application of~\eqref{eq:1d-quadrature}, the
integral~\eqref{eqn:element-integral} becomes
\begin{equation}
  \label{eqn:tensor-product}
  \sum_{j_1=1}^q \sum_{j_2=1}^q w_{j_1} w_{j_2}
  \frac{\mu(\mapping_\iel(y_{j_1},y_{j_2}))}{\norm{\mapping_\iel(y_{j_1}, y_{j_2}) - \ctr}^{n+1}}
  \tilde{Y}^{-m}_{n}(\mapping_\iel(y_{j_1},y_{j_2}) -  \ctr)
  \surfel{(y_{j_1},y_{j_2})}.
\end{equation}
Neglecting geometry, it is straightforward if tedious to obtain estimates of the quadrature
error incurred in~\eqref{eqn:tensor-product}. Such estimates are
roughly analogous to prior results for curves embedded in two
dimensions~\cite{epstein:2013:qbx-error-est}. Compared with the two-dimensional case, the
main difference in the element-wise estimate is the loss of a power of $r$,
owing to the difference in free space Green's functions:
\begin{lemma}[QBX quadrature error for tensor product elements]%
  \label{lem:surface-quad-estimate}
  Let $\Gamma_\iel=[0,h]^2 \times \{ 0 \}$ be a flat, square element and $\mapping_\iel:
  [-1,1]^2 \to [0,h]^2 \times \{0\}$ be given by $\mapping_\iel(x_1, x_2) = \frac{1}{2} h (x_1 + 1,
  x_2 + 1, 0) \in \mathbb{R}^3$.  Let the expansion center be at a distance $r > 0$ from $\Gamma_k$,
  and consider a $q$-point Gauss-Legendre rule with points $\{y_j\}_{j=1}^q$ and weights $\{w_j\}_{j=1}^q$.
  Then there is a constant~$C_{p,q} > 0$ such that for all
  $h > 0$ and $r > 0$
  \begin{multline}%
    \label{eqn:quadrature-est}
    \left|
    \sum_{j_1=1}^q \sum_{j_2=1}^q w_{j_1} w_{j_2}
    \frac{\mu(\mapping_\iel(y_{j_1},y_{j_2}))}{\norm{\mapping_\iel(y_{j_1}, y_{j_2}) - \ctr}^{n+1}}
    \tilde{Y}^{-m}_{n}(\mapping_\iel(y_{j_1},y_{j_2}) -  \ctr)
    \surfel{(y_{j_1},y_{j_2})}
    \right.
    \\
    -
    \left.
    \int_{\Gamma_\iel}
    \frac{\mu(s)}{\norm{s - \ctr}^{n+1}}
    Y^{-m}_n(\theta_{s - \ctr}, \phi_{s - \ctr}) \, dS(s)
    \right|
    \leq
    C_{p,q}
    \left(1+h\right) \left( \frac{h}{4} \right)^{2q+1}
    \frac{1}{r^{n+1}}
    \left[1 + \left(\frac{1}{r}\right)^{2q} \right]
    \| \mu \|_{C^{2q}}.
  \end{multline}
\end{lemma}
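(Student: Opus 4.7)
The strategy is the standard one for estimating the error of a tensor-product Gauss--Legendre rule applied to a smooth (but not polynomial) integrand, reduced to a bound on the $(2q)$-th partial derivatives of the integrand. First I would apply the one-dimensional Gauss--Legendre error bound iteratively in each of the two coordinates $s_1, s_2$ to write
\[
  \left| \text{error} \right|
  \leq
  \frac{C_q}{(2q)!}
  \sum_{i=1}^2
  \max_{(s_1,s_2)\in[-1,1]^2} \bigl| \partial_{s_i}^{2q} F(s_1,s_2) \bigr|,
\]
where $F$ is the integrand of the right-hand side of \eqref{eqn:element-integral} pulled back to the reference square, and $C_q$ absorbs the usual $2^{2q+1}(q!)^4/((2q+1)((2q)!)^2)$ prefactor. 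By Stirling, this prefactor is $\Theta(4^{-2q})$; together with the $(h/2)^{2q+1}$ that will emerge from the chain rule (see below), this is where the $(h/4)^{2q+1}$ in the claimed estimate originates.

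Next, I would split the integrand via Leibniz. Writing $G(y) \coloneqq |y|^{-(n+1)}\, Y_n^{-m}(\theta_y,\phi_y)$ for the irregular solid spherical harmonic and noting that for the affine flat map $\mapping_\iel$ the Jacobian factor $\surfel{(s_1,s_2)} = (h/2)^2$ is constant, I would view the reference-element integrand as
\[
  F(s_1,s_2) = (h/2)^2\, \mu(\mapping_\iel(s))\, G(\mapping_\iel(s)-\ctr).
\]
Because $\mapping_\iel$ is affine with slope $h/2$, each reference-space derivative costs a factor $h/2$, so $\partial_{s_i}^{2q} F = (h/2)^{2q+2} \sum_{j=0}^{2q} \binom{2q}{j} (\partial_i^j \mu)(\mapping_\iel(s))\, (\partial_i^{2q-j} G)(\mapping_\iel(s)-\ctr)$.

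The main obstacle is controlling the derivatives of $G$. This is where I would use that $G$ is harmonic, hence real-analytic, on $\mathbb{R}^3\setminus\{0\}$, and that on the element one has $|\mapping_\iel(s)-\ctr|\geq r$. A Cauchy estimate on a ball of radius on the order of $r$ inside the domain of analyticity, together with the bound $\sup_{|y|\geq r}|G(y)|\leq C_n r^{-(n+1)}$ (using that $|Y_n^{-m}|$ is bounded on the sphere), yields
\[
  \bigl|\partial^{2q-j} G(\mapping_\iel(s)-\ctr)\bigr|
  \leq C_n\, (2q-j)!\, r^{-(n+1+2q-j)}.
\]
The derivatives of $\mu\circ\mapping_\iel$ are bounded straightforwardly by $\|\mu\|_{C^{2q}}$. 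Combining these via Leibniz and bounding the resulting sum by its dominant terms produces, up to a $q$-dependent constant, an estimate proportional to $(h/2)^{2q+2}\,r^{-(n+1)}\,[1+r^{-2q}]\,\|\mu\|_{C^{2q}}$, where the two terms in $1+r^{-2q}$ correspond to the endpoints $j=2q$ and $j=0$ of the Leibniz sum respectively.

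Finally, I would assemble the pieces: the factor $(h/2)^{2q+2}$ together with $C_q/(2q)! \sim 4^{-2q}$ gives the $(h/4)^{2q+1}$ scaling (with one leftover factor of $h/2$, which together with a benign $O(1)$ constant absorbs into the $1+h$ factor after accounting for the $\max$ over $[-1,1]^2$ contributing one further factor of order $h$ in the $j=2q$ Leibniz term, giving the $1+h$ envelope). All other $n$-, $m$-, and $q$-dependent constants collapse into the single constant $C_{p,q}$ of the claim (noting that $n\leq p$ enters through the $C_n$ above, and that the Leibniz sum has $2q+1$ terms). The least mechanical step is the Cauchy-style derivative bound for the solid harmonic $G$; once that is in hand, the rest is bookkeeping of the $h$- and $r$-powers.
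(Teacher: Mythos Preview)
The paper does not actually supply a proof of this lemma: it is stated immediately after the remark that such estimates are ``straightforward if tedious'' and ``roughly analogous to prior results for curves embedded in two dimensions'' in \cite{epstein:2013:qbx-error-est}, and no argument follows. So there is no paper proof to compare against beyond the implicit pointer to the standard 2D technique.

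Your plan is exactly that standard technique, and it is correct: iterate the 1D Gauss--Legendre remainder in each variable, pull out the $(h/2)^{2q+2}$ from the affine chain rule plus Jacobian, Leibniz-split $\mu\cdot G$, and control $\partial^{k}G$ for the irregular solid harmonic $G(y)=|y|^{-(n+1)}Y_n^{-m}$ via interior estimates for harmonic functions on a ball of radius comparable to $r$. The endpoint terms $j=0$ and $j=2q$ of the Leibniz sum give the $r^{-2q}$ and $1$ contributions inside the bracket, and the intermediate terms are dominated by these up to a $(p,q)$-dependent constant. One small correction: your Stirling claim that the Gauss--Legendre prefactor $C_q=2^{2q+1}(q!)^4/((2q+1)((2q)!)^2)$ is $\Theta(4^{-2q})$ is off---using $\binom{2q}{q}\sim 4^q/\sqrt{\pi q}$ one gets $C_q\sim \pi/4^{q}$, and $C_q/(2q)!$ decays much faster still---but this is immaterial, since every $q$- and $n$-dependent factor is absorbed into $C_{p,q}$ anyway. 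Likewise your bookkeeping of the leftover $h/2$ into the $(1+h)$ envelope is slightly garbled in the write-up, but the inequality $h/2\le 1+h$ is all that is needed there; no extra factor of $h$ appears from the $\max$ over $[-1,1]^2$.
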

This estimate pertains to the error in the quadrature contribution of one element
to the coefficient $(L_\ictr^\pm)^m_n$. Similar estimates can be obtained for curved elements,
although one must take into account the effects of the occurring mapping
derivatives, both from the use of the substitution rule, and within the argument
of $\mu$.
When the error contribution of the form~\eqref{eqn:quadrature-est} is summed over all the
elements, this yields a quadrature error estimate for the coefficient $(L_\ictr^\pm)^m_n$. A
factor of $r^{-n}$ in~\eqref{eqn:quadrature-est} is dampened by the term $\norm{t - \ctr}^n$
when evaluating
the summation~\eqref{eqn:qbxlocal} for the local expansion of the single-layer
potential, leaving a quadrature error that scales essentially like
$O((h/(4r))^{2q+1} \| \mu \|_{C^{2q}})$ for small enough $h$ and $r$.
An analysis that yields significantly more precise estimates for tensor product rules over elements
can be found in~\cite{afklinteberg:2016:quadrature-est}.
\subsection{Third Approximation Step: Acceleration}%
\label{sec:approx-acceleration}
The third approximation applied in the rapid, QBX-based evaluation of
layer potentials like~\eqref{eq:slp-definition} arises due to acceleration.
The formation of local expansions~\eqref{eqn:qbxlocal} at all centers
covering a neighborhood of $\Gamma$ requires $O(NM)$ operations, where $N$ is the number
of source points and $M$ is the number of target points. Interpreting~\eqref{eqn:qbxlocal} as the local
expansion of a potential due to a finite set of source charges in space suggests
that such expansion could be amenable to acceleration with the Fast Multipole
Method (FMM).

Recall that the original version of the FMM (e.g.~\cite{greengard:1987:fmm}) is designed to evaluate \emph{point potentials}, which are
potentials of the form
\begin{equation}%
  \label{eqn:point-potential}
  \ptpot(x_i) \coloneqq \sum_{j=1}^N w_j \mathcal{G}(x_i, y_j)
  \quad
  (i=1,\dots,M).
\end{equation}
Here, $\{x_i\}_{i=1}^M \subseteq \mathbb{R}^3$ is the set of target points, and $\{y_j\}_{j=1}^N
\subseteq \mathbb{R}^3$ is the set of source points with the weights
$\{w_j\}_{j=1}^N \subseteq \mathbb{R}$. We shall call FMMs
which evaluate these types of summations `point FMMs' in the remainder of this paper. In
contrast, FMMs directed toward evaluation of QBX expansions for global QBX
(\cite{rachh:2017:qbx-fmm,rahimian:2017:qbkix,gigaqbx2d}) can be described as a
modification of the FMM where QBX centers are treated as a special kind of
target, at which the FMM \emph{forms a local expansion} rather than evaluating
a point potential.

The capability of forming a local expansion of a point potential is an
algorithmic component of the point FMM used in the far field
approximation. Because the algorithmic machinery is already present in the point
FMM, it would be appear to be a fairly natural step to modify the point FMM to
form local expansions at the QBX centers. The first published version of the QBX
FMM~\cite{rachh:2017:qbx-fmm} operationally follows the point FMM algorithm, but
replaces the point evaluations at the QBX centers with the formation of a local
expansion. In particular, this allows it to reuse the intermediate local
expansions formed by the FMM for purposes of QBX evaluation.

Unfortunately, for a given FMM expansion order, the accuracy attained by the
algorithm in~\cite{rachh:2017:qbx-fmm} is generically lower than what would be
expected of a point FMM\@. Specifically, one does not observe a purely additive
error of magnitude proportional to $c^{\pfmm+1}$, where $c$ is a
convergence factor (e.g.\ $c=1/2$ for the Laplace FMM in
two dimensions~\cite{petersen:1995:fmm-error-est} or $c=3/4$ for the Laplace FMM
in three dimensions~\cite{petersen_error_1995}) and $\pfmm$ is the
approximation order used. The reason for this loss of accuracy is discussed in detail
in~\cite{gigaqbx2d}. In short, for accurate evaluation of expansions of the
form~\eqref{eqn:qbxlocal}, \emph{all} local coefficients need to be adequately
approximated. This entails the ability not only to approximate a point potential
but also \emph{its derivatives} to a certain order of accuracy. The point FMM
was not designed with the goal of providing accuracy estimates for this
evaluation pattern. The procedure suggested in~\cite{rachh:2017:qbx-fmm} is to
set the FMM order to $\pfmm' = \pfmm + \padd$, where $\pfmm$ is the FMM order
required for the point FMM to achieve a specified tolerance, and $\padd > 0$ is
an empirically determined quantity that depends on $\pfmm$ and the accuracy
tolerance. This strategy works in practice, although it comes with some
disadvantages. First, higher order multipole and local expansions are expensive
and more difficult to implement stably~\cite{gimbutas_fast_2009}.  Second,
error estimates covering this use are, to the best of our knowledge, not
available.

In~\cite{gigaqbx2d}, an extension to the QBX FMM of~\cite{rachh:2017:qbx-fmm}
was developed to provide accuracy guarantees similar to the point FMM. This was
done starting with the analytical result that QBX disks act like `targets with
extent.' What this suggests in practice is that the `near field' of a QBX disk
should be redefined to be proportional to the size of the disk, so that the
field in a region nearby each disk is evaluated directly. The tree that is built
over the computational domain needs to be aware of this change.

Perhaps the conceptually simplest scheme that matches the accuracy of the point
FMM is to enforce that a QBX disk (in the two dimensional context, or ball in
three dimensions) must be contained entirely inside the box that owns the disk,
so that any QBX disk that cannot fit in a child box remains in the parent
box. However, enforcing that a QBX disk must fit entirely inside a box is
computationally expensive, since disks may be suspended at `high' levels of the
tree (near the root) leading to direct interactions with large parts of the
geometry. To reduce the cost associated with suspending QBX disks at high
levels, the algorithm in~\cite{gigaqbx2d} allows QBX disks to protrude beyond
their boxes by a fixed multiple of the box size. This allows them to settle
lower down the tree (away from the root) at a level where the box size is
commensurate with their diameter. This modification, which was termed a
\emph{target confinement rule}, retains the linear scaling of the FMM under mild
assumptions on the geometry, while also permitting for control over the error
introduced by acceleration.

The scheme described in this paper is a generalization and enhancement of the
scheme described in~\cite{gigaqbx2d}, which we term the `\algbrand\ FMM', for
`Geometric Global Accelerated QBX'. In Section~\ref{sec:accuracy-control}, we
describe a framework for ensuring accuracy in the application of QBX with no
acceleration over arbitrary smooth
geometries. Error estimates for FMM translations are derived in
Section~\ref{sec:error-estimates}. The algorithm is described in
Section~\ref{sec:algorithm}. We close with numerical experiments in
Section~\ref{sec:results}.

\section{Accuracy Control for QBX on Surfaces}%
\label{sec:accuracy-control}

\begin{DIFnomarkup}
  \newcommand{\plotgeometry}[2]{
    \draw[samples=100] plot ({#1+finex(\x)}, {finey(\x)}) node[black,right] {};
    \draw[samples=100] plot ({#1+coarsex(\x)}, {coarsey(\x)}) node[black,right] {#2};
    \foreach \t in {-0.9, -0.675, -0.45, 0, 0.9} {
      \node[fill, coord, black] at ({#1+finex(\t)}, {finey(\t)}) {};
    }
  }

  \def\coarsept#1#2{{#1+coarsex(#2)},{coarsey(#2)}}
  \def\finept#1#2{{#1+finex(#2)},{finey(#2)}}

  \def\coarsectr#1#2#3{{#1+coarsex(#2)+#3*coarsenx(#2)},{coarsey(#2)+#3*coarseny(#2)}}

  \def\qbxctr{-0.5}
  \def\initialradius{0.4}
  \def\finalradius{0.175}

  \def\geometryspacing{0.5}

  \def\geometrywidth{coarsex(1)-coarsex(-1)}
  \def\lshift{(-\geometrywidth-\geometryspacing/2)}
  \def\rshift{\geometryspacing/2}

  \def\radiusfudgefactor{8*0.622}

  \begin{figure}
  \centering
  \begin{tikzpicture}[scale=2.5,domain=-1:1]
    \plotgeometry{\lshift}{$\Gamma$};
        \plotgeometry{\rshift}{$\Gamma^\text{stage-1}$};

    \foreach \t in {-0.9, -0.2, 0.9} {
      \node[fill, coord] at (\coarsept{\lshift}{\t}) {};
    }

    \foreach \t in {-0.9, -0.55, -0.2, 0.9} {
      \node[fill, coord] at (\coarsept{\rshift}{\t}) {};
    }

    \coordinate[draw, coord, qbxcolor] (src) at (\coarsept{\lshift}{\qbxctr});
    \coordinate[draw, coord, qbxcolor] (ctr)
      at (\coarsectr{\lshift}{\qbxctr}{\initialradius});
    \draw[dashed, ->, qbxcolor] (src) -- (ctr);
    \node[draw, qbxcolor, circle,
      minimum size=\radiusfudgefactor*\initialradius cm] at (ctr) {};
    \draw[qbxcolor, ->] (-1.25, 0.45) node[right] {\small (source point)} -- (src);
    \draw[qbxcolor, ->] (-1.25, 0.6) node[right] {\small (QBX center)} -- (ctr);

    \draw[badcolor, ->]
      (-1, 0.9) node[above,badcolor]  {interfering geometry}
      to[bend left] (\finept{\lshift}{-0.75});

    \draw[flagcolor] ({\lshift+0.15},0) rectangle ({\lshift+coarsex(-0.2)-0.05},0.1);
    \draw[->, flagcolor] (-1.2, 0.3)
      node[right] {flagged for bisection} -- ({\lshift+coarsex(-0.2)-0.05}, 0.05);

    \coordinate[coord] (s) at (\coarsept{\lshift}{-0.55});
    \coordinate[coord] (tl) at (\coarsept{\rshift}{-0.725});
    \coordinate[coord] (tr) at (\coarsept{\rshift}{-0.375});
    \draw[gray, dashed, ->] (s) to[bend right=25] (tl);
    \draw[gray, dashed, ->] (s) to[bend right=25] (tr);

    \coordinate[draw, coord, qbxcolor] (src) at (\coarsept{\rshift}{\qbxctr});
    \coordinate[draw, coord, qbxcolor] (ctr)
      at (\coarsectr{\rshift}{\qbxctr}{\finalradius});
    \node[draw, qbxcolor, circle,
      minimum size=\radiusfudgefactor*\finalradius cm] at (ctr) {};
    \draw[dashed, ->, qbxcolor] (src) -- (ctr);
    \node[draw, qbxcolor, circle,
      minimum size=\radiusfudgefactor*\finalradius cm, outer sep=0pt]
      (qbx2) at (ctr) {};

    \draw[goodcolor, ->]
      (1, 0.6) node[right] {no interference} to[bend right] (qbx2.north);
  \end{tikzpicture}
  \caption{An illustration of stage-1 refinement for a two dimensional geometry.
    Element (panel) boundaries are show with black dots.
    The element associated with the QBX disk shown is bisected due to the presence
    of interfering geometry.}%
  \label{fig:stage1-refinement}
  \vspace{0.5cm}
  \begin{tikzpicture}[scale=2.5,domain=-1:1]
    \plotgeometry{\lshift}{$\Gamma^\text{stage-1}$};
    \plotgeometry{\rshift}{$\Gamma^\text{stage-2}$};

    \foreach \t in {-0.9, -0.55, -0.2, 0.9} {
      \node[fill, coord] at (\coarsept{\lshift}{\t}) {};
    }

    \foreach \t in {-0.9, -0.55, -0.2, 0.35, 0.9} {
      \node[fill, coord] at (\coarsept{\rshift}{\t}) {};
    }

    \coordinate[draw, coord, qbxcolor] (src) at (\coarsept{\lshift}{\qbxctr});
    \coordinate[draw, coord, qbxcolor] (ctr)
    at (\coarsectr{\lshift}{\qbxctr}{\finalradius});
    \draw[dashed, ->, qbxcolor] (src) -- (ctr);
    \node[draw, qbxcolor, circle,
      minimum size=\radiusfudgefactor*\finalradius cm,outer sep=0pt]
      (qbx) at (ctr) {};

    \draw[->, badcolor] (\coarsept{\lshift}{0.35}) to[out=90,in=45] (qbx.north east);
    \node[badcolor, above] at (-1.3, 0.5) {insufficient resolution};

    \draw[flagcolor] ({\lshift+coarsex(-0.2)+0.05}, -0.14)
      rectangle ({\lshift+coarsex(0.9)-0.05},0.04);
    \draw[->, flagcolor] (-0.4, 0.8)
      node[left] {flagged for bisection}
      to[bend left] ({\lshift+coarsex(0.9)-0.05}, 0.04);

    \coordinate[coord] (s) at (\coarsept{\lshift}{0.35});
    \coordinate[coord] (tl) at (\coarsept{\rshift}{0.075});
    \coordinate[coord] (tr) at (\coarsept{\rshift}{0.625});
    \draw[gray, dashed, ->] (s) to[bend right=15] (tl);
    \draw[gray, dashed, ->] (s) to[bend right=15] (tr);

    \coordinate[draw, coord, qbxcolor] (src) at (\coarsept{\rshift}{\qbxctr});
    \coordinate[draw, coord, qbxcolor] (ctr)
      at (\coarsectr{\rshift}{\qbxctr}{\finalradius});
    \node[draw, qbxcolor, circle,
      minimum size=\radiusfudgefactor*\finalradius cm] at (ctr) {};
    \draw[dashed, ->, qbxcolor] (src) -- (ctr);
    \node[draw, qbxcolor, circle,
      minimum size=\radiusfudgefactor*\finalradius cm,outer sep=0pt]
      (qbx) at (ctr) {};

    \draw[->, goodcolor] (\coarsept{\rshift}{0.075})
      to[out=90,in=45] (qbx.north east);
    \draw[->, goodcolor] (\coarsept{\rshift}{0.625})
      to[out=90,in=45] (qbx.north east);
    \node[goodcolor, above] at (1.3, 0.55) {resolved};
  \end{tikzpicture}
  \caption{An illustration of stage-2 refinement for a two dimensional geometry,
    continuing from the previous figure. The element adjacent to
    the QBX disk shown is bisected because of insufficient quadrature
    resolution.}
  \label{fig:stage2-refinement}
  \end{figure}
\end{DIFnomarkup}


Since the cost of computational methods dealing with
three-dimensional geometries is typically far greater than that of methods
applied to two-dimensional geometries, and since that cost is directly
related to the resolution supplied, it is not surprising that careful control of
resolution and accuracy plays an important role in maintaining efficiency.
For QBX, two related conditions must be satisfied to ensure
accurate evaluation of the layer potential at any point in $\mathbb{R}^3$. First, the
truncation and quadrature errors at the QBX centers must be adequately
controlled, even given a geometry that does not necessarily satisfy the preconditions of
Lemma~\ref{lem:qbx-truncation-3d} and
Lemma~\ref{lem:surface-quad-estimate}. Second, every target needing QBX
evaluation must be associated to a QBX center.

In this section, we describe a computational framework for establishing these
conditions. While our presentation focuses on the setting of source
\emph{surfaces} embedded in three-dimensional space, the described approach
has an immediate analog for curves embedded in two-dimensional space, permitting
the computationally unified treatment of both cases.

The prior geometry processing scheme introduced in~\cite{rachh:2017:qbx-fmm} had
the potential to cause what one might call a `chain reaction' of refinements,
where a refinement based on insufficient quadrature resolution might trigger an
element bisection, in turn moving expansion centers associated with the
bisected elements, which might trigger further resolution-based refinements, and
so on, in particular on surfaces. The main contribution of this section is a
\emph{multi-stage} approach that not only separates concerns between different
causes for refinement, but also entirely avoids unnecessary `chain reactions'
between them.


\subsection{Overview}%

We commence our discussion with an outline of a procedure for efficiently
detecting and remedying potential sources of truncation and quadrature
inaccuracy in arbitrary smooth geometries.  From an initial, user-supplied,
unstructured mesh, the process creates a set of three related, unstructured
discretizations satisfying different invariants. We term these the
`\emph{stage-1 discretization}', the `\emph{stage-2 discretization}', and the
`\emph{stage-2 quadrature discretization}'.

For concreteness, we describe these discretization in terms of triangles, with
the understanding that generalizations to other types of reference element
(e.g.\ squares) are expected to be straightforward.  The stage-1 and stage-2
discretizations are interpolatory/unisolvent, i.e.\ a unique polynomial in the
mapped polynomial space $P^{\tgtorder}\circ \mapping_\iel^{-1}$ may be
reconstructed from the nodal degrees of freedom, where $\tgtorder$ is the polynomial degree of the
`target function space' in which layer potentials are evaluated. We choose reference unit nodes
following~\cite{vioreanu_spectra_2014} and use nodal values at their mapped
counterparts as degrees of freedom for the representation of the density and the
geometry. Interpolation operators transport information, particularly on-surface
density values, through the discretizations in the following order:
\[
  \text{stage-1} \quad\to \quad\text{stage-2} \quad\to\quad \text{stage-2 quadrature}.
\]

Shortly, we will summarize the primary features of these discretizations and the
algorithms used to obtain them. Afterwards, the remainder of this section
supplies detailed analysis and algorithms, particularly for stage-1 refinement
and stage-2 refinement (and additionally target association).

A key mechanism for maintaining scalability for algorithms in this section which
require examination of non-local portions of the geometry is the \emph{area
  query}, introduced in~\cite{rachh:2017:qbx-fmm}. Briefly, given a spatial
partitioning of the geometry into an octree and, for each box in the octree, a
stored list of adjacent, equal-or-larger \emph{peer boxes}, an area query can
efficiently find the set of leaf boxes intersecting a cubic region $\{\pt x\in
\mathbb{R}^3: \lvert \pt x-\pt c\rvert_\infty \le r\}$, for given $\pt c$ and
$r$. For details, we refer to Section~\ref{sec:area-query-alg} in the appendix.

\paragraph*{Stage-1 Discretization.} Algorithm~\ref{alg:interfering-sources} of
Section~\ref{sec:truncation-error} produces the stage-1 discretization from the user-supplied mesh. The
stage-1 discretization is a locally refined mesh fitting the geometry
description which ensures that~\begin{inparaenum}[(a)]
  \item sufficient resolution to represent the density and the geometry
  is available, and that~
  \item the assumptions of Lemma~\ref{lem:qbx-truncation-3d} are satisfied,
  i.e.\ specifically that the expansion balls of~\eqref{eq:expansion-centers}
  are undisturbed by quadrature sources (cf.~Section~\ref{sec:truncation-error}).
\end{inparaenum}
Expansion radii $r(x)$ are chosen proportional to a resolution measure of the
stage-1 discretization (cf.\ Section~\ref{sec:quantify-quad-res}).  Thus,
locally bisecting triangular elements~\begin{tikzpicture}
  [scale=0.15,y={(0.5cm,0.866cm)}] 
  \draw ++(0,0)--++(1,0)--++(-1,1)--cycle;
  \draw ++(1,0)--++(1,0)--++(-1,1)--cycle;
  \draw ++(0,1)--++(1,0)--++(-1,1)--cycle;
\end{tikzpicture}
associated with expansion balls disturbed by other geometry will shrink the
associated expansion ball, helping to ensure that, potentially after a number of
refinement cycles, the expansion ball clears the interfering
geometry. Figure~\ref{fig:stage1-refinement} gives an illustration of this
bisection process on a portion of a two-dimensional geometry.

The stage-1 discretization also incorporates a novel `\emph{scaled-curvature
criterion}' to control for truncation error based on an empirically effective
heuristic involving the local curvature of the mesh elements. See
Section~\ref{sec:scaled-curvature}.

\paragraph*{Stage-2 Discretization.} The stage-2 discretization is generated using
Algorithm~\ref{alg:ensure-quad-res} of Section~\ref{sec:accurate-quadrature}, starting with the stage-1 discretization.
The role of the stage-2 discretization is to ensure that enough quadrature
resolution is available to satisfy the resolution requirement implied by the
estimate~\eqref{eqn:quadrature-est} when applied between close elements,
i.e.~that the quadrature contribution to the approximation of the layer
potential is asymptotically as accurate from nearby elements as it is from the
element that spawns the QBX center. As with the stage-1 discretization, this
discretization is obtained through iterative bisection of offending source
elements. As an illustration of the potential issues that this discretization
controls for, consider the two-dimensional geometric situation depicted in
Figure~\ref{fig:stage2-refinement}. In this figure, the situation illustrated on
the left leads to inaccuracies as the contribution from the large source element
is not adequately resolved relative to the size of the target QBX disk, and
bisection suffices to ensure adequate resolution.

\paragraph*{Stage-2 Quadrature Discretization.} The stage-2 quadrature discretization
results from oversampling (i.e., increasing the order) of the source quadrature
nodes of the stage-2 discretization. (Thus, the stage-2 quadrature
discretization shares the same mesh as the stage-2 discretization.) The stage-2
quadrature discretization is optimized for the highest possible quadrature order
achievable at a given node count, to control the quadrature error in
Lemma~\ref{lem:surface-quad-estimate}, at the expense of unisolvence. In three
dimensions, our implementation uses quadrature nodes and weights for the
triangle based on~\cite{xiao_numerical_2010}.

\paragraph*{Target Association.} Lastly, we require a tool to compute a mapping from
targets needing QBX evaluation to QBX centers. Algorithm~\ref{alg:target-assoc}
of Section~\ref{sec:target-assoc} provides this capability. Compared with the
similar target association algorithm in~\cite{rachh:2017:qbx-fmm}, this
algorithm presents a simplified procedure for locating sources or QBX centers
close to a given target, at the expense of performing two area queries instead
of one.

\subsection{Quantifying Quadrature Resolution on Surfaces}%
\label{sec:quantify-quad-res}
At the core of our accuracy control mechanism lies a measurement of quadrature
resolution in the underlying high-order quadrature used to drive QBX\@. In our
case, these are quadrature rules based on~\cite{xiao_numerical_2010}. To
accomplish this measurement, we define a modified element mapping
$\tilde\mapping_\iel:\tilde E \to \mathbb R^3$, where $\tilde E$ is the
`bi-unit' equilateral triangle with vertices
\[
  v_1=\begin{bmatrix}
    -1\\ -1/\sqrt3
  \end{bmatrix},\quad
  v_2=\begin{bmatrix}
    1\\ -1/\sqrt3
  \end{bmatrix},\quad
  v_3=\begin{bmatrix}
    0\\ 2/\sqrt3
  \end{bmatrix}
\]
serving as the modified reference element.
We define a function
\begin{equation}
  \eta_\iel(x)\coloneqq2\sigma_1(\tilde \mapping_\iel'(\tilde \mapping_\iel^{-1}(x)))
  \quad
  \text{for $x\in\Gamma_\iel$},
  \label{eq:stretch-factor}
\end{equation}
where $\sigma_1(A)$ denotes the largest singular value of a matrix $A$.
The factor of two normalizes out the edge length of $\tilde E$.
$\eta_\iel(x)$ computes an approximate local `stretch factor' of the mapping
$\tilde \mapping_\iel$ at the point $x$.
Since $\eta_\iel$ may be discontinuous between adjacent elements, it is only unambiguously
defined when the point $x$ does not lie on the boundary of $\Gamma_\iel$,
necessitating the subscript $\iel$ to avoid ambiguity.
$\eta_\iel$ can serve as an analog of the `speed' of the
one-dimensional parametrization of a curve segment. It is crucial that
$\tilde E$ be equilateral to ensure that $\eta_\iel$ measures resolution
independently of vertex ordering. We further define
\[
  \eta_\iel\coloneqq\max_{x\in\Gamma_\iel} \eta_\iel(x)
\]
as a per-element maximum of the corresponding per-source-point function.

This resolution measure provides the basis for our choice of the expansion radii
\begin{equation}
  r_\iel\coloneqq\frac12 \etastg1\iel.
  \label{eq:expansion-radii}
\end{equation}
The quantity $\etastg1\iel$ is simply $\eta_\iel$ of~\eqref{eq:stretch-factor}
computed in reference to the stage-1 discretization, defined below.
Allowing a rough analogy between the `panel length' $h_\iel$ of~\cite{rachh:2017:qbx-fmm}
and $\eta_k$ makes the choices of expansion radii of~\cite{rachh:2017:qbx-fmm}
coincide with ours.

\subsection{Stage-1 Refinement: Managing Truncation Error}%
\label{sec:truncation-error}
Lemma~\ref{lem:qbx-truncation-3d} requires that the expansion ball be clear of
source geometry except for the target point. For smooth, non-self-intersecting
geometries, our method ensures that this condition is satisfied without explicit
user involvement, through an approach analogous to that in~\cite{rachh:2017:qbx-fmm}.
Algorithm~\ref{alg:interfering-sources} describes the procedure.

Algorithm~\ref{alg:interfering-sources} operates by using area queries to
find all source geometry that protrudes into the QBX expansion balls
(cf.\ Section~\ref{sec:approx-truncation}) and marking the elements
that spawned the obstructed expansion balls for bisection. Bisection will
lead the expansion radius~\eqref{eq:expansion-radii} to shrink by
way of a reduction of $\eta_\iel$, both of which will drop by a factor of 2 as a
result of bisection. This is repeated until no more interfering geometry is
found. To prevent the source point that spawned the center from being found and
causing refinement, we reduce the size of the queried area by a factor of
$\disturbtol$. In practice, we choose $\disturbtol=0.025$.
The discretization appears to be fairly insensitive to the choice
of this parameter, which is plausible given our
chosen quadrature margins (cf.\ Section~\ref{sec:accurate-quadrature}). Values
as large as 0.2 empirically cause little or no loss in accuracy.

\begin{algbreakable}{Bisect source elements whose expansion balls encounter
    interfering source geometry}%
  \label{alg:interfering-sources}
  \begin{algorithmic}
    \REQUIRE{The geometry discretized as a set of targets, sources, and
      expansion centers.}
    \ENSURE{By repeated bisection that the expansion radii $r_\iel$ are
      sufficiently small that $\closedball(r_\iel, \ctr)\cap
      \Gamma=\{x_\ictr\}$ for $x_\ictr\in\Gamma_\iel$ (as
      $\disturbtol \to 0^+$).}
    \vspace{2ex}
    \REPEAT{}
      \STATE{Create an octree on the computational domain containing
        all sources, expansion balls, and targets.}

      \FORALL{expansion balls $\closedball((1-\disturbtol)r_\iel, \ctr)$}
        \STATE{Perform an area query of radius $r_\iel$ centered at $\ctr$.}
        \IF{the query returned a source point $s$ such that $\norm{\ctr - s} < (1-\disturbtol)r_\iel$}
          \STATE{\textbf{Mark} the element containing $x_\ictr$ for bisection.}
        \ENDIF{}
      \ENDFOR{}
      \IF{elements were marked for bisection}
        \STATE{\textbf{Bisect} the marked elements.}
      \ENDIF{}
    \UNTIL{no elements were marked for bisection}
  \end{algorithmic}
\end{algbreakable}

\begin{figure}%
  \begin{minipage}[t]{0.48\textwidth}%
    \centering
    \includegraphics[width=0.8\textwidth]{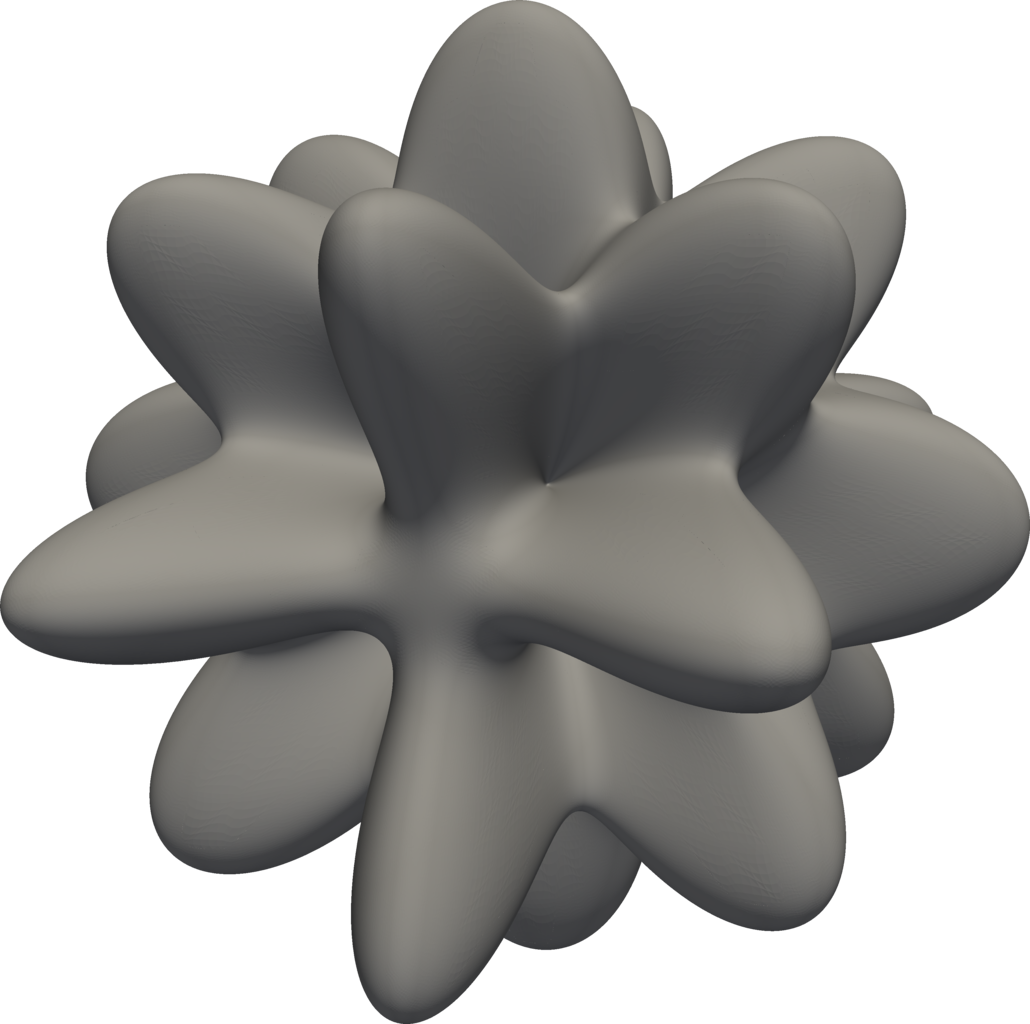}
    \caption{%
      The `urchin' test geometry $\gamma_8$ that we use
      for many computational experiments in this paper.
      See~\eqref{eq:urchin-warping} for the
      warping function used to obtain $\gamma_8$.
      The geometry is represented by an unstructured triangular
      mesh.
      Starting with an icosahedron, the high-order triangular elements are
      repeatedly warped and adaptively bisected to resolve the element mapping
      functions $\mapping_\iel$.  This resolved geometry is then processed
      according to Section~\ref{sec:accuracy-control}.
    }%
    \label{fig:urchin-8}
  \end{minipage}
  \hfill
  \begin{minipage}[t]{0.48\textwidth}%
    \centering
    \includegraphics[width=\textwidth]{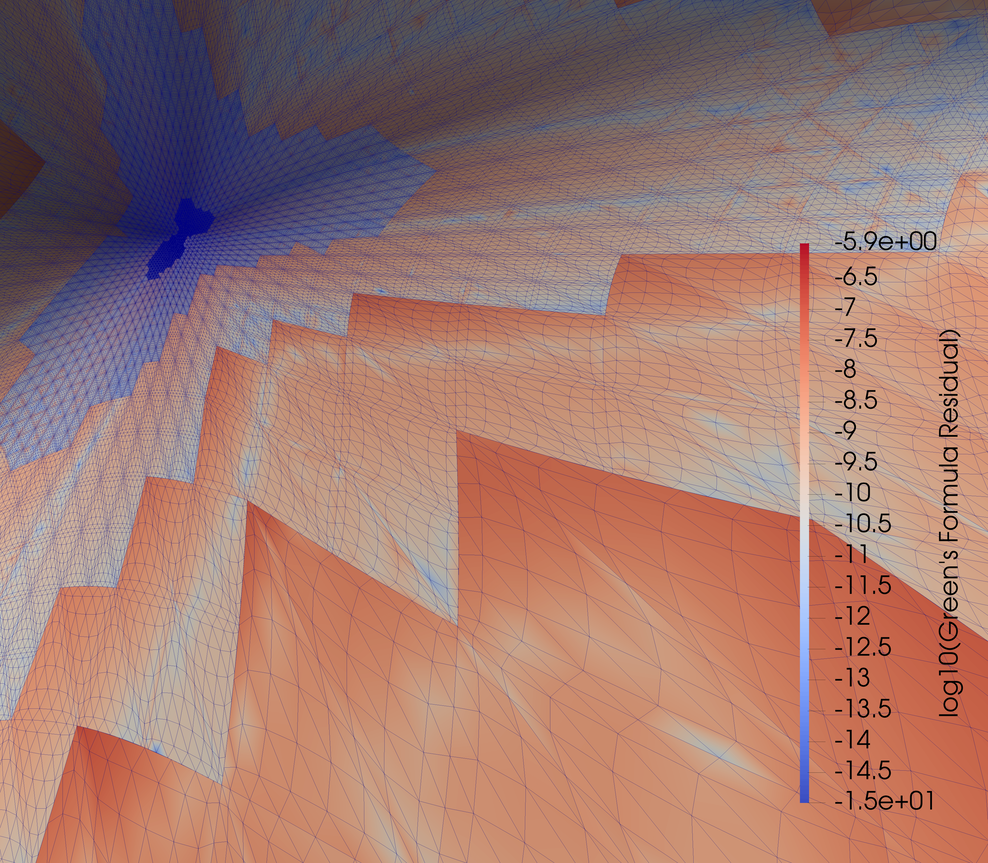}
    \caption{%
      Five levels of scaled-curvature-guided refinement of the stage-1 mesh, shown on a small
      `trough' part of $\gamma_8$.
      See Section~\ref{sec:truncation-error} for details of the refinement method.
      The coloring shows the base-10 logarithm of the residual in Green's
      formula $\mathcal S(\partial_n u)-\mathcal D(u)=u/2$ (i.e., roughly, the
      number of accurate digits). `Red' indicates largest residual,
      corresponding to around six accurate digits.
    }%
    \label{fig:refinement-levels}
  \end{minipage}
\end{figure}

\subsubsection{Truncation Error and `Scaled-Curvature'}%
\label{sec:scaled-curvature}

The realization that interference from nearby geometry, through
derivatives of the surface parametrization, contributes to the degradation in
the truncation error motivates a `localized' criterion that controls the `amount
of curvature' on each element of $\Gamma$.

The motivation for this may be most clearly explained by considering QBX in the
two-dimensional case. Consider a smooth closed non self-intersecting curve $\Gamma
\subseteq \mathbb{C}$ with arc length parametrization $w: [0,L] \to \Gamma$ such
that $\closedball(0, r) \cap \Gamma = \{ z \}$ (Figure~\ref{fig:single-layer-2d}). The single-layer potential
evaluated at $z$ for a density function $\sigma: \Gamma \to \mathbb{R}$ takes the form
\[ S \sigma (z) = - \frac{1}{2 \pi} \int_0^L \sigma(w(t)) \log {\lvert w(t) - z \rvert} \, dt. \]
We use the complex-valued logarithm, which satisfies $\Re \log y = \log |y|$ for all $|y| > 0$, to
rewrite the above as
\[
S\sigma (z)
= - \frac{1}{2 \pi} \Re \int_0^L \sigma(w(t)) \log \left( 1 - \frac{z}{w(t)} \right) \, dt
+ S \sigma(0).
\]
By expanding the kernel $\log(1 - z/w(t))$ in a Taylor series, we obtain the
expression for the truncation error in an $n$-th order QBX expansion centered at the
origin, which is:
\[ e_n(z) = - \frac{1}{2 \pi} \Re \int_0^L \sigma(w(t)) \sum_{k=n+1}^\infty \frac{1}{k}
\left( \frac{z}{w(t)} \right)^k \, dt. \]
We integrate this expression by parts $1 \leq p \leq n$ times, assuming $\sigma \in
C^\infty(\Gamma)$.  At each step we replace the term $w^{-k}$ with $-1/((k-1)w')
\partial_t w^{-k+1}$, obtaining~\cite{epstein:2013:qbx-error-est}
\begin{equation*}
e_n(z) = - \frac{1}{2 \pi} \Re \left( (-z)^p \int_0^L
D_t^p[\sigma(w(t))]  \sum_{k=n+1}^\infty \frac{(k-p-1)!}{k!}
\left( \frac{z}{w(t)} \right)^{k-p} \, dt \right)
\end{equation*}
where the differential operator $D_t$ is defined through $D_t g = \partial_t
[g(t)/w'(t)]$. For example, the
values of $D_t^p[\sigma(w(t))]$ for $p = 1$ and $p = 2$ are
\begin{align*}
  D_t[\sigma(w(t))] &= \sigma'(w(t)) - \frac{\sigma(w(t)) w''(t)}{w'(t)^2}, \\
  D^2_t[\sigma(w(t))] &= \sigma''(w(t))-\frac{2 w''(t) \sigma'(w(t))}{w'(t)^2}-\frac{w^{(3)}(t)
   \sigma(w(t))}{w'(t)^3}+\frac{3 \sigma(w(t)) w''(t)^2}{w'(t)^4}.
\end{align*}
Via these expressions, it is clear that the higher derivatives of
the curve parametrization $w$ have some influence on the truncation error. In the expression
for the truncation error, we assume without loss of generality that $|w'(t)| =
1$. The next higher derivative $w''(t)$, whose magnitude represents the
curvature at parameter $t$, is the first derivative whose magnitude is not controlled.
However, the contribution of the term
$w''(t)$ to the truncation error may be dampened by ensuring that $r$ is chosen
to locally enforce that $|rw''(t)| \le \curvthresh$, for some constant $\curvthresh > 0$, in a
neighborhood of the target.
Many other factors that enter into the
truncation error, so this is at best a heuristic motivation.
Nevertheless, we have been able to develop this insight into a practically useful
criterion.

We realize this criterion in three dimensions as follows. Let $k_1(x)$ and
$k_2(x)$ be the principal curvatures of $\Gamma$ at $x\in\Gamma$. We require
that
\begin{equation}
  \kappa_k(x)\coloneqq\max(|k_1(x)|, |k_2(x)|) \cdot \eta_\iel(x) \le \curvthresh\quad (x\in\Gamma_k),
  \label{eq:curvature-criterion}
\end{equation}
noting that $\kappa_k(x)$ is unit-less
and invariant to scaling. A rough geometric interpretation is that the
condition stipulates that a single
source element may at most cover a certain angle of a tangent circle within
the planes of principal curvature. Since $\eta_\iel(x)$ is reduced by bisecting
elements and since $k_1(x)$ and $k_2(x)$ are independent of parametrization,
$\kappa_k(x)$ can be effectively managed through refinement by bisection.

In our computational experiments, we use
$\curvthresh=0.8$ with good success. As an example, consider the mesh
of Figure~\ref{fig:refinement-levels}, which shows a small section of the test
geometry $\gamma_8$ (shown in Figure~\ref{fig:urchin-8}, see
Section~\ref{sec:results} for a more thorough description). The mesh shown in the
figure exhibits five levels of bisection-based refinement that were triggered
by the criterion~\eqref{eq:curvature-criterion}. The surface coloring in the figure
shows a logarithmic measure of the accuracy with which QBX evaluates layer potentials
on $\Gamma$, where the color red indicates the highest levels of error
encountered, corresponding to roughly six accurate digits. We observe that each
successive level of refinement exhibits growth of the error up to roughly the
`red' level of accuracy, at which point further refinement is triggered.
At least in the scenario shown,~\eqref{eq:curvature-criterion}
exhibits remarkable sharpness and reliability.

We refer to~\eqref{eq:curvature-criterion} as the `\emph{scaled-curvature
criterion}'. Its application requires the reasonably accurate evaluation of two
derivatives of the geometry, which may not be practical in all settings---notably
when $\Gamma$ is discretized using purely affine element mappings.
Nonetheless, the availability of such a criterion yields substantial efficiency
gains in the application of the QBX in high-accuracy settings, leading us to report on its
discovery in this context. Despite strong heuristic motivation and encouraging
computational results, the evidence supporting the criterion is
empirical at this point. Furthermore, since this criterion leads to improvements of the resolution of the integrand,
the application of this criterion also leads
to reduction of the quadrature error, but at present it is not clear which of these
error reduction effects (quadrature or truncation) dominates. We leave a detailed discussion and potential proofs of
its properties for future work.

\begin{DIFnomarkup}
\begin{figure}
  \centering
  \begin{tikzpicture}[scale=2.5,domain=-1:1]
    \def\qbxctr{0}
    \def\radius{0.4}
    \draw[samples=100] plot ({gammax(\x)}, {gammay(\x)}) node[right, yshift=-12pt, xshift=-32] {$\Gamma = w([0,L])$};
    \def\gammapt#1#2{{#1+gammax(#2)},{gammay(#2)}}
    \def\gammactr#1#2#3{{#1+gammax(#2)+#3*gammanx(#2)},{gammay(#2)+#3*gammany(#2)}}
    \coordinate[draw, coord, fill] (ctr) at (\gammactr{0}{\qbxctr}{\radius});
    \node[qbxcolor, draw, circle, minimum size=8*\radius*0.622cm] at (ctr) {};
    \coordinate[draw, coord, fill] (src) at (\gammapt{0}{\qbxctr});
    \draw[<-, dashed, qbxcolor] (src) -- node[midway, above, color=black] {$r$} (ctr);
    \node[right, xshift=1pt] at (ctr) {0};
    \node[left] at (src) {$z$};
  \end{tikzpicture}
  \caption{2D QBX geometric evaluation scenario for the single-layer potential
    $\mathcal{S}\sigma$ in Section~\ref{sec:scaled-curvature}, for a segment of
    the closed curve $\Gamma$.}%
  \label{fig:single-layer-2d}
\end{figure}
\end{DIFnomarkup}

The steps in this section work together to manage truncation error under the
assumption that all coefficient integrals are computed exactly. The
resulting discretization is called the \emph{stage-1 discretization}.
\subsection{Stage-2 Refinement: Ensuring Accurate Coefficient Quadrature}%
\label{sec:accurate-quadrature}
To ensure the accurate computation of the coefficient integrals associated with
the expansion centers spawned by the stage-1 discretization, we introduce a separate
\emph{stage-2 discretization} that may, depending on some criteria, be bisected
into smaller elements than are present in the stage-1 discretization, to provide
additional resolution for the high-order quadrature underlying QBX\@. An analogous
refinement step was present in the geometry processing
in~\cite{rachh:2017:qbx-fmm}, however an important difference between that
scheme and ours is that it used only a single discretization, creating an
artificial interdependence between quadrature-based refinement and
the choice of the expansion radii, impacting the robustness of the refinement
procedure.  Our approach leaves the expansion radii fixed once the stage-1
discretization is determined, removing this unnecessary entanglement of the
two stages.

\newcommand{\stwo}{\text{stage-2}}

In this section, it will be necessary to distinguish between different
refinement iterations of the stage-2 mesh. We
refer to different refinement iterations numbered using a superscript, e.g.\ the
notation for the $k$-th element of the $n$-th iteration is
$\Gamma^{\stwo,n}_k$. Each iteration consists of $K_n$ elements, so that
\( \Gamma = \bigcup_{k=1}^{K_n} \Gamma^{\stwo,n}_k. \)
The initial iteration $\Gamma^{\stwo,1}$ is the same as the stage-1 mesh which
is the output of Algorithm~\ref{alg:interfering-sources}.

\newcommand{\quadorder}{Q}

Let a center $c$ be spawned by some target element $\Gamma^{\stwo,1}_{\iel}$.
We seek to control the quadrature error at $c$ due to a source panel
$\Gamma^{\stwo,1}_j$, with $j \neq \iel$ in general. Our primary concern in this
section is the quadrature error contribution when $\Gamma^{\stwo,1}_j$ is
`close' to $c$, i.e., the minimal Euclidean distance $d_2(\Gamma^{\stwo,1}_{j},
c)$ between the center $c$ and $\Gamma^{\stwo,1}_{j}$ is sufficiently small to
threaten accuracy. We can reexamine and further annotate the quadrature estimate
of Lemma~\ref{lem:surface-quad-estimate} to obtain a quadrature estimate for this
regime. We assume that the quadrature error essentially takes the form
\begin{equation}
  |\text{quadrature error}|
  \le
  C
  \left( \frac{h^{\stwo,1}_j}{4d_2(\Gamma^{\stwo,1}_{j}, c)} \right)^{\quadorder}
  \| \mu \|,
  \quad 1 \leq j \leq K_1
  \label{eq:quad-estimate-annotated}
\end{equation}
where $\quadorder$ is the order of accuracy of the quadrature ($\quadorder=2q+1$
in Lemma~\ref{lem:surface-quad-estimate}).  That is, the main factors governing
the error are the ratio of the source panel size $h^{\stwo,1}_j$ to the center
distance, the density norm $\| \mu \|$ (with the choice of norm depending on the quadrature rule),
and the order of quadrature accuracy $Q$. For simplicity, we may
consider $h^{\stwo,1}_{j}$ asymptotically equivalent to $\etastg{2,1}{j}$, our
quadrature resolution measure.

For concreteness, let a tolerance on the quadrature error \emph{relative to
  $\|\mu\|$} be given by $\epsilon$. We seek to ensure that, for all centers
$c$, refinement produces an iteration $n$ of the discretization such that for
all centers $c \in \{ \ctr \}_{i=1}^{N_C/2}$, we have a
quadrature order $Q$ such that
\begin{equation}
  C
  \left( \frac{h^{\stwo,n}_{j}}{4d_2(\Gamma^{\stwo,n}_{j}, c)} \right)^{\quadorder}
  \le
  C'\left( \frac{\etastg{2,n}{j}}{4d_2(\Gamma^{\stwo,n}_{j}, c)} \right)^{\quadorder}
  \le \epsilon,
  \quad
  1 \leq j \leq K_n.
  \label{eq:quad-accuracy-tolerance}
\end{equation}
The change of leading constant from $C$ to $C'$ absorbs the asymptotic factors
involved when switching from $h^{\stwo,1}_{j}$ to $\etastg{2,1}{j}$.

In the case of the `self-interaction', i.e.\ when the center $c$ was spawned by
a point on the element $\Gamma^{\stwo,1}_{\iel}$, for some $1 \leq \iel \leq
K_1$, we may combine the fact that $d_2(\Gamma^{\stwo,1}_{\iel}, c)=r_\iel$ with
our choice~\eqref{eq:expansion-radii} of $r_\iel$, to obtain
that~\eqref{eq:quad-accuracy-tolerance} implies we must assure
\begin{equation}
  \label{eqn:self-interaction-goal}
  C'
  \left( \frac{\etastg{2,1}{\iel}}{2\etastg1{\iel}} \right)^{\quadorder}
  \le \epsilon.
\end{equation}
For efficiency, we seek to avoid having to refine any element to evaluate its
self-interaction absent other constraints. This implies, temporarily assuming in
this situation that $\etastg{2,1}{\iel}=\etastg1{\iel}$, that there is precisely
one free parameter to attain~\eqref{eqn:self-interaction-goal}, the order of
accuracy of quadrature $\quadorder$.

For only the self-interaction~\eqref{eqn:self-interaction-goal}, it would
suffice to simply choose $\quadorder$ to provide the required accuracy. However,
the global nature of our algorithm compels us to use the same quadrature
resolution for \emph{all} targets, and so this simple strategy is not
necessarily sufficient on its own. Instead, for the benefit of the treatment of
the `non-self interaction' from other elements whose resolution may differ, we
choose a higher value of $\quadorder$ so that the coefficient integrals for a
hypothetical source element larger by a factor of $(4/3)$
would still attain the required level of accuracy in its
coefficient integrals. In other words, we actually choose $Q$ to assure
the stronger bound
\begin{equation}
  C'
  \left( \frac{(4/3)  \etastg{2,1}{\iel}}{2\etastg1{\iel}} \right)^{\quadorder}
  \le \epsilon.
  \label{eq:self-quad-estimate}
\end{equation}


To ensure accuracy of the non-self-interactions, say for all $1 \leq j \leq K_1$, we detect expansion centers
within a distance of $0.5\cdot (3/4)\etastg{2,1}{j}$ from $\Gamma^{\stwo,1}_{j}$ using area
queries originating from source points on each $\Gamma^{\stwo,1}_{j}$.
If any such centers are found, $\Gamma^{\stwo,1}_{j}$ is bisected.
The bisection  reduces $\etastg{2,1}{j}$ by a factor of two, producing
$\etastg{2,2}{j'} = \frac{1}{2} \etastg{2,1}{j}$ for all refined elements $1 \leq j' \leq K_2$ whose parent element is $j$.
Bisection does \emph{not} affect the placement of any
expansion centers, so that only the numerator of the bound~\eqref{eq:quad-estimate-annotated} is affected.

After a sufficient number (say $n$) of iterations, the refinement process
ensures that we always have $d(\Gamma^{\stwo,n}_j,c)\ge 0.5\cdot
(3/4)\etastg{2,n}{j}$, so that
\begin{equation*}
  C
  \left( \frac{h^{\stwo,n}_{j}}{4d_2(\Gamma^{\stwo,n}_{j}, c)} \right)^\quadorder
  \le
  C'
  \left( \frac{\etastg{2,n}{j}}{4 \cdot 0.5 \cdot (3/4) \etastg{2,n}{j}} \right)^\quadorder
  =
  C'
  \left( \frac{(4/3)\etastg{2,n}{j}}{2\etastg{2,n}{j}} \right)^{\quadorder}
  \leq \epsilon,
  \quad
  1 \leq j \leq K_n
\end{equation*}
where $\quadorder$ was chosen above so as to ensure accurate
quadrature in this case, cf.~\eqref{eq:self-quad-estimate} and the text
before it.
One particular consequence of these parameter
choices is that, at a resolution change in the stage-1 discretization
(perhaps as the result of bisection), the stage-2 refinement scheme
just described produces a `buffer zone' of stage-2-refined elements around the
stage-1 refinement fringe.

The halving of $\etastg2{}$ through bisection implies that the set of
`endangered' centers found in the current iteration will be equal to or a
superset of that found in the following iteration.  For smooth,
non-self-intersecting geometries, the associated procedure, detailed in
Algorithm~\ref{alg:ensure-quad-res}, is guaranteed to terminate.  As a last
step, the stage-2 discretization resulting from
Algorithm~\ref{alg:ensure-quad-res} is upsampled to use a sufficient number of
quadrature nodes $\pquad$ to achieve order of accuracy $\quadorder$, obtaining the \emph{stage-2
quadrature} discretization whose nodes are used as source particles for our fast
algorithm, detailed in Section~\ref{sec:algorithm}.

The objective of this contribution is
to clarify the asymptotic relationships between the geometric variables, not to provide
concrete estimates of the constants involved. As such, we give any specific
factors (such as in~\eqref{eq:expansion-radii}) merely for concreteness. We claim that
the choices described here are adequate to illustrate the behavior of the
scheme, and to obtain a practically viable method for layer potential
evaluation. However, we make no claim of optimality for the chosen parameters.
The contribution~\cite{afklinteberg:2016:quadrature-est} contains more precise
bounds suggesting that a fully quantitative understanding may be attainable.
We leave this for future work.

\begin{algbreakable}{Bisect stage-2 source elements until sufficient quadrature
  resolution is available}%
  \label{alg:ensure-quad-res}
  \begin{algorithmic}
    \REQUIRE{The stage-1 discretization has been determined in accordance with
    Section~\ref{sec:truncation-error}.}
    \ENSURE{The quadrature accuracy condition~\eqref{eq:quad-accuracy-tolerance}
      holds for all centers and all source elements $\Gamma_k^\stwo$.}
    \vspace{2ex}
    \STATE{Initialize the stage-2 discretization $\Gamma^\stwo$ to be identical to the
      stage-1 discretization.}
    \REPEAT{}
      \STATE{Create an octree on the computational domain containing
        all source points in the stage-2 discretization and expansion centers.}
      \FORALL{stage-2 elements $\Gamma_k^\stwo$}
      \FORALL{source points $x_i^\stwo\in\Gamma_\iel^\stwo$}
        \STATE{Perform an area query of radius $0.5 \cdot (3/4) \etastg2\iel$ centered at $x_i^\stwo$.}
        \IF{the query returned an expansion center $c$ such that $\norm{c- x_i^\stwo} \leq 0.5 \cdot (3/4)\etastg2\iel$}
          \STATE{\textbf{Mark} the element $\Gamma_k^\stwo$ for bisection.}
        \ENDIF{}
      \ENDFOR{}
      \ENDFOR{}
      \IF{elements were marked for bisection}
        \STATE{\textbf{Bisect} the marked elements.}
      \ENDIF{}
    \UNTIL{no elements were marked for bisection}
  \end{algorithmic}
\end{algbreakable}


\subsection{Associating Targets with QBX Centers}%
\label{sec:target-assoc}
The computed set of quadrature discretizations ensures that the QBX expansions
at the chosen set of centers can be computed accurately (ignoring error from acceleration). A final issue to be
solved by geometry processing is to determine, for each target point, whether
evaluation of the potential with QBX is needed or whether unmodified smooth
high-order quadrature suffices.  Algorithm~\ref{alg:target-assoc} describes a
procedure for associating targets to QBX centers. The algorithm proceeds in two
stages, which consist of \emph{identifying} endangered targets and
\emph{associating} targets to an expansion center using area queries. The
algorithm produces a mapping from targets to associated centers, and it also
flags endangered targets that could not be associated to any centers.

In the first stage, targets that require QBX evaluation are determined based on
their proximity to endangering source
particles. Section~\ref{sec:accurate-quadrature} implies that a `danger
zone' of radius $\rdanger{s} = \eta_\iel/2$ exists around each source particle
$s\in\Gamma_\iel$. Using area queries around each source point of size
$\rdanger{s}$, every target that some source endangers can be identified
efficiently.

In the second stage, an area query around each expansion center
having the same radius as the expansion ball is
used to associate endangered targets to centers. In practice, because the expansion
balls leave gaps in the coverage of the source danger zone, we have found it
useful to allow a target to be matched
to a center $c$ if it is within a ball of radius $r_c(1 +
\tgtassoctol)$, where $\tgtassoctol$ is some tolerance value and $r_c$ is the expansion ball radius.
Although such usage is not necessarily governed by theoretical guarantees,
experience suggests that a small value of $\tgtassoctol$ decreases the chance of
having unassociated endangered targets without appearing to have an adverse
impact on accuracy. If increasing $\tgtassoctol$ still leads to unassociated
targets, another available strategy is to refine near the targets which could
not be associated, or to introduce additional QBX centers.

A natural extension of this algorithm is to include a \emph{side preference} for
each target. This amounts to associating a target only if the side of the center
matches the desired side (interior/exterior) for the target. The need for this
arises when performing on-surface evaluation for layer potentials where the limiting
value depends on the direction of approach. See~\cite{klockner:2013:qbx} for
further discussion.

The algorithm we use in our implementation to obtain the results
Section~\ref{sec:results} is mildly more complicated than
Algorithm~\ref{alg:target-assoc}, owing to being designed to run in
parallel. The main difference is that in our implementation the area query takes
the `point of view' of the targets, rather than the sources or the centers. This
potentially can result in better load distribution when large numbers
of targets are clustered in one part of the geometry. Nevertheless, the output of
our implemented algorithm is functionally identical to output of the algorithm
in this section.

\begin{algbreakable}{%
    Associate near-source/`endangered' target points with QBX
    centers}%
  \label{alg:target-assoc}
  \begin{algorithmic}
    \REQUIRE{The geometry discretized as a set of targets, sources, and
      expansion centers.}
    \REQUIRE{A target association tolerance $\tgtassoctol \geq 0$.}
    \ENSURE{Computes a partial function from targets to
      expansion centers and flags the set of targets that could not
      be associated.}

    \STATE{\vspace{2ex}Create an octree on the computational domain containing
      all sources, targets, and expansion centers.}

    \algstage{Find endangered targets}
    {%
      \FORALL{source points $s\in\Gamma$}
        \STATE{Perform an area query of radius $\rdanger{s}$ centered at $s$.}
        \FORALL{targets $t$ in boxes returned by the query}
          \IF{$\norm{t - s} \leq \rdanger{s}$}
            \STATE{\textbf{Mark} $t$ as endangered.}
          \ENDIF{}
        \ENDFOR{}
      \ENDFOR{}
    }

    \algstage{Find centers for endangered targets}
    {%
      \FORALL{expansion centers $c$}
        \STATE{Perform an area query of radius $r_c (1 + \tgtassoctol)$ centered at $c$.}
        \FORALL{endangered targets $t$ in boxes returned by the query}
          \IF{$\norm{t-c} \leq r_c(1 + \tgtassoctol)$ \AND $c$
            is the closest center to $t$ encountered so far}
            \STATE{\textbf{Associate} $t$ to $c$.}
          \ENDIF{}
        \ENDFOR{}
      \ENDFOR{}
    }

    \algstage{Flag targets that could not be associated}
    {%
      \FORALL{endangered targets $t$}
        \IF{$t$ is not associated to a center}
          \STATE{\textbf{Flag} $t$.}
        \ENDIF{}
      \ENDFOR{}
    }
  \end{algorithmic}
\end{algbreakable}

\section{Error Estimates for FMM Translations}%
\label{sec:error-estimates}

In~\cite{gigaqbx2d}, error estimates were presented for the \algbrand~FMM that
applied to the 2D Laplace kernel with complex Taylor expansions. In this
section, we present their analogs in three dimensions. We restrict our
attention to the spherical harmonic approximation of the three-dimensional Laplace
potential~\eqref{eq:laplace-fs-green}.
This section lays the groundwork for showing the conditions under which a 3D
\algbrand-style FMM for this potential can be expected to have the same
convergence factor as a 3D point FMM\@.


\subsection{Overview}

The main difference between `point' estimates for an FMM and the estimates in
this section is that the object being approximated is a local expansion rather
than a `point' value (see
Section~\ref{sec:accuracy-notion-comparison}). Nevertheless, readers familiar
with the error estimates for the point FMM will recognize a number of
similarities in this section with the error estimates from the point case.

\paragraph*{Types of Translations.} First, there is a direct
correspondence in these error estimates with the evaluation scenarios for the
point FMM\@. For instance, see~\cite[Lem.~3.2]{greengard:1988:thesis} for the
local case (cf.~Hypothesis~\ref{hyp:l2p}), \cite[Thm.~3.5.4]{greengard:1988:thesis} for the multipole case (cf.~Hypothesis~\ref{hyp:m2p}),
and~\cite[Thm.~3.5.5]{greengard:1988:thesis} for the multipole-to-local
case (cf.~Hypothesis~\ref{hyp:m2l}).

\paragraph*{`Sized' Targets.} Second, the results in this section
empirically confirm that, for purposes of FMM accuracy, the local expansion that is to be
approximated behaves much like a `sized target'. What this means is that, for a
given evaluation scenario, the accuracy that is expected is similar to the
accuracy expected if the local expansion were a set of targets for point
evaluation.

\paragraph*{Accuracy Dependence on Intermediate Expansion Order.} Last,
the results suggest that the accuracy chiefly depends on the order of the
intermediate multipole/local expansions used and not the `final' order of the
local expansion~(i.e.~the QBX order). The results also
suggest it might be possible to find an estimate independent of the final
expansion order. With regards to the dependence on the order of the intermediate
expansion, the error behavior mirrors the original FMM.

\subsection{Analytical Preliminaries}

Local expansions have already been introduced in
Section~\ref{sec:qbx-background}. Recall that the $p$-th order local expansion
$L_p$ due to a source $s \in \mathbb{R}^3$ centered at $c \in \mathbb{R}^3$,
with coefficients $\coeffs{L}$ given by~\eqref{eqn:local-expansion-coeff} and
evaluated at a target $t \in \mathbb{R}^3$ takes the form
\[ L_p(t) = \sum_{n=0}^p \sum_{m=-n}^n L^m_n \norm{t-c}^n Y^m_n(\theta_{t-c}, \phi_{t-c}). \]
This expansion converges as long as $\norm{t - c} < \norm{s - c}$.

A \emph{multipole expansion} due to a source $s \in \mathbb{R}^3$ with center $c
\in \mathbb{R}^3$ is defined via the coefficients $\coeffs{M}$ given by
\begin{equation}
  \label{eqn:mpole-expansion-coeff}
   M^m_n \coloneqq \frac{1}{2n+1} \norm{s - c}^n Y^m_n(\theta_{s-c}, \phi_{s-c}).
\end{equation}
The expression $M_p(t)$ for a $p$-th order multipole expansion evaluated at $t \in \mathbb{R}^3$ takes the form
\[
 M_p(t) = \sum_{n=0}^p \sum_{m=-n}^n \frac{M^m_n}{\norm{t-c}^{n+1}} Y^{-m}_n(\theta_{t-c}, \phi_{t-c}).
\]
The multipole expansion converges for $\norm{t - c} > \norm{s - c}$.

\emph{Translation operators} allow for the shifting of centers of expansions, or
the conversion of multipole expansions to local expansions. If the coefficients
of the original expansion are notated as $\coeffs{B}$, the translation operator
amounts to a linear transformation $\coeffs{B} \mapsto \coeffs{(B')}$ to new
coefficients $\coeffs{(B')}$. We will say that the expansion with
coefficients $\coeffs{B}$ has order $p$ if the coefficients $\coeffs{B}$ satisfy
$B^m_n = 0$ for $n > p$. Equivalently, in that case the $\coeffs{B}$ may be
thought of as a vector of $(p + 1)^2$~coefficients~$B_0^0, B_1^{-1}, B_1^0,
B_1^1, \ldots$.

We denote the translation of a local expansion with order $p$ from a center $c
\in \mathbb{R}^3$, to a local expansion with order $q$ at a center $c' \in
\mathbb{R}^3$, by $\coeffs{(B')} = \LToL{c}{c'}{p}{q}
\coeffs{B}$. See~\cite[Lem.~3.2]{greengard:1988:thesis} for the explicit formula
for this operator.

We similarly define the operators $\MToM{c}{c'}{p}{q}$ for
multipole-to-multipole translation and $\MToL{c}{c'}{p}{q}$ for
multipole-to-local translation. An explicit formula for both of these operators
may be found respectively in~\cite[Thm.~3.5.4]{greengard:1988:thesis}
and~\cite[Thm.~3.5.5]{greengard:1988:thesis}.

The following error estimate pertains to the truncation error of multipole and
local expansions.

\begin{proposition}[Accuracy of multipole and local expansions, based on~{\cite[Lem.~3.2.4]{greengard:1988:thesis}}]%
  \label{prop:local-mpole-accuracy}%
  Let $s, c, t \in \mathbb{R}^3$.
  \begin{enumerate}[(a)]
  \item Consider the local expansion of $\mathcal{G}(s, \cdot)$ centered at $c$ and evaluated
  at $t$, with the coefficients $\coeffs{L}$ defined as
  in~\eqref{eqn:local-expansion-coeff}. Let $r = \norm{t-c}$ and $\rho = \norm{s-c}$.
  If $r < \rho$, then
  \[
    \left|
    \mathcal{G}(s, t) - \sum_{n=0}^p \sum_{m=-n}^n L^m_n \norm{t-c}^n Y^m_n(\theta_{t-c}, \phi_{t-c})
    \right| \leq \frac{1}{4 \pi}
    \frac{1}{\rho - r} \left( \frac{r}{\rho} \right)^{p+1}.
  \]
  \item Next, consider the multipole expansion of $\mathcal{G}(s, \cdot)$ centered at $c$ and
  evaluated at $t$, with the coefficients $\coeffs{M}$ defined as
  in~\eqref{eqn:mpole-expansion-coeff}. Let $r = \norm{s-c}$ and $\rho = \norm{t-c}$.
  If $r < \rho$, then
  \[
    \left|
    \mathcal{G}(s, t) - \sum_{n=0}^p \sum_{m=-n}^n \frac{M^m_n}{\norm{t-c}^{n+1}} Y^{-m}_n(\theta_{t-c}, \phi_{t-c})
    \right| \leq \frac{1}{4 \pi}
    \frac{1}{\rho - r} \left( \frac{r}{\rho} \right)^{p+1}.
  \]
  \end{enumerate}
\end{proposition}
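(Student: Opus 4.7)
The plan is to treat both parts by a direct tail-bound argument applied to the spherical harmonic expansion \eqref{eqn:sph-harm}. For part (a), I would substitute $a = t-c$ and $b = s-c$ into \eqref{eqn:sph-harm}, which is applicable precisely because $r = \norm{a} < \norm{b} = \rho$. Noting that the coefficients $L_n^m$ of \eqref{eqn:local-expansion-coeff} are defined so that the truncated local expansion is exactly the partial sum of \eqref{eqn:sph-harm} up to degree $p$, the error is the tail
\[
  E_p = \sum_{n=p+1}^\infty \frac{1}{2n+1} \frac{r^n}{\rho^{n+1}} \sum_{m=-n}^n Y_n^m(\theta_{t-c},\phi_{t-c}) Y_n^{-m}(\theta_{s-c},\phi_{s-c}).
\]

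The key step is to control the inner sum over $m$. With the normalization of $Y_n^m$ in \eqref{eq:spherical-harmonics}, one checks that $Y_n^{-m}(\theta,\phi)=\overline{Y_n^m(\theta,\phi)}$. The spherical harmonic addition theorem then gives
\[
  \sum_{m=-n}^n Y_n^m(\theta_{t-c},\phi_{t-c}) Y_n^{-m}(\theta_{s-c},\phi_{s-c}) = \frac{2n+1}{4\pi}\, P_n(\cos\gamma),
\]
where $\gamma$ is the angle between $t-c$ and $s-c$ and $P_n$ is the Legendre polynomial. Using the classical bound $|P_n(\cos\gamma)| \le 1$, the $(2n+1)$ factors cancel, and the tail reduces to
\[
  |E_p| \le \frac{1}{4\pi\rho}\sum_{n=p+1}^\infty\left(\frac{r}{\rho}\right)^n.
\]
Summing the geometric series (valid since $r/\rho < 1$) yields $|E_p| \le \frac{1}{4\pi}\frac{1}{\rho-r}\left(r/\rho\right)^{p+1}$, which is the stated bound.

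For part (b), the argument is essentially the same identity read in the complementary regime: set $a = s-c$ and $b = t-c$ in \eqref{eqn:sph-harm}, so that now $r = \norm{s-c} < \norm{t-c} = \rho$. The multipole coefficients $M_n^m$ of \eqref{eqn:mpole-expansion-coeff} were defined so that the truncated multipole expansion is the partial sum of this expansion. The error tail has the same structure as in part (a), and the identical combination of the addition theorem, $|P_n|\le 1$, and a geometric series produces the same bound $\frac{1}{4\pi(\rho-r)}(r/\rho)^{p+1}$.

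The only non-routine ingredient is making sure the conventions in \eqref{eq:spherical-harmonics} are consistent with the form of the addition theorem used, i.e.\ that $Y_n^{-m}=\overline{Y_n^m}$ under the present normalization (no Condon--Shortley phase in \eqref{eq:spherical-harmonics}); this is a short verification from the definition. Once that is settled, everything else is a geometric-series computation, and no additional machinery beyond \eqref{eqn:sph-harm}, \eqref{eqn:local-expansion-coeff}, and \eqref{eqn:mpole-expansion-coeff} is needed.
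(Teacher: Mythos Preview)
Your proposal is correct and is precisely the classical argument: the paper does not supply its own proof of this proposition but merely cites Greengard's thesis, and the proof there proceeds exactly as you describe---identify the truncation error as the tail of~\eqref{eqn:sph-harm}, collapse the inner $m$-sum via the addition theorem to $\tfrac{2n+1}{4\pi}P_n(\cos\gamma)$, invoke $|P_n|\le 1$, and sum the resulting geometric series. Your check that $Y_n^{-m}=\overline{Y_n^m}$ under the normalization~\eqref{eq:spherical-harmonics} is the only point requiring care, and you have handled it.
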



\subsection{Accuracy of \algbrand~FMM Translations}
\label{sec:accuracy-notion-comparison}

\begin{figure}
  \def\scaling{0.8}

  \begin{minipage}[t]{0.48\linewidth}
    \begin{tikzpicture}[scale=\scaling, z={(-0.09,-0.09)}]
      \def\tgtR{2.5}
      \def\tgtX{0}
      \def\tgtY{0}
      \def\tgtZ{0}
      \DrawSphere[localcolor]{\tgtX}{\tgtY}{\tgtZ}{\tgtR}

      \node [circle, minimum size=2*\scaling*\tgtR cm](tgt) at (xyz cs: x=\tgtX, y=\tgtY) {};
      \node [below right, length] at (tgt.south east) {$\closedball(0, R)$};

      \def\qbxR{0.9}
      \def\qbxX{0}
      \def\qbxY{-1.35}
      \def\qbxZ{0}
      \DrawSphere[qbxcolor]{\qbxX}{\qbxY}{\qbxZ}{\qbxR}

      \node [circle, minimum size=2*\scaling*\qbxR cm](qbx) at (xyz cs: x=\qbxX, y=\qbxY) {};

      \def\srcR{1}
      \def\srcX{-5}
      \def\srcY{0}
      \def\srcZ{0}
      \DrawSphere[srccolor]{\srcX}{\srcY}{\srcZ}{\srcR}

      \node [circle, minimum size=2*\scaling*\srcR cm](src) at (xyz cs: x=\srcX, y=\srcY) {};
      \node [below left, length] at (src.south west) {$\closedball(c, r)$};

      \coordinate[mark coordinate] (0) at (xyz cs: x=\tgtX, y=\tgtY);
      \draw (0) node[above right, length]{$0$};
      \coordinate[mark coordinate] (c) at (xyz cs: x=\srcX, y=\srcY);
      \draw (c) node[above right, length]{$c$};
      \coordinate[mark coordinate] (cp) at (xyz cs: x=\qbxX, y=\qbxY);
      \draw (cp) node[above right, length]{$c'$};

      \draw[<->, length] (c) -- +(\tgtX-\srcX-\tgtR,0)
        node[above, midway]{$\rho$};

      \draw[<->, length] (c) -- +(90:\srcR) node[right, midway]{$r$};
      \draw[<->, length] (0) -- +(90:\tgtR) node[right, midway]{$R$};

      \draw (230:3cm) node[color=qbxcolor] {\text{local}}
        edge[color=qbxcolor, out=0, in=200, ->] (qbx.south west);

      \draw (160:4cm) node[color=srccolor] {\text{multipole}}
        edge[color=srccolor, out=270, in=45, ->] (src.north east);
    \end{tikzpicture}%
    \caption{Geometric depiction of the use of a multipole expansion to
      approximate the local expansion of a potential. The multipole expansion is
      formed at the center $\pt{c}$ and translated to $\pt{c'}$. This provides
      the geometric setting for the situations described
      in~Sections~\ref{sec:mpole-accuracy}~and~\ref{sec:m2l-accuracy}.}%
    \label{fig:mpole}
  \end{minipage}
  \hfill
  \begin{minipage}[t]{0.48\linewidth}
    \begin{tikzpicture}[scale=\scaling, z={(-0.09,-0.09)}]
      \def\tgtR{2.5}
      \def\tgtX{0}
      \def\tgtY{0}
      \def\tgtZ{0}
      \DrawSphere[localcolor]{\tgtX}{\tgtY}{\tgtZ}{\tgtR}

      \node [circle, minimum size=2*\scaling*\tgtR cm](tgt)
        at (xyz cs: x=\tgtX, y=\tgtY) {};
      \node [below right, length] at (tgt.south east) {$\closedball(0,r)$};

      \def\qbxR{0.9}
      \def\qbxX{0}
      \def\qbxY{-1.35}
      \def\qbxZ{0}
      \DrawSphere[qbxcolor]{\qbxX}{\qbxY}{\qbxZ}{\qbxR}

      \node [circle, minimum size=2*\scaling*\qbxR cm](qbx)
        at (xyz cs: x=\qbxX, y=\qbxY) {};

      \def\srcX{-5}
      \def\srcY{0}

      \coordinate[mark coordinate] (0) at (xyz cs: x=\tgtX, y=\tgtY);
      \draw (0) node[right, length]{$0$};
      \coordinate[mark coordinate] (src) at (xyz cs: x=\srcX, y=\srcY);
      \node [below, length] at (src.south) {$s$: source};
      \coordinate[mark coordinate] (cp) at (xyz cs: x=\qbxX, y=\qbxY);
      \draw (cp) node[above right, length]{$c$};

      \draw[<->,length] (src) -- (0,0)
        node[above, midway, xshift=-0.5cm]{$\rho$};
      \draw[<->,length] (0,0) -- (90:\tgtR) node[right, midway]{$r$};

      \draw (210:4cm) node[color=qbxcolor] {\text{translated local}}
        edge[bend right, ->, out=0, in=180+45, color=qbxcolor]
        (qbx.south west);
      \draw (160:3.5cm) node[color=localcolor] {\text{local}}
        edge[bend left, ->, color=localcolor]
        (tgt.north west);
    \end{tikzpicture}
    \caption{Geometric depiction of the use of an intermediate local expansion
      to approximate the local expansion of a potential. The local expansion is
      formed at the origin and translated to $\pt{c}$. This provides the
      geometric setting for the situation described
      in~Section~\ref{sec:local-accuracy}.}%
    \label{fig:local}
  \end{minipage}
\end{figure}

Recall the difference between the notion of accuracy in a point FMM and the notion of
accuracy used in the \algbrand~FMM\@. A point FMM computes the value of a
potential at a point $x$. The accuracy is measured by
\begin{equation}
  \label{eqn:point-fmm-error}
  \text{Point FMM Accuracy} = \left| \ptpot(\pt{x}) - \tilde{\ptpot}_p(\pt{x}) \right|,
\end{equation}
where $\tilde{\ptpot}_p$ is the point FMM's $p$-th order approximation to
the point potential $\ptpot$~\eqref{eqn:point-potential}.

In contrast, the correct error metric to use in the \algbrand~FMM measures the
FMM's ability to approximate the $q$-th order local expansion of the potential
at a generic target $\pt{x}$ from a generic center $\pt{c}$. Let $\coeffs{L}$ denote
the coefficients of the local expansion of $\ptpot$ centered at $c$, and let
$\coeffs{(\tilde{L}_p)}$ denote the coefficients of the local expansion of the
$p$-th order approximation $\tilde{\ptpot}_p$ centered at $c$. Then the accuracy
may be measured by
\begin{equation}
  \label{eqn:gigaqbx-fmm-error}
  \text{\algbrand~FMM Accuracy} = \left| \sum_{n=0}^q \sum_{m=-n}^n
  (\tilde{L}_p)^m_n \norm{x-c}^n Y^m_n(\theta_{x-c}, \phi_{x-c}) - \sum_{n=0}^q
  \sum_{m=-n}^n L^m_n \norm{x-c}^n Y^m_n(\theta_{x-c}, \phi_{x-c}) \right|.
\end{equation}
The formulas~\eqref{eqn:gigaqbx-fmm-error} and~\eqref{eqn:point-fmm-error} are
related in that the point FMM error~\eqref{eqn:point-fmm-error} is a special
case of~\eqref{eqn:gigaqbx-fmm-error} with an expansion radius of zero. The error
estimates in this section will be of the form~\eqref{eqn:gigaqbx-fmm-error}.

Given the intended usage pattern in the FMM, and using an intermediate expansion
order $p \in \mathbb{N}_0$ and target (QBX) expansion order $q \in \mathbb{N}_0$, we
present a study of the error for the following translation chains:
\begin{itemize}
\item Source $\to$ Multipole($p$) $\to$ Local($q$) (Section~\ref{sec:mpole-accuracy})
\item Source $\to$ Local($p$) $\to$ Local($q$) (Section~\ref{sec:local-accuracy})
\item Source $\to$ Multipole($p$) $\to$ Local($p$) $\to$ Local($q$)
  (Section~\ref{sec:m2l-accuracy}).
\end{itemize}
In this analysis, it suffices to consider at most one intermediate local or
multipole expansion of order $p$. This is because the potential that results via
a sequence of $p$-th order local-to-local translations only depends on the
source and the initial expansion center. Similarly, the potential that results
via a sequence of $p$-th order multipole-to-multipole translations only depends
on the source and the final expansion center.

We refer the reader to Appendix~\ref{sec:fmm-translation-experiments} for the
details of the numerical experiments used to obtain the results in this section.
\begin{remark}[Error estimates for multiple sources]
  The following sections work with a single unit-strength source charge, but can be
  straightforwardly extended to the case of an ensemble of $m$ charges
  $\pt{s}_1, \pt{s}_2, \ldots, \pt{s}_m$ with strengths $q_1, q_s, \ldots,
  q_m$. The corresponding error bound is scaled by $\sum_{i=1}^m |q_i|$.
\end{remark}


\subsubsection{Multipole Accuracy}%
\label{sec:mpole-accuracy}

Recall that a multipole expansion is an `outgoing' approximation to the field
due to a set of sources at any point sufficiently far away from the expansion
center. In this section, we consider the ability of a local expansion obtained through translation from a multipole
expansion to approximate the local expansion of a potential. We make use of the
following geometric situation, illustrated in Figure~\ref{fig:mpole}.
Let $R > 0$ and $\rho > r > 0$. Consider a closed ball of radius $r$ centered at $c$, with
$\norm{c} = R + \rho$, containing a unit-strength source~$s$.
Also let a ball of radius~$R$ centered at the origin contain points~$t,
c'$ satisfying $\norm{c'} \leq R$ and $\norm{c' - t} \leq R - \norm{c'}$.

\def\MultipoleSource{T_p^c}
\def\MultipoleTranslated{T_q^{c'}}
\def\TrueLocal{L_q^{c'}}

Suppose a $p$-th order multipole expansion $M_p$ with coefficients
$\coeffs{(\MultipoleSource)}$ is formed at~$c$ due to the source~$s$. Next
suppose that this is translated to a $q$-th order local expansion with
coefficients $\coeffs{(\MultipoleTranslated)} = \MToL{c}{c'}{p}{q}
\coeffs{(\MultipoleSource)}$. The local expansion of the potential $\mathcal{G}(s, \cdot)$ due
to $s$ centered at $c'$ may be written using the local coefficients
$\coeffs{(\TrueLocal)}$. If one were to use the coefficients
$\coeffs{(\MultipoleTranslated)}$ as approximations to the coefficients
$\coeffs{(\TrueLocal)}$ for evaluation of the local expansion at the target $t$,
the approximation error would be the quantity $E_M(q)$ defined as
\begin{equation}
  \label{eqn:m2p-error}
  E_M(q) = \left|
  \sum_{n=0}^q \sum_{m=-n}^n
  (\MultipoleTranslated)^m_n \norm{t-c'}^n Y^m_n(\theta_{t-c'}, \phi_{t-c'})
  -
  \sum_{n=0}^q \sum_{m=-n}^n
  (\TrueLocal)^m_n \norm{t-c'}^n Y^m_n(\theta_{t-c'}, \phi_{t-c'}) \right|.
\end{equation}

The following observation suggests a bound on $E_M$. Define the error function
\[
  \mathcal E(x) = \mathcal{G}(s, x) - M_p(x).
\]
Observe that the quantity~\eqref{eqn:m2p-error} is the magnitude of the $q$-th
order local expansion of $\mathcal E$ centered at $c'$ and evaluated at $t$. It
can be shown that this expansion must converge, in the sense that~$ \lim_{q \to
  \infty} E_M(q) = \mathcal E(t)$.  Therefore, by
Proposition~\ref{prop:local-mpole-accuracy}, we may expect that for
every $\epsilon > 0$, it is the case that for all sufficiently large $q$ that
\[
  E_{M}(q) \leq (1 + \epsilon) \left((4\pi)^{-1}/(\rho - r) \right) \left(r/\rho\right)^{p+1}.
\]
It is also easy to see that~$E_M(0) \leq ((4\pi)^{-1}/(\rho - r)) (r/\rho)^{p+1}$,
as the $0$-th~order expansion is just the quantity $\mathcal E(\pt{0})$, which satisfies this
bound by Proposition~\ref{prop:local-mpole-accuracy}. Given this asymptotic behavior
and the fact that it holds for $q = 0$, it is at
least plausible that the bound~$E_M(q) \leq C ((4\pi)^{-1}/(\rho - r)) (r/\rho)^{p+1}$, for
some $C > 0$, should hold for all~$q$. We formulate this statement as the
following hypothesis.
\begin{hypothesis}[Source $\to$ Multipole($p$) $\to$ Local($q$)]%
  \label{hyp:m2p}
  For the situation described above, there exists a constant~$C > 0$ independent
  of~$R$,~$p$,~$q$,~$\rho$,~$r$,~$s$,~$c'$,~and~$t$ such that the error in the
  multipole-mediated approximation to the local expansion of
  the potential satisfies the bound
  \[
    \left|
    \sum_{n=0}^q \sum_{m=-n}^n
    (\MultipoleTranslated)^m_n \norm{t-c'}^n Y^m_n(\theta_{t-c'}, \phi_{t-c'})
    -
    \sum_{n=0}^q \sum_{m=-n}^n
    (\TrueLocal)^m_n \norm{t-c'}^n Y^m_n(\theta_{t-c'}, \phi_{t-c'}) \right|
    \leq
    \frac{1}{4 \pi} \frac{C}{\rho - r} \left(\frac{r}{\rho}\right)^{p+1}.
  \]
\end{hypothesis}
This hypothesis would imply that multipole translation in the \algbrand~FMM
obeys essentially the same bound as the corresponding point FMM error.  This
bound is true if and only if the approximations to $\mathcal E$ never
significantly `overshoot' the true value of $\mathcal E(\pt{t})$. The
mathematical details of the proof of this hypothesis turn out to be fairly
involved and its complete mathematical resolution is pending. However the
numerical evidence is strongly in favor of this bound. We found, as described in
Appendix~\ref{sec:fmm-translation-experiments}, that the inequality in
Hypothesis~\ref{hyp:m2p} holds numerically with $C = 1.002$. Although the tests
we performed cannot be exhaustive, the small value of~$C$ obtained by numerical
testing is consistent with the truth of the hypothesis.  We
use~Hypothesis~\ref{hyp:m2p} as an error estimate in the remainder of this paper
and we expect to present a proof of this hypothesis at a later date.

\subsubsection{Local Accuracy}%
\label{sec:local-accuracy}

Recall that the role of the local expansion is complementary to the multipole
expansion, since the local expansion represents the potential in a neighborhood
of an expansion center. This section considers the case where a local expansion
is formed due to a potential at one center, subsequently translated to a second center, and used to approximate the
local expansion of the potential at the second center.  We make use of the following geometric
situation, illustrated in Figure~\ref{fig:local}. Let $\rho > r > 0$ and
suppose that a source particle is placed at $s$, with $\norm{s} = \rho$.  Let
$t, c \in \closedball(0, r)$ with $\norm{t - c} \leq r - \norm{c}$.

\def\LocalSource{T_p^0}
\def\LocalTranslated{T_q^c}
\def\TrueLocal{L_q^{c}}

The potential due to the source $s$ can be described in a $q$-th order local expansion
centered at $c$ with coefficients $\coeffs{(\TrueLocal)}$. Consider a $p$-th
order local expansion of the potential centered at the origin with coefficients
$\coeffs{(\LocalSource)}$. Suppose this expansion is translated to a $q$-th
order expansion at $c$ with coefficients $\coeffs{(\LocalTranslated)}$ given by
$\coeffs{(\LocalTranslated)} = \LToL{0}{c}{p}{q} \coeffs{(\LocalSource)}$. If the
coefficients $\coeffs{(\LocalTranslated)}$ are used in place of
$\coeffs{(\TrueLocal)}$ as an approximation to the $q$-th order expansion for
evaluation at a target $t$, the approximation error is the quantity $E_L(q)$
given by
\begin{equation}%
  \label{eqn:l2p-error}
  E_L(q) = \left|
  \sum_{n=0}^q \sum_{m=-n}^n
  (\LocalTranslated)^m_n \norm{t-c}^n Y^m_n(\theta_{t-c}, \phi_{t-c})
  -
  \sum_{n=0}^q \sum_{m=-n}^n
  (\TrueLocal)^m_n \norm{t-c}^n Y^m_n(\theta_{t-c}, \phi_{t-c}) \right|.
\end{equation}
The asymptotic behavior of $E_L(q)$ is similar to that of $E_M(q)$ in the
previous section. In other words, for fixed $p$, we have $ E_L(0) \leq
((4\pi)^{-1} /(\rho - r)) (r / \rho)^{p+1}$ and, for all $\epsilon > 0$ it is the
case for sufficiently large $q$ that $E_L(q) \leq (1 + \epsilon) ((4 \pi)^{-1}
/(\rho - r)) (r / \rho)^{p+1}$. This motivates the following hypothesis about the
behavior of $E_L(q)$ for all $q$.

\begin{hypothesis}[Source $\to$ Local($p$) $\to$ Local($q$)]%
  \label{hyp:l2p}
  For the situation described above, there exists a constant~$C > 0$ independent
  of~$p$,~$q$,~$\rho$,~$r$,~$s$,~$c$,~and~$t$ such that the error in the
  local-mediated approximation to the local expansion of the potential satisfies
  the bound
  \[
    \left|
    \sum_{n=0}^q \sum_{m=-n}^n
    (\LocalTranslated)^m_n \norm{t-c}^n Y^m_n(\theta_{t-c}, \phi_{t-c})
    -
    \sum_{n=0}^q \sum_{m=-n}^n
    (\TrueLocal)^m_n \norm{t-c}^n Y^m_n(\theta_{t-c}, \phi_{t-c}) \right|
    \leq \frac{1}{4 \pi} \frac{C}{\rho - r} \left(\frac{r}{\rho}\right)^{p+1}.
  \]
\end{hypothesis}

Similar to the previous section, we evaluated the truth of this hypothesis
numerically. We observed that the estimate for $E_L(q)$ empirically
satisfies Hypothesis~\ref{hyp:l2p} with $C = 1.001$.  As a result, we make use
of Hypothesis~\ref{hyp:l2p} as an error estimate in this paper.

\subsubsection{Multipole-to-Local Accuracy}%
\label{sec:m2l-accuracy}

\def\MultipoleSource{T_p^c}
\def\FirstLocalTranslated{T_p^0}
\def\SecondLocalTranslated{T_q^{c'}}
\def\TrueLocal{L_q^{c'}}
\def\TrueLocalAtOrigin{L_p^{0}}

In this section, we work with the same geometrical situation as given in
Section~\ref{sec:mpole-accuracy} and illustrated in Figure~\ref{fig:mpole}.
We consider the
accuracy achieved when a multipole expansion is converted into a local expansion
which is then translated to a third center, and used to approximate the local expansion
of the potential at that center. Similar to
Section~\ref{sec:mpole-accuracy}, consider a $p$-th order multipole expansion
due to the source~$s$ formed at the center~$c$ with
coefficients~$\coeffs{(\MultipoleSource)}$. Suppose this expansion is translated
to a $p$-th order local expansion $\FirstLocalTranslated$ centered at the
origin, with coefficients $\coeffs{(\FirstLocalTranslated)} =
\MToL{c}{0}{p}{p}\coeffs{(\MultipoleSource)}$.  Then suppose that this expansion
is translated to a $q$-th order local expansion centered at another center~$c'$,
with the coefficients $\coeffs{(\SecondLocalTranslated)}$ given by
$\coeffs{(\SecondLocalTranslated)} = \LToL{0}{c'}{p}{q}
\coeffs{(\FirstLocalTranslated)}$.

The coefficients $\coeffs{(\SecondLocalTranslated)}$ are those
of a local expansion that approximates the $q$-th order local expansion of the
potential $\mathcal{G}(s, \cdot)$, centered at $c'$. Let $\coeffs{(\TrueLocal)}$ be the local
coefficients of the approximated expansion. The error is given by
\begin{equation}
  \label{eqn:m2l-error}%
  E_{\mathit{M2L}}(q) = \left|
  \sum_{n=0}^q \sum_{m=-n}^n
  (\SecondLocalTranslated)^m_n \norm{t-c'}^n Y^m_n(\theta_{t-c'}, \phi_{t-c'})
  -
  \sum_{n=0}^q \sum_{m=-n}^n
  (\TrueLocal)^m_n \norm{t-c'}^n Y^m_n(\theta_{t-c'}, \phi_{t-c'}) \right|.
\end{equation}
In the limit as $q \to \infty$ we have
\[
  \lim_{q \to \infty} E_{\mathit{M2L}}(q) = |\mathcal{G}(s, t) - \FirstLocalTranslated(\pt{t})|.
\]
To simplify the analysis of this quantity, we introduce the $p$-th order local
expansion of $\mathcal{G}(s, \cdot)$ at the origin, denoted by $\TrueLocalAtOrigin$.  By the
triangle inequality,
\[
  | \mathcal{G}(s, t) - \FirstLocalTranslated({t}) | \leq
  | \mathcal{G}(s, t) - \TrueLocalAtOrigin({t}) | + | \TrueLocalAtOrigin({t}) - \FirstLocalTranslated({t}) |.
\]
The quantity $|\mathcal{G}(s, t) - \TrueLocalAtOrigin(t)|$ may be bounded by
Proposition~\ref{prop:local-mpole-accuracy}.  Since the quantity $|
\TrueLocalAtOrigin(t) - \FirstLocalTranslated(t)|$ is the difference between the
local expansion of a multipole and the local expansion of the point potential at
the origin, we may bound it with Hypothesis~\ref{hyp:m2p}. This yields the bound
\[
  | \mathcal{G}(s, t) - \FirstLocalTranslated(t) | \leq
  \frac{1}{4 \pi} \left[
  \frac{1}{\rho - r} \left( \frac{R}{R + (\rho - r)} \right)^{p+1} +
  \frac{C}{\rho - r} \left( \frac{r}{\rho} \right)^{p+1}
  \right].
\]
Similar to Sections~\ref{sec:mpole-accuracy} and~\ref{sec:local-accuracy}, this
quantity is an asymptotic bound on $E_\mathit{M2L}$ in $q$. This leads to the
following hypothesis.
\begin{hypothesis}[Source $\to$ Multipole($p$) $\to$ Local($p$) $\to$ Local($q$)]%
  \label{hyp:m2l}
  For the situation described above, there exists a constant~$C > 0$ independent
  of~$R$,~$p$,~$q$,~$\rho$,~$r$,~$s$,~$c$,~$c'$,~and~$t$ such that the error in the multipole and
  local mediated approximation to the local expansion of the potential satisfies
  the bound
  \begin{multline*}
    \left|
      \sum_{n=0}^q \sum_{m=-n}^n
      (\SecondLocalTranslated)^m_n \norm{t-c'}^n Y^m_n(\theta_{t-c'}, \phi_{t-c'})
      -
      \sum_{n=0}^q \sum_{m=-n}^n
      (\TrueLocal)^m_n \norm{t-c'}^n Y^m_n(\theta_{t-c'}, \phi_{t-c'})
    \right| \\
    \leq
    \frac{C}{4\pi} \left[
    \frac{1}{\rho - r} \left( \frac{R}{R + (\rho - r)} \right)^{p+1} +
    \frac{1}{\rho - r} \left( \frac{r}{\rho} \right)^{p+1} \right].
  \end{multline*}
\end{hypothesis}

Numerically, we observed that Hypothesis~\ref{hyp:m2l}
appears to hold with $C = 1.001$. Similar to Sections~\ref{sec:mpole-accuracy}
and~\ref{sec:local-accuracy}, the low value of $C$ that was observed is
consistent with the truth of the hypothesis. We therefore make use of
Hypothesis~\ref{hyp:m2l} as an error estimate in the remainder of this paper.

\section{The \algbrand~Algorithm in Three Dimensions}%
\label{sec:algorithm}

This section is concerned with the precise statement of the \algbrand~algorithm
and its complexity and accuracy analysis. The algorithm is presented in
Sections~\ref{sec:gigaqbx-defs} and~\ref{sec:gigaqbx-statement}. The accuracy
and complexity analyses follow in Sections~\ref{sec:accuracy}
and~\ref{sec:complexity}.

\subsection{Overview}

For the benefit of readers familiar with the point FMM, the original QBX
FMM~\cite{rachh:2017:qbx-fmm}, or the two-dimensional version
of~\algbrand\ presented in~\cite{gigaqbx2d}, we point out the main differences
in this section.

\paragraph*{Target Confinement Rule.}
To prevent inaccurate contributions from entering the QBX local expansion, while
still maintaining the efficiency enabled by the use of a tree, the design of the
GIGAQBX algorithm adopts the point of view of QBX centers as `targets with
extent' that each have their own near-field. The realization of this idea is
that \algbrand~only permits QBX expansion balls to exist in a box if they do not
extend beyond an ($\ell^2$) radius surrounding the box, where the length of the
radius is proportional to the box size, so that the near-field of the box ends
up being an over-approximation of the near-field of the target with extent.

We call this modification the target confinement rule.  During box subdivision
in tree construction, if a ball cannot be placed in the child box due to this
restriction, it remains in the parent box.

\paragraph*{Particles Owned by Non-Leaf Boxes.}
It follows from the previous paragraph that the \algbrand~algorithm, unlike the point FMM, allows for
particles (specifically, QBX centers) to be `owned' by non-leaf `ancestor'
boxes. The most important implication of this design is that interaction lists
involving direct evaluations at particles (List 1 and List 3), as well as the
FMM step of evaluation of far-field local expansions, must be redefined to
incorporate the possibility of evaluation at non-leaf boxes.

\paragraph*{Two-Away Near Neighborhood.}
To obtain a good convergence factor in two or three dimensions, it is convenient to
consider the `near-field' to consist of both a box's nearest
neighbors and also its second nearest neighbors.
This is not a new modification, having been present in the original
three-dimensional FMM~\cite{greengard:1988:thesis}.

\paragraph*{`Close' and `Far' Lists.}
In order to actually obtain the accuracy guarantees provided by the target
confinement rule, we disallow certain box near-field interactions that are too
close to the target confinement region from using expansion mediation that would
otherwise be mediated by expansions in the point FMM\@. Specifically, the fields
associated with List 3 and List 4 are subdivided into `close' and `far' lists,
where the close lists are evaluated directly via point-to-QBX-local
interactions, and the far lists maintain sufficient separation to allow for
normal expansion mediation.

\paragraph*{Changes from the 2D Version of \algbrand.}
Perhaps the most significant difference with the two-dimensional
version~\cite{gigaqbx2d} is the use of an $\ell^2$ target confinement region to
confine the QBX expansion centers, whereas the previous version used an
$\ell^\infty$ (square) region. The use of the $\ell^2$ region improves the
efficiency of the scheme, especially in three dimensions. Other than this, the
definitions of the interaction lists and the statement of the algorithm itself
are largely unchanged from the two-dimensional case.

\subsection{Definitions and Interaction Lists}%
\label{sec:gigaqbx-defs}

\subsubsection{Computational Domain}

The computational domain for the algorithm is an octree whose axis-aligned root
box contains all sources, targets, and expansion centers (`particles') as well
as the entirety of each expansion disk. Starting with the root box, the octree
is refined by repeated subdivision of boxes that contain more than $\nmax$
particles until no more non-empty boxes can be produced, pruning any empty
childless boxes. A childless box is also called a \emph{leaf box}. Each box,
including non-leaves, conceptually `owns' a subset of particles. Upon
subdivision, non-expansion center particles are placed from the parent into the spatially
appropriate child box. Expansion centers are only placed in the child box if the
expansion ball can fit within the target confinement region of the child.
Otherwise, they remain owned by the parent.

\subsubsection{Notation}
For a box $b$ in the octree, we will use $|b|$ to denote the ($\ell^\infty$)
radius of $b$.  The \emph{target confinement region} (`TCR', also $\tcr(b)$) of
a box $b$ with center $c$ is $\closedball(\sqrt{3} |b|(1 + t_f), c)$, where
$t_f$ is the \emph{target confinement factor} (`TCF'),
where $\sqrt{3} |b|$ is half the box diagonal.

The \emph{$k$-near~neighborhood} of a box $b$ with center $c$ is the region
$\closedbox(|b|(1 + 2k), c)$.

The \emph{$k$-colleagues} of a box $b$ are boxes of the same level as $b$ that
are contained inside the $k$-near~neighborhood of $b$.  $T_b$ denotes the set of
$2$-colleagues of a box $b$.

We say that two boxes are \emph{$k$-well-separated} if they are on the same
level and are not $k$-colleagues.

The parent of $b$ is denoted $\parent(b)$. The set of ancestors is
$\ancestors(b)$. The set of descendants is $\descendants(b)$. $\ancestors$ and
$\descendants$ are also defined in the natural way for sets of boxes.

A box owning a point or QBX center target is called a \emph{target box}. A box
owning a source quadrature node is called a \emph{source box}. Ancestors of
target boxes are called \emph{target-ancestor boxes}.

\begin{definition}[Adequate separation relation, $\adequatesep$]
  We define a relation $\adequatesep$ over the set of boxes and target
  confinement regions within the tree, with $a \adequatesep b$ to be understood
  as `$a$ is adequately separated from $b$, relative to the size of $a$'.

  We write $a \adequatesep \tcr(b)$ for boxes $a$ and $b$ if the $\ell^2$
  distance from the center of $a$ to the boundary of $\tcr(b)$ is at least
  $3|a|$.

  We write $\tcr(a) \adequatesep b$ for boxes $a$ and $b$ if the $\ell^\infty$
  distance from the center of $a$ to the boundary of $b$ is at least $3|a|(1 +
  t_f)$. (This implies that the $\ell^2$ distance is at least $3|a|(1 + t_f)$.)

  We write $a \not \adequatesep b$ to denote the negation of $a \adequatesep b$.
\end{definition}

Because the size of the TCR is proportional to the box size, $\parent(a)
\adequatesep b$ implies $a \adequatesep b$. We call this property the
`monotonicity' of `$\adequatesep$'.


\subsubsection{Conventional Interaction Lists}

The four conventional interaction lists in the FMM are defined in this section,
with two modifications to the standard definition.
First, non-leaf boxes are allowed as
target boxes. Thus, lists normally associated with only leaf boxes (Lists 1 and
3) may be associated with arbitrary boxes in the tree. Second, our definition
makes use of a near~neighborhood of a box that is two boxes wide.

List 1 consists of interactions with adjacent boxes. These interactions are
carried out directly, without acceleration through any expansions.

\begin{definition}[List 1, $\ilist{1}{b}$]\label{def:list-1}
  For a target box $b$, $\ilist{1}{b}$ consists of all leaf boxes from among
  $\descendants(b) \cup \{b\}$ and the set of boxes adjacent to $b$.
\end{definition}

List 2 consists of interactions with same-level boxes. These interactions have
sufficient separation for an accurate multipole-to-local mediation. List 2 is
\emph{downward-propagating}, which means that the interactions received by the box are
translated downward to the box's descendants via local-to-local translation.

\begin{definition}[List 2, $\ilist{2}{b}$]\label{def:list-2}
  For a target or target-ancestor box $b$, $\ilist{2}{b}$ consists of the children of the
  $2$-colleagues of $b$'s parent that are $2$-well-separated from $b$.
\end{definition}

List 3 consists of interactions where the source box is in the near field
($T_b$) of the target box, but not adjacent to the target box. Unlike List 2,
the separation is insufficient for accurate multipole-to-local mediation. List
3 is not downward-propagating, and it is usually mediated with a
multipole-to-target interaction.

\begin{definition}[List 3, $\ilist{3}{b}$]\label{def:list-3}
  For a target box $b$, a box $d \in \descendants(T_b)$ is in $\ilist{3}{b}$ if
  $d$ is not adjacent to $b$ and, for all $w \in \ancestors(d) \cap
  \descendants(T_b)$, $w$ is adjacent to $b$.
\end{definition}

The following are immediate consequences of this definition:
\begin{itemize}
  \item Any box in $\ilist{3}{b}$ is strictly smaller than $b$.
  \item Any box $d \in \ilist{3}{b}$ is separated from $b$ by at least the width
    of $d$.
  \item List 3 of $b$ contains the immediate children of non-adjacent
    $2$-colleagues of $b$.
\end{itemize}

List 4 consists of interactions where the target box is in the near field of the
source box, but not adjacent to it. Like List 3, the separation is insufficient
for accurate multipole-to-local interaction. Unlike List 3, List 4 \emph{is}
downward-propagating, and one may form a local expansion of the field from the
source box that can be propagated to the descendants.
\begin{definition}[List 4, $\ilist{4}{b}$]\label{def:list-4}
  For a target or target-ancestor box $b$, a source box $d$ is in List 4 of $b$ if
  $d$ is a $2$-colleague of some ancestor of $b$ and $d$ is adjacent to
  $\parent(b)$ but not $b$ itself. Additionally, a source box $d$ is in
  $\ilist{4}{b}$ if $d$ is a $2$-colleague of $b$ and $d$ is not adjacent to $b$.
\end{definition}
The following are immediate consequences of this definition:
\begin{itemize}
  \item Any box in $\ilist{4}{b}$ is at least as large as $b$.
  \item Any box in $\ilist{4}{b}$ is separated from $b$ by at least the width of
    $b$.
  \item For any $d \in \ilist{4}{b}$, either $b \in \ilist{3}{d}$ or $d$ is a
    $2$-colleague of $b$.
\end{itemize}

Our FMM does not make use of List 3 and List 4 directly. Instead, the field from
these lists is sub-partitioned into `close' and `far' lists. This is explained in
the next section.
\subsubsection{Close and Far Lists}

Because of inadequate separation from the target confinement region, our algorithm
cannot make use of the interaction lists $\ilist{3}{b}$ and $\ilist{4}{b}$ as
they would be used normally: via, respectively, a multipole-to-target or
source-to-local interaction. To manage this issue while still
maintaining the benefits of FMM acceleration, we partition the contribution of
the sources contained in these lists into `close' and `far' lists. The field due
to a `close' list cannot be mediated through intermediate expansions for
accuracy reasons, and so it is evaluated directly (i.e.,\ using point-to-QBX-local expansion) at the QBX expansion
centers. In contrast, the field due to a `far' list is sufficiently far from the
target confinement region that mediation via an intermediate expansion is
permissible from the standpoint of accuracy.

For case of List 3, the `close' list consists of source boxes in the near field of
the target box which are not adequately separated from the TCR of the target
box. Interactions from these source boxes must be accumulated directly. The `far' list
is the smallest possible `complement' of this list in the sense that the close
and far lists must cover the entire near field mediated by List~3, and the `far' lists contains
boxes that are as large as permissible given the target confinement
restrictions. The `far' list may be mediated via multipole-to-target
interaction.

\begin{definition}[List 3 close, $\ilist{3close}{b}$]\label{def:list-3-close}
  For a target box $b$, a leaf box $d$ is said to be in $\ilist{3close}{b}$ if $d
  \in \descendants(\ilist{3}{b}) \cup \ilist{3}{b}$ such that $d \not \adequatesep
  \tcr(b)$.
\end{definition}

\begin{definition}[List 3 far, $\ilist{3far}{b}$]\label{def:list-3-far}
  For a target box $b$, a box $d$ is in $\ilist{3far}{b}$ if $d \in
  \descendants(\ilist{3}{b}) \cup \ilist{3}{b}$ such that $d \adequatesep \tcr(b)$
  and, for all $w \in \ancestors(d) \cap (\descendants(\ilist{3}{b}) \cup
  \ilist{3}{b})$, $w \not \adequatesep \tcr(b)$.
\end{definition}

While $\ilist{3close}{b}\cup \ilist{3far}{b}\subseteq \ilist{3}{b}$ does not
hold in general, it is generally the case that $\ilist{3close}{b}\cup
\ilist{3far}{b} \subseteq \descendants(\ilist{3}{b}) \cup \ilist{3}{b}$.

For the case of List 4, the `close' list consists of source boxes from which the
TCR of the target box is not adequately separated. List 4 close is evaluated
directly \emph{only at the targets in the box}, and is not
downward-propagating. List 4 far is downward-propagating. It consists of boxes
form which the TCR of the target box is adequately separated. By monotonicity of
`$\adequatesep$', this means that the TCR of the descendants is also adequately
separated from the sources, ensuring accurate downward propagation. To ensure
that the field of List~4~close is propagated to the descendants, a box in List~4~close
of the parent is placed in List~4~far of a descendant as soon as the TCR of the descendant
is adequately separated from it.

\begin{definition}[List 4 close, $\ilist{4close}{b}$]\label{def:list-4-close}
  Let $b$ be a target or target-ancestor box. A box $d$ is in
  $\ilist{4close}{b}$ if for some $w \in \ancestors(b) \cup \{b\}$ we have $d
  \in \ilist{4}{w}$ and furthermore $\tcr(b) \not \adequatesep d$.
\end{definition}

\begin{definition}[List 4 far, $\ilist{4far}{b}$]\label{def:list-4-far}
  Let $b$ be a target or target-ancestor box. A box $d \in \ilist{4}{b}$ is in
  List 4 far if $\tcr(b) \adequatesep d$.  Furthermore, if $b$ has a parent, a
  box $d \in \ilist{4close}{\parent(b)}$ is in List 4 far if $\tcr(b)
  \adequatesep d$.
\end{definition}

As with List 3, it is generally not the case that $\ilist{4close}{b} \cup
\ilist{4far}{b} \subseteq \ilist{4}{b}$. However, $\ilist{4close}{b} \cup
\ilist{4far}{b}$ only contains boxes from a List 4 of $b$ or an ancestor of $b$.

\begin{remark}[Performance optimization for List 3 far]%
  \label{rem:list3far-to-list3close}%
  One can always remove a box from $\ilist{3far}{b}$ and place its leaf
  descendants in $\ilist{3close}{b}$, without adverse impact on
  accuracy since a multipole-to-QBX-local interaction is replaced with a direct one.
  When the number of sources in the descendants is small,
  doing this decreases the computational cost associated with evaluation
  of the potential due to the sources. We make use of this possibility in the
  complexity analysis.
\end{remark}
\subsection{Formal Statement of Algorithm}%
\label{sec:gigaqbx-statement}


\subsubsection{Notation}

We make use of the following notation, which is a slightly modified version of
the notation used in~\cite{gigaqbx2d}. The following notation refers to `point'
potentials evaluated at a target not requiring QBX owned by a box $b$:
\begin{inparaenum}[(a)]
\item $\Potnear_b(t)$ denotes the potential at a target point $t$ due to all
  sources in $\ilist{1}{b} \cup \ilist{3close}{b} \cup \ilist{4close}{b}$; and
\item $\PotW{b}(t)$ denotes the potential at a target $t$ due to
  all sources in $\ilist{3far}{b}$.
\end{inparaenum}
Let $c$ be a QBX center owned by box $b$. The following notation refers to
potentials evaluated with QBX mediation:
\begin{inparaenum}[(a)]
\item $\Lqbxnear{c}(t)$ denotes the (QBX) local expansion of the potential at
  the center $c$, evaluated at target $t$, due to all sources in $\ilist{1}{b}
  \cup \ilist{3close}{b} \cup \ilist{4close}{b}$;
\item $\LqbxW{c}(t)$ denotes the (QBX) local expansion at the center $c$,
  evaluated at $t$, due to all sources in $\ilist{3far}{b}$; and
\item $\Lqbxfar{c}(t)$ denotes the (QBX) local expansion at the center $c$,
  evaluated at $t$, due to all sources not in $\ilist{1}{b} \cup \ilist{3}{b}
  \cup \ilist{4close}{b}$.
\end{inparaenum}
Lastly, given a box $b$, $\Mpole_b$ and $\Locfar_b$ refer respectively to the
multipole and local expansions associated with the box.

\subsubsection{Algorithmic Parameters}

The parameters to the algorithm are $\pfmm$, the FMM order; $\pqbx$, the QBX
order; $\pquad$, the upsampled quadrature node count; and the target confinement
factor $t_f$. The choice of these parameters is based on the splitting of the
overall error in the scheme into truncation error
(Lemma~\ref{lem:qbx-truncation-3d}), quadrature error
(Lemma~\ref{lem:surface-quad-estimate}), and acceleration error
(Theorem~\ref{thm:accuracy}).

Of these sources of error, perhaps the one that is most straightforward to control with algorithmic parameters is
the acceleration error. Given a tolerance $\epsilon > 0$, choosing $t_f \leq
0.85$ allows setting $\pfmm \approx \lvert \log_{4/3} \epsilon \rvert$ to
guarantee that the relative error for this component is on the order of
$\epsilon$. (See Theorem~\ref{thm:accuracy} below.)

In contrast, the topic of parameter selection for QBX and quadrature order
remains an area of active research. On the subject of quadrature order, we have
found the advice in~\cite{gigaqbx2d}, which suggests the choice of a generically
high order to ensure the smallness of the quadrature error term, to be a useful
guide. Assuming the quadrature error is suitably controlled, we have observed
that $\pfmm$ is a practical upper bound on $\pqbx$, as the error due to
acceleration empirically appears to decrease most slowly of the various sources of error
(see Section~\ref{sec:results}).

The complete statement of the algorithm is given in Algorithm~\ref{alg:gigaqbx}.

\begin{algbreakable}{\algbrand~FMM in Three Dimensions}%
  \label{alg:gigaqbx}%
  \begin{algorithmic}
    \REQUIRE{The maximum number of FMM targets/sources $\nmax$ per
      box for octree refinement and a target confinement factor $t_f$ are
      chosen.}
    \REQUIRE{The input geometry and targets are preprocessed according
      to Section~\ref{sec:accuracy-control}.}
    \REQUIRE{Based on the precision $\epsilon$ to be achieved, a QBX
      order $\pqbx$, an FMM order $\pfmm$, and an oversampled quadrature node
      count $\pquad$ are chosen (see Sections~\ref{sec:background}~and~\ref{sec:accuracy-control}).}
    \ENSURE{An accurate approximation to the potential at all target points
      is computed.}

    \algstage{Stage 1: Build tree}
    {%
    \STATE{Create a octree on the computational domain containing all sources,
      targets, and QBX centers.}
    \REPEAT{}
    \STATE{Subdivide each box containing more than $n_\text{max}$ particles into
      eight children, pruning any empty child boxes. If an expansion center
      cannot be placed in a child box with target confinement factor $t_f$ due to its radius,
      it remains in the parent box.}
    \UNTIL{each box can no longer be subdivided or an iteration produced only
    empty child boxes}
    }

    \algstage{Stage 2: Form multipoles}{%
    \FORALL{boxes $b$}
    \STATE{Form a $\pfmm$-th order multipole expansion $\Mpole_b$ centered at $b$ due to
      sources owned by $b$.}
    \ENDFOR{}
    \FORALL{boxes $b$ in postorder}
    \STATE{For each child of $b$, shift the center of the multipole expansion at
      the child to $b$. Add the resulting expansions to $\Mpole_b$.}
    \ENDFOR{}
    }

    \algstage{Stage 3: Evaluate direct interactions}{%
    \FORALL{boxes $b$}
    \STATE{For each conventional target $t$ owned by $b$, add to $\Potnear_b(t)$
      the contribution due to the interactions from sources owned by boxes in
      $\ilist{1}{b}$ to $t$.}
    \ENDFOR{}
    \FORALL{boxes $b$}
    \STATE{For each QBX center $c$ owned by $b$, add to the expansion
      $\Lqbxnear{c}$ the contribution due to the interactions
      from $\ilist{1}{b}$ to $c$.}
    \ENDFOR{}
    }

    \algstage{Stage 4: Translate multipoles to local expansions}{%
    \FORALL{boxes $b$}
    \STATE{For each box $d \in \ilist{2}{b}$, translate the multipole expansion
      $\Mpole_{d}$ to a local expansion centered at $b$. Add the resulting
      expansions to obtain $\Locfar_b$.}
    \ENDFOR{}
    }

    \algstage{Stage 5(a): Evaluate direct interactions due to $\ilist{3close}{b}$}{%
    \STATE{Repeat Stage 3 with $\ilist{3close}{b}$ instead of $\ilist{1}{b}$.}
    }

    \algstage{Stage 5(b): Evaluate multipoles due to $\ilist{3far}{b}$}{%
    \FORALL{boxes $b$}
    \STATE{For each conventional target $t$ owned by $b$, evaluate the multipole
      expansion $\Mpole_{d}$ of each box $d \in \ilist{3far}{b}$
      to obtain $\PotW{b}(t)$.}
    \ENDFOR{}
    \FORALL{boxes $b$}
    \STATE{For each QBX center $c$ owned by $b$, add to the expansion
      $\LqbxW{c}$ the contributions due to
      the multipole expansion $\Mpole_{d}$ of each box $d \in \ilist{3far}{b}$.}
    \ENDFOR{}
    }

    \algstage{Stage 6(a): Evaluate direct interactions due to $\ilist{4close}{b}$}{%
    \STATE{Repeat Stage 3 with $\ilist{4close}{b}$ instead of $\ilist{1}{b}$.}
    }

    \algstage{Stage 6(b): Form locals due to $\ilist{4far}{b}$}{%
    \FORALL{boxes $b$}
    \STATE{Convert the field of every particle owned by boxes in $\ilist{4far}{b}$
      to a local expansion about $b$. Add to $\Locfar_b$.}
    \ENDFOR{}
    }

    \algstage{Stage 7: Propagate local expansions downward}{%
    \FORALL{boxes $b$ in preorder}
    \STATE{For each child $d$ of $b$, shift the center of the local expansions
      $\Locfar_b$ to the child. Add the resulting expansions to
      $\Locfar_d$ respectively.}
    \ENDFOR{}
    }

    \algstage{Stage 8: Form local expansions at QBX centers}{%
    \FORALL{boxes $b$}
    \STATE{For each QBX center $c$ owned by $b$, translate $\Locfar_b$ to
      $c$, obtaining $\Lqbxfar{c}$.
      }
    \ENDFOR{}
    }

    \algstage{Stage 9: Evaluate final potential at targets}{%
    \FORALL{boxes $b$}
    \STATE{For each conventional target $t$ owned by $b$, evaluate $\Locfar_b(t)$.}
    \STATE{Add $\Potnear_b(t), \PotW{b}(t), \Locfar_b(t)$ to obtain the
      potential at $t$.}
    \ENDFOR{}
    \FORALL{boxes $b$}
    \STATE{For each target $t$ associated to a QBX center $c$ owned by $b$,
      add $\Lqbxnear{c}(t), \LqbxW{c}(t), \Lqbxfar{c}(t)$ to obtain the QBX local
      expansion due to $c$ evaluated at $t$.}
    \ENDFOR{}
    }
  \end{algorithmic}
\end{algbreakable}

\subsection{Accuracy}%
\label{sec:accuracy}

\begin{figure}[h]
  \centering
  \begin{tikzpicture}[scale=0.9, z={(-0.09,-0.09)}]
    \def\tcf{0.5}

    \DrawCube[cubecolor]{0}{0}{0}{1}
    \DrawCube[cubecolor]{-6}{0}{0}{1}

    \begin{scope}[canvas is xz plane at y=-1]
      \draw[cubecolor, dotted] (-1,1) -- +(-4,0);
      \draw[cubecolor, dotted] (-1,-1) -- +(-4,0);
    \end{scope}

    \begin{scope}[canvas is xz plane at y=1]
      \draw[cubecolor, dotted] (-1,1) -- +(-4,0);
      \draw[cubecolor, dotted] (-1,-1) -- +(-4,0);
    \end{scope}

    \begin{scope}[canvas is yz plane at x=-3]
      \draw[cubecolor, dotted] (-1,-1) rectangle (1,1);
    \end{scope}

    \DrawSphere[localcolor]{0}{0}{0}{sqrt 3*(1+\tcf)}
    \DrawSphere[srccolor]{-6}{0}{0}{sqrt 3}

    \draw[<->, length]
      (0,0) --
      (xyz cs: x=-1-\tcf, y=1+\tcf, z=1+\tcf)
      node [right, xshift=0.2cm] {$\sqrt{3} (1 + t_f) r$};

    \draw[<->, length]
      (0,0) --
      (xyz cs: y=-1)
      node [right, midway] {$r$};

    \coordinate[mark coordinate] (t) at (0, 0);
    \coordinate[mark coordinate] (s) at (xyz cs: x=-6);

    \draw[<->, length] (t) -- (s) node[below, midway, xshift=-0.9cm] {$6r$};

    \draw[<->, length] (s) -- +(xyz cs: x=1, y=1, z=1)
      node [left, midway] {$\sqrt{3}r$};
  \end{tikzpicture}
  \[
    \displaystyle
    \frac{\text{\color{localcolor}furthest target}}
    {\text{\color{srccolor}closest source}}
    \leq
    \frac{\sqrt{3}(1+t_f)}{6 - \sqrt{3}}
  \]
  \caption{Convergence factor calculation for List 2. On the left is the source
    box and on the right are the target box and target confinement region.}%
  \label{fig:list2}
\end{figure}
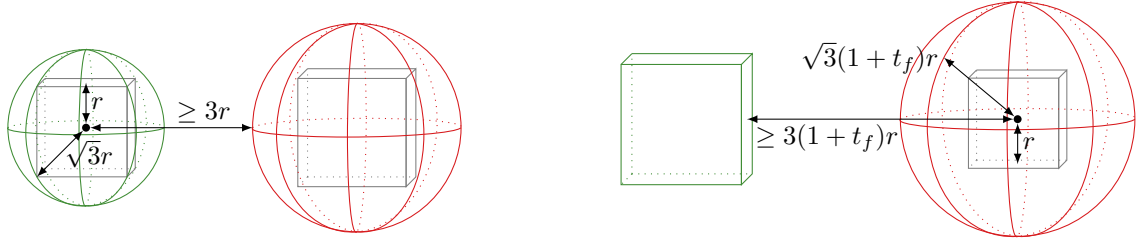

\begin{figure}[ht]
  \begin{minipage}[b]{.48\linewidth}
  \centering
  \begin{tikzpicture}[scale=0.8,z={(-0.09,-0.09)}]
    \def\srcr{0.75};

    \coordinate[mark coordinate] (box1ctr) at (0,0,0);
    \DrawCube[cubecolor]{0}{0}{0}{\srcr};
    \DrawSphere[srccolor]{0}{0}{0}{\srcr*sqrt 3};

    \DrawCube[cubecolor]{4.5}{0}{0}{0.9};
    \DrawSphere[localcolor]{4.5}{0}{0}{sqrt 3};

    \draw[<->, length]
      (box1ctr) -- +(4.5-sqrt 3,0) node[anchor=south, pos=0.7] {$\geq 3 r$};
    \draw[<->, length] (box1ctr) -- ++(0, \srcr) node[pos=0.5, anchor=west] {$r$};
    \draw[<->, length]
      (box1ctr) -- (-\srcr,-\srcr,\srcr) node[pos=0.5, anchor=west] {$\sqrt{3} r$};
  \end{tikzpicture}
  \[
  \displaystyle
  \frac{\text{\color{srccolor}furthest source}}
  {\text{\color{localcolor}closest target}}
  \leq
  \frac{\sqrt{3}}{3}
  \]
  \caption{%
    Convergence factor calculation for List 3 far.
    The source box is on the left and the target box and target confinement
    region are on the right.
  }%
  \label{fig:list3}
  \end{minipage}
  \hfill
  \begin{minipage}[b]{.48\linewidth}
  \centering
  \begin{tikzpicture}[scale=0.8,z={(-0.09,-0.09)}]
    \def\tgtr{0.75};

    \coordinate (box1ctr) at (-1.5,0,0);
    \coordinate[mark coordinate] (box2ctr) at (4,0,0);

    \DrawCube[srccolor]{-1.5}{0}{0}{1};

    \DrawCube[cubecolor]{4}{0}{0}{\tgtr};
    \DrawSphere[localcolor]{4}{0}{0}{\tgtr * 1.5 * sqrt 3};

    \draw[<->, length] (box2ctr) -- ++(0,-\tgtr,0) node[right, midway] {$r$};

    \draw[<->, length] (box2ctr) -- ++(-\tgtr*1.5, +\tgtr*1.5, \tgtr*1.5)
      node[left] {$\sqrt{3}(1 + t_f)r$};

    \draw[<->, length] (box2ctr) -- (-0.5,0) node[anchor=north, pos=0.7] {$\geq 3(1 + t_f)r$};
  \end{tikzpicture}
  \[
  \displaystyle
  \frac{\text{\color{localcolor}furthest target}}
  {\text{\color{srccolor}closest source}}
  \leq
  \frac{\sqrt{3}}{3}
  \]
  \caption{%
    Convergence factor calculation for List 4 far.
    The source box is on the left and the target box and target confinement
    region are on the right.
  }%
  \label{fig:list4}
  \end{minipage}
\end{figure}

We carry out accuracy estimates of algorithm assuming the truth of the hypotheses
in Section~\ref{sec:error-estimates}. These hypotheses imply that the accuracy
of the FMM in approximating the unaccelerated version of the potential is mainly
determined by the choice of $t_f$, with a smaller value of $t_f$ leading to more
accurate results. Choosing a value of $t_f \lessapprox 0.85$ recovers a
convergence factor approximately the same as the 1-away point FMM, which is
$3/4$~\cite{petersen_error_1995}.

\begin{theorem}[Accuracy estimate for \algbrand~algorithm]%
  \label{thm:accuracy}
  Fix a target confinement factor $0 \leq t_f < 2 \sqrt{3} - 2 \approx
  1.47$. Let $R$ denote the radius of the smallest box in the tree and let
  $\omega = \min \left(3 - \sqrt{3}, 6 - 2 \sqrt{3} - \sqrt{3} t_f \right)$.
  Assuming the truth of
  Hypotheses~\ref{hyp:m2p},~\ref{hyp:l2p},~and~\ref{hyp:m2l}, there exists a
  constant $M > 0$ such that for every target point $x \in \mathbb{R}^3$, we
  have
  \[
    \left|
    \mathcal{S}_{\mathrm{QBX}(\pqbx,N)} \mu(x)
    - \mathcal{G}_\pfmm [\mathcal{S}_{\mathrm{QBX}(\pqbx,N)}] \mu(x)
    \right|
    \leq
    \frac{M A}{\omega R}
    \max \left(\frac{\sqrt{3}(1+t_f)}{6 - \sqrt{3}},
      \frac{\sqrt{3}}{3}\right)^{\pfmm+1}.
  \]
  Here $\mathcal{S}_{\mathrm{QBX}(\pqbx,N)} \mu(x)$ denotes the $\pqbx$-th order
  approximation to the single-layer potential given by QBX,
  $\mathcal{G}_\pfmm[\cdot]$ denotes the approximation formed by the
  \algbrand~FMM of order $\pfmm$, and
  \(
     A = \| \mu \|_\infty \sum_{i=1}^N |w_i|,
  \)
  where the $\{w_i\}_{i=1}^N$ are the quadrature weights. The constant $M$ is
  independent of $t_f$, $\omega$, $\mu$, the particle distribution, and the QBX
  and FMM orders.

  In particular, for $t_f \leq (6\sqrt{3} - 7)/4 \approx 0.85$, we have
  \[
    \left|
    \mathcal{S}_{\mathrm{QBX}(\pqbx,N)} \mu(x)
    - \mathcal{G}_\pfmm [\mathcal{S}_{\mathrm{QBX}(\pqbx,N)}] \mu(x)
    \right|
    \leq \frac{M A}{\omega R} \left( \frac{3}{4} \right)^{\pfmm + 1}.
  \]
\end{theorem}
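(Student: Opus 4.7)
The plan is to decompose the total acceleration error into contributions from the three FMM-mediated pathways—List 2, List 3 far, and List 4 far—observing that List 1, List 3 close, and List 4 close are evaluated directly (either at conventional targets or as point-to-QBX-local contributions at QBX centers) and therefore contribute no error beyond what is already absorbed into $\mathcal{S}_{\mathrm{QBX}(\pqbx,N)}\mu$. For each mediated interaction, the target confinement rule together with the monotonicity of $\adequatesep$ guarantees the geometric premises of one of Hypotheses~\ref{hyp:m2p}, \ref{hyp:l2p}, or \ref{hyp:m2l}. The two convergence factors in the final bound come from reading off worst-case distance ratios: Figure~\ref{fig:list2} gives the List 2 ratio $\sqrt{3}(1+t_f)/(6-\sqrt{3})$, which is strictly below $1$ precisely when $t_f < 2\sqrt{3}-2$; Figures~\ref{fig:list3}~and~\ref{fig:list4} each give $\sqrt{3}/3$ for the List 3 far and List 4 far cases, since the relevant boxes are separated from the target confinement region by at least $3|a|$ (respectively $3|a|(1+t_f)$) by definition of $\adequatesep$.

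Next, I would feed these ratios into the three hypotheses. Each List 2 contribution incurs an M2L translation (Hypothesis~\ref{hyp:m2l}) followed by a cascade of downward L2L shifts to reach the relevant QBX center (Hypothesis~\ref{hyp:l2p}); each List 3 far contribution is an M2L directly into the QBX local expansion (Hypothesis~\ref{hyp:m2p}); and each List 4 far contribution is a source-to-local followed by L2L shifts to the center (Hypothesis~\ref{hyp:l2p}). The separations $\rho-r$ appearing in all three hypotheses are proportional to the pertinent box size: a short geometric calculation shows that the gap for List 2 equals $|b|(6 - 2\sqrt{3} - \sqrt{3}t_f)$ and the gap for List 3 far / List 4 far equals $|b|(3 - \sqrt{3})$, so taking the minimum over both types and specializing to the smallest tree box of radius $R$ yields the factor $1/(\omega R)$ in front of the bound.

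Finally, I would sum the per-interaction errors over all source charges and all boxes, absorbing the total absolute charge into $A = \|\mu\|_\infty \sum_i |w_i|$. To obtain $M$ independent of particle distribution, I would invoke the standard octree fact that every box has a uniformly bounded number of entries in each of Lists 2, 3, and 4 (since the two-away neighborhood has $O(1)$ same-level boxes), so every source particle contributes to the error at a given target through only $O(1)$ distinct translation chains, regardless of how many levels the tree has. The final clause for $t_f \leq (6\sqrt{3}-7)/4$ follows by an elementary algebraic check: at this threshold the List 2 factor $\sqrt{3}(1+t_f)/(6-\sqrt{3})$ simplifies to exactly $3/4$, which already majorizes $\sqrt{3}/3 \approx 0.577$, so the maximum equals $3/4$.

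The main obstacle will be the bookkeeping required to show that $M$ is genuinely uniform in $t_f$, in $\pfmm$ and $\pqbx$, and in the particle distribution. Two specific subtleties deserve care. First, Definition~\ref{def:list-4-far} allows a box inherited from an ancestor's List 4 close to be promoted into a descendant's List 4 far once adequate separation is achieved, and one must verify this does not cause a source charge to be double-counted in the error accumulation. Second, the cascade of downward L2L shifts from a high level to a QBX center at a deep level applies Hypothesis~\ref{hyp:l2p} repeatedly; I would need to show that the resulting composition still obeys the same $(r/\rho)^{\pfmm+1}$ bound up to a multiplicative constant, essentially because the hypothesis is stable under composition when each intermediate translation remains well-separated, which is guaranteed by the monotonicity of $\adequatesep$.
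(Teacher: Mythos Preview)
Your decomposition and use of the three hypotheses match the paper's proof exactly, and the convergence-factor calculations are the right ones.  However, the two obstacles you flag in the final paragraph are simpler than you suggest, and your proposed workarounds are unnecessary detours.

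First, the cascade of downward local-to-local shifts does not require a composition argument for the error.  As noted in Section~\ref{sec:accuracy-notion-comparison}, a chain of same-order local-to-local translations is \emph{exact}: the coefficients obtained by translating a $p$-th order local expansion through intermediate centers depend only on the source and the initial center, so no error accumulates along the downward pass.  Thus each hypothesis need only be applied once per translation chain, with the geometry of the box where the chain begins.

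Second, your plan to secure uniformity of $M$ by invoking bounded list sizes is a complexity argument, not an accuracy one.  The correct observation is that the interaction lists \emph{partition} the sources relative to any fixed target: every source particle reaches a given QBX center through exactly one chain (this also disposes of your double-counting concern for promoted List~4 boxes).  Summing the per-source error bound from the relevant hypothesis over all sources therefore yields the factor $A$ directly, and $M$ need only absorb the constants from the three hypotheses and the fixed number of list types.  No counting of list entries per box is required.
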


\begin{proof}
  Without loss of generality, we will assume that $x$ is associated to a QBX
  center $c$. The proof of this statement follows from applying the results of
  Section~\ref{sec:error-estimates} to the definitions of the interaction lists in
  Section~\ref{sec:algorithm}. The potential at $c$ is the sum of the contributions
  $\Lqbxnear{c}(t)$, $\LqbxW{c}(t)$, and $\Lqbxfar{c}(t)$.

  The potential due to $\Lqbxnear{c}(t)$ must arrive via a direct interaction.
  This contribution incurs no (acceleration) error.

  The potential due to $\LqbxW{c}(t)$, which arrives via a multipole-to-QBX-local
  interaction, incurs an error of at most
  \begin{equation}%
    \label{eqn:list3-error}
    \frac{CA}{(3 - \sqrt{3}) R} \left( \frac{\sqrt{3}}{3} \right)^{\pfmm + 1}
  \end{equation}
  where $C > 0$ is some constant. See Hypothesis~\ref{hyp:m2p} and Figure~\ref{fig:list3}.

  The potential due to $\Lqbxfar{c}(t)$ arrives via an interaction of $\ilist{4far}{b'}$
  or $\ilist{2}{b'}$, where $b'$ is the box that owns $c$ or an ancestor box. The
  contribution that comes from $\ilist{4far}{b'}$ arrives via a local-to-QBX-center
  interaction, and incurs an error of at most
  \begin{equation}%
    \label{eqn:list4-error}
    \frac{CA}{(3 - \sqrt{3}) R} \left( \frac{\sqrt{3}}{3} \right)^{\pfmm + 1}
  \end{equation}
  where $C > 0$ is some constant. See Hypothesis~\ref{hyp:l2p} and Figure~\ref{fig:list4}.

  Lastly, the contribution due to all $\ilist{2}{b'}$ interactions
  arrives via a multipole-to-local-to-QBX-center interaction and incurs an error
  of at most
  \begin{multline}%
    \label{eqn:list2-error}
    \frac{CA}{(6 - 2 \sqrt{3} - \sqrt{3} t_f) R}
    \left(
    \left( \frac{\sqrt{3}(1 + t_f)}{6 - \sqrt{3}} \right)^{\pfmm + 1}
    +
    \left(\frac{\sqrt{3}}{6 - \sqrt{3}(1 + t_f)} \right)^{\pfmm + 1}
    \right) \\
    \leq
    \frac{2CA}{(6 - 2 \sqrt{3} - \sqrt{3} t_f) R}
    \left( \frac{\sqrt{3}(1 + t_f)}{6 - \sqrt{3}} \right)^{\pfmm + 1}
  \end{multline}
  where $C > 0$ is some constant.
  To obtain this bound, we have used the stated assumption that $t_f < 2 \sqrt{3} - 2$.
  See Hypothesis~\ref{hyp:m2l} and
  Figure~\ref{fig:list2} for the convergence factor calculation.

  The result follows from
  combining~\eqref{eqn:list2-error},~\eqref{eqn:list3-error},~and~\eqref{eqn:list4-error}.
\end{proof}

\subsection{Complexity}%
\label{sec:complexity}

\def\nstages{9}

\def\EtoEcost{\pfmm^3}
\def\PtoEcost{\pfmm^2}
\def\PtoPcost{\pqbx^2}

\begin{table}
  \centering
  \caption{Complexity of each stage of the \algbrand\ algorithm.}%
  \label{tab:complexity-analysis}
  \begin{tabular}{ccp{0.5\textwidth}}
    \toprule
    Stage & Modeled Operation Count & Note\\

    \midrule

    Stage 1 & $NL$ & There are $N$ total particles with at most $L$ levels of
    refinement. \\

    Stage 2 & $N_S \PtoEcost + N_B \EtoEcost$ & $N_S \PtoEcost$ for
      forming multipoles and the rest for shifting multipoles upward, with each
      shift costing $\EtoEcost$. \\

    Stage 3 & $(27 (N_C + N_S) \nmax + N_C M_C) \PtoPcost$ &
      Lemma~\ref{lem:list1-complexity} \\

    Stage 4 & $875 N_B \EtoEcost$ & Lemma~\ref{lem:list2-complexity} \\

    Stage 5 & $N_C M_C \pqbx^2 + 124L N_S \nmax \PtoPcost$ &
      Lemma~\ref{lem:list3-complexity} \\

    Stage 6 & $375 N_B \nmax \PtoEcost + 250 N_C \nmax \PtoPcost$ &
      Lemma~\ref{lem:list4-complexity} \\

    Stage 7 & $8 N_B \EtoEcost$ & The cost of shifting a local expansion
      downward is $\EtoEcost$. There are at most $8$ children per box. \\

    Stage 8 & $N_C \EtoEcost$ & Cost of translating the box local
      expansions to $N_C$ centers. \\

    Stage 9 & $N_T \PtoPcost$ & Cost of evaluating QBX expansions at
      $N_T$ targets. \\
    \bottomrule
  \end{tabular}
\end{table}

\begin{table}
  \centering
  \caption{Parameters to the complexity analysis.}%
  \label{tab:complexity-params}
  \begin{tabular}{lp{3in}}
    \toprule
    Parameter & Note \\
    \midrule
    $\pfmm$ & FMM order. \\
    $\pqbx$ & QBX order. \\
    $\nmax$ & Desired bound on the number of particles per box (in some
    cases there may be more, see Section~\ref{sec:particle-distribution}). \\
    $t_f$ & Target confinement factor. \\
    $N_B$ & Number of boxes in the tree. \\
    $L$ & Number of levels in the tree. \\
    $N_S$ & Number of sources in the tree. \\
    $N_C$ & Number of QBX centers in the tree. \\
    $N_T$ & Number of targets in the tree. \\
    $N$ & $N \coloneqq N_S + N_C + N_T$; the number of `particles'. \\
    $M_C$ & The average number of source particles inside $\closedbox(4
    \sqrt{3} r_c t_f, c)$, taken over all QBX centers $c$ (see also Section~\ref{sec:particle-distribution}). \\
    \bottomrule
  \end{tabular}
\end{table}

In this section we discuss the (time) complexity of the \algbrand~algorithm in
three dimensions. The costs presented in this section report the asymptotic number of
`modeled floating point operations' (or `modeled flops') performed by the
algorithm. The parameters we introduce in the complexity analysis are
summarized in Table~\ref{tab:complexity-params}.
Table~\ref{tab:complexity-analysis} and Theorem~\ref{thm:complexity} in this
section provide a summary of the complexity analysis.  For extended details, see
Appendix~\ref{sec:detailed-complexity-analysis}.

\subsubsection{Assumptions}%
\label{sec:complexity-assumptions}

We make a number of simplifying assumptions in our complexity analysis.

We assume that all targets have been assigned to a QBX center, so that all
evaluation at targets is done in Stage~9. This is the primary usage pattern for
on-surface evaluation of a layer potential.

We assume, consistent with the hypotheses of Theorem~\ref{thm:accuracy}, that
$t_f < 2$. An assumption of this nature is useful for the analysis of List 4
(Lemma~\ref{lem:list4-complexity}).

We assume the use of spherical harmonic expansions throughout the algorithm. The
cost of translation is modeled using `point-and-shoot' translation operators
(see Section~\ref{sec:translation-complexity} for details). We also assume that
$\pqbx \leq \pfmm$.

\subsubsection{Summary}

Theorem~\ref{thm:complexity} summarizes the complexity of
Algorithm~\ref{alg:gigaqbx}. The cost of the tree build phase (Stage 1) and the
evaluation phase of the algorithm (Stages 2--\nstages) are treated
separately. Under broadly applicable assumptions, the evaluation phase can be
shown to run in time that is proportional to the number of
particles. Nevertheless, the proportionality constant is affected by the details
of the particle distribution in two ways. First, the average size of the
`near~neighborhoods' of QBX centers affects the number of direct interactions in
the algorithm. This is measured by the parameter $M_C$. Second, the number of
boxes in the tree, $N_B$, affects the number of intermediate expansions that are
formed by the algorithm. This parameter is also determined by the details of the
particle distribution.

In the final statement of the complexity analysis, we make the simplifying
assumption that $M_C = O(1)$ and $N_B = O(N)$.

\begin{theorem}[Complexity estimate for \algbrand~algorithm]%
  \label{thm:complexity}%
  \begin{enumerate}[(a)]
    \item The cost in modeled flops of the tree build phase of~the~\algbrand~FMM is $O(NL)$.
    \item Assume that $\pfmm = O(\lvert \log \epsilon \rvert)$. For a fixed
      value of $\nmax$, the cost in modeled flops of the evaluation stage of the
      \algbrand~FMM is
      \(
        O((N_C + N_S + N_B) \lvert \log \epsilon \rvert^3
        + (N_C M_C + NL) \lvert \log \epsilon \rvert^2
        + N_T \lvert \log \epsilon \rvert^2).
      \)
      With a level-restricted octree and $t_f < \sqrt{3} - 1$, the modeled cost is
      \(
        O((N_C + N_S + N_B) \lvert \log \epsilon \rvert^3
        + N_C M_C \lvert \log \epsilon \rvert^2
        + N_T \lvert \log \epsilon \rvert^2).
      \)
      Assuming that the particle distribution satisfies $N_B = O(N)$ and $M_C =
      O(1)$, the worst-case
      modeled cost using a level-restricted octree and~$t_f < \sqrt{3} - 1$ is
      linear~in~$N$ (with a constant dependent on the particle distribution and
      the desired accuracy $\epsilon$).
  \end{enumerate}
\end{theorem}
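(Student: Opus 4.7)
The plan is to prove the two parts separately, reducing part (b) to a summation over the per-stage counts in Table~\ref{tab:complexity-analysis} followed by a geometric argument that uses the level-restriction and $t_f$ assumptions to eliminate the $L$-dependent contribution.

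For part (a), I would simply observe that tree construction touches each of the $N$ particles at each level at which it is resorted, with $O(1)$ work per touch to decide a child assignment (including checking the TCR membership condition for expansion centers). Since there are at most $L$ levels, the total cost is $O(NL)$.

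For part (b), the starting point is to sum the stage-wise bounds in Table~\ref{tab:complexity-analysis} after substituting $\pqbx \le \pfmm = O(\lvert \log \epsilon \rvert)$ and treating $\nmax$ as a fixed constant. Stages~2, 4, 7, and~8, plus the multipole-to-local step in Stage~6, produce expansion-to-expansion translations whose cost aggregates to $O((N_S + N_B + N_C)\lvert \log \epsilon \rvert^3)$. The particle-to-expansion and expansion-to-particle contributions in Stages~2, 6, and~9 aggregate to $O((N_B + N_C + N_T)\lvert \log \epsilon \rvert^2)$. The direct point-to-point and point-to-QBX-local work in Stages~3, 5, and 6 contributes $O((N_C M_C + (N_C + N_S)\nmax)\lvert \log \epsilon \rvert^2)$ together with the one $L$-dependent term $124\,L\,N_S\,\nmax\,\pqbx^2$ from Stage~5. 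Using $L N_S \le NL$ yields the first bound stated in the theorem.

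To pass to the second bound, I need to show that under the level-restriction assumption and $t_f < \sqrt{3}-1$, the $L$ factor disappears. The $L$ factor arises because, a priori, $\ilist{3close}{b}$ can contain leaf descendants of 2-colleagues at every level below $b$. In a level-restricted octree, however, adjacent leaves differ in size by at most a factor of two, so descendants in $\ilist{3close}{b}$ have size comparable to $|b|$; combining this with the choice $t_f < \sqrt{3}-1$, which prevents the TCR of $b$ from intersecting boxes beyond the $2$-colleague neighborhood, one obtains that the number of leaf descendants of 2-colleagues placed in $\ilist{3close}{b}$ is $O(1)$, and hence the total cost of List~3~close interactions is $O(N_S \nmax \pqbx^2)$ rather than $O(L N_S \nmax \pqbx^2)$. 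Finally, substituting the assumptions $N_B = O(N)$ and $M_C = O(1)$ collapses every aggregate to a constant multiple of $N$ times a fixed polylogarithmic factor in~$1/\epsilon$, giving overall linear complexity.

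The main obstacle is the geometric step just described: one must carefully check that, for a level-restricted tree, the combination of the $\ell^2$ TCR of radius $\sqrt{3}|b|(1+t_f)$ and the adequate-separation requirement leaves room for only $O(1)$ leaves in $\ilist{3close}{b}$ regardless of the tree depth. Since this argument ultimately relies on counting the leaves of bounded-ratio size that can fit inside the enlarged TCR, the precise threshold $\sqrt{3}-1$ for $t_f$ enters essentially, and the corresponding bookkeeping (deferred to Lemma~\ref{lem:list3-complexity} in the appendix) is where the bulk of the technical work lies.
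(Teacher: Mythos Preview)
Your approach matches the paper's: part (a) is read off from Stage~1 in Table~\ref{tab:complexity-analysis}, and part (b) follows by summing Stages~2--9 and then invoking the level-restriction argument (Remark~\ref{rem:level-restriction}, not Lemma~\ref{lem:list3-complexity}) to kill the $L$ factor.

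One imprecision worth correcting in your geometric step: the significance of $t_f < \sqrt{3}-1$ is \emph{not} that it keeps $\tcr(b)$ inside the $2$-colleague neighborhood---that holds for considerably larger $t_f$ and would not help, since List~3 boxes already live there. The point is that $\sqrt{3}(1+t_f)|b| < 3|b|$, so $\tcr(b)$ is contained strictly inside the $1$-near neighborhood of $b$. Any box $d$ with $d \not\prec \tcr(b)$ must therefore have its center within $\ell^2$ distance $3|d|$ of a set lying inside the $1$-neighborhood of $b$; combined with level-restriction on the leaf $b$, this forces $|d|$ to be within a constant factor of $|b|$, giving the $O(1)$ bound on $|\ilist{3close}{b} \cup \ilist{3far}{b}|$. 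Your weaker containment statement does not by itself yield this conclusion.
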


\begin{proof}
  The estimate for (a)~follows from the cost of Stage 1 as listed in
  Table~\ref{tab:complexity-analysis}.  This estimate for~(b) follows from
  adding up the costs of Stages~2--\nstages\ as found in
  Table~\ref{tab:complexity-analysis}. The linear running time in the case of a
  level-restricted octree and $t_f < \sqrt{3} - 1$ follows from
  Remark~\ref{rem:level-restriction}.
\end{proof}



\section{Numerical Experiments}%
\label{sec:results}

\def\nmaxgigaqbx{\num{512}}
\def\urchintestarms{8}
\def\urchinquadorder{8}
\def\urchinovsmp{5}
\def\urchintotalquadorder{32}
\def\urchingreentestnodesperelement{\num{295}}
\def\urchingreentestnelementsone{\num{48500}}
\def\urchingreentestnelementstwo{\num{277712}}
\def\expnmax{\num{512}}

We use a family of smooth `urchin' test geometries $\gamma_k$
given analytically in spherical coordinates
$(r_k, \theta,\phi)$ by prescribing $r_k$ as a function of $(\theta,\phi)$, where
\begin{align}
  \label{eq:urchin-warping}
  r_k(\theta,\phi) &= 0.2 + \frac{\Re Y_{k}^{\lfloor k/2\rfloor}(\theta,\phi) - m_k}{M_k-m_k},\\
  \substack{M_k\\m_k} &= {\substack{\max\\\min}}_{\theta\in[0,\pi],\phi\in[0,2\pi]}
    \Re Y_{k}^{\lfloor k/2\rfloor}(\theta,\phi),
  \notag
\end{align}
using the definition of spherical harmonics from~\eqref{eq:spherical-harmonics}.
Figure~\ref{fig:urchin-8} gives a visual impression of $\gamma_8$.

To obtain an accurate unstructured triangular mesh of $\gamma_k$, we use an icosahedron
as a starting point. Each of the icosahedron's faces is equipped with a mapping
$\mapping_\iel\in (P^\urchinquadorder)^3$ and the expansion of $\mapping_\iel$
in orthogonal polynomials on the triangle~\cite{dubiner_spectral_1991,koornwinder_two-variable_1975}
is computed. While the $\ell^2$-norm of the coefficients of the mapping
corresponding to the polynomials of the two highest total degrees exceeds
$10^{-10}$ times the $\ell^2$-norm of all coefficients of the mapping,
the element is bisected, and the warping function~\eqref{eq:urchin-warping} is
(nodally) reevaluated.

\subsection{Accuracy}

{%
  \renewrobustcmd{\bfseries}{\fontseries{b}\selectfont}
  \newcommand{\converged}[1]{\bfseries #1}
  \sisetup{%
    table-format = 1.2e-1,
    table-number-alignment = center,
    table-sign-exponent = true,
    scientific-notation = true,
    round-mode = places,
    round-precision = 2,
    detect-weight = true,
    mode = text,
  }
  \begin{table}
    \centering
    \caption{$\ell^\infty$ error in Green's formula $\mathcal S(\partial_n
      u)-\mathcal D(u)=u/2$, scaled by $1/\|u\|_\infty$, for the
      `urchin' $\gamma_{\urchintestarms}$, using the \algbrand\ algorithm.  $\pfmm$ denotes the FMM
      order and $\pqbx$ the QBX order.  The geometry was discretized with
      $\urchingreentestnelementsone$ triangles for the stage-1 discretization,
      and
      $\urchingreentestnelementstwo$ triangles for the stage-2 discretization, with
      $\urchingreentestnodesperelement$ nodes per
      element. An idealized a-priori estimate
      for the 1-away point FMM error~\cite{petersen_error_1995}
      is included in the first column for comparison.  Entries in bold indicate
      that the FMM error is negligible compared to the other error contributions.}%
    \label{tab:urchin-accuracy}
    \begin{tabular}{ScSSSS}
      \toprule
      {$(3/4)^{\pfmm+1}$} & {$\pfmm$} & {$\pqbx=3$} & {$\pqbx=5$} & {$\pqbx=7$} & {$\pqbx=9$}\\
      \midrule
      0.31640625           &     3 & 0.008291143698645285 & 0.00968404877505063 & 0.009151145525972935 & 0.00917659527996213 \\
      0.177978515625       &    5 & 0.0014306317807280255 & 0.0026739663967821794 & 0.0028549448708101214 & 0.0027803566630102924 \\
      0.04223513603210449  &     10 & \converged{6.076861932032935e-05} & 6.438110189318733e-05 & 0.0001270450424234368 & 0.0001466052625046138 \\
      0.010022595757618546 &   15 & \converged{6.0760217036864854e-05} & \converged{6.3807673565196985e-06} & 3.237558247597872e-06 & 7.07096870849642e-06 \\
      0.002378408954200495 &  20 & \converged{6.076021987995081e-05} & \converged{6.3807144934427785e-06} & 1.413902385243144e-06 & 2.509328730510307e-07 \\
      \bottomrule
    \end{tabular}
  \end{table}
}

We use an analogous procedure to the one from~\cite{gigaqbx2d} to test the
accuracy of the algorithm of this paper through a sequence of
experiments. With $u$ a
harmonic function defined inside $\gamma_8$ and extending smoothly to the boundary,
we make use of \emph{Green's formula}. Because of smoothness, $u$ has
a well-defined normal derivative $\partial_n u$ at the boundary. Then Green's formula
(e.g.~\cite[Theorem 6.5]{kress:2014:integral-equations}) states that for $x\in\Gamma$,
\[ \mathcal{S} (\partial_n u)(x) - \mathcal{D}(u)(x) = \frac{u(x)}{2}. \]
We use the residual in this identity as a measure for the accuracy
that our scheme achieves in the evaluation of layer potential evaluations.
The achieved accuracy in Green's formula is predictive of the accuracy
one might achieve in the solution of boundary value problems. Data to support
this assertion (in two dimensions) is presented in~\cite{gigaqbx2d}.

Letting $u$ be the potential due to a charge located outside
$\Gamma$ at $(3,1,2)^T$, we evaluate $\mathcal{S} (\partial_n u) -
\mathcal{D}(u)$ using our scheme and report the error in the discrete
$\ell^\infty$-norm.
The error reported is the absolute error scaled by $1/\|u\|_\infty$. We use
the urchin geometry $\gamma_\urchintestarms$ and test with various
combinations of QBX order $\pqbx$ and FMM order $\pfmm$. $\gamma_{8}$ was
discretized with $\urchingreentestnelementsone$ triangles for the stage-1
discretization, and $\urchingreentestnelementstwo$ triangles for the stage-2
discretization, with $\urchingreentestnodesperelement$ nodes per element,
to eliminate the influence of quadrature error as a confounding factor
in this experiment.

To `balance' the algorithm, we compute a modeled flop count
(cf.\ Sections~\ref{sec:complexity} and~\ref{sec:exp-cost}) and chose the value
of $\nmax$ that minimizes this modeled cost. In our experiments,
$\nmax=\expnmax$ was the approximate minimizer.  We choose the target confinement factor as
$t_f=0.9$.

Table~\ref{tab:urchin-accuracy} shows the results of these experiments for the
\algbrand\ FMM, scaled by the norm of the test function $u$ and varying $\pqbx$
across columns and $\pfmm$ across rows.  We show table entries in bold if no
decrease in error is observed for at least one subsequent value of $\pfmm$.

The error in each entry of the table may be interpreted as the additive
contribution of truncation error (Lemma~\ref{lem:qbx-truncation-3d}), quadrature
error (Lemma~\ref{lem:surface-quad-estimate}), and FMM acceleration error
(Theorem~\ref{thm:accuracy}). The latter two sources of error are present even
without FMM acceleration. Thus, reading down a given column (with $\pqbx$ held
fixed and $\pfmm$ varying), we do not expect the error to decrease below a fixed
amount, which we term the `unaccelerated QBX error.' This quantity empirically
corresponds to the error shown in bold.

The large decrease in the error as $\pfmm$ increases suggests that as long as
the error still decreases with $\pfmm$, we can assume that the acceleration
error is the dominant error component. Reading across a row of the table (with
$\pfmm$ held fixed and $\pqbx$ varying), we observe that if an entry appears to
be dominated by acceleration error (i.e.\ is not in bold), the errors in the row
are very roughly of the same order of magnitude. This is consistent with
Theorem~\ref{thm:accuracy}, which implies an FMM acceleration error bound that
is independent of the QBX order.

For our chosen value of $t_f$, Theorem~\ref{thm:accuracy} roughly establishes
$\|u\|_\infty {(3/4)}^{\pfmm+1}$ as a bound on the absolute error incurred by
acceleration, neglecting a number of other factors given in the precise
statement of the theorem.  We show ${(3/4)}^{\pfmm+1}$ in the left column of the
table. Importantly, this value is an upper bound for the values in its row, and
thus also a bound on the acceleration error. The strict obedience to this bound
also confirms that the algorithm does not require an FMM order increase to
maintain accuracy (cf.\ \cite{rachh:2017:qbx-fmm} and the discussion in
Section~\ref{sec:approx-acceleration}).

The actual acceleration component of the error in
Table~\ref{tab:urchin-accuracy} in fact appears to decrease more rapidly than
the first column. A similar phenomenon was observed for the two-dimensional case
in~\cite{gigaqbx2d}. This is not entirely unexpected, as the error in the
potential is a weighted average of the individual errors due to the source
particles, which are likely to be separated more generously from the target than
the worst-case estimates in Section~\ref{sec:error-estimates} assume.

In summary, the results in this table empirically confirm the validity of
Theorem~\ref{thm:accuracy} as well as of an additive error model:
\begin{equation}
  |\text{total error}| \leq |\text{unaccelerated QBX error}| + \|u\|_\infty {(3/4)}^{\pfmm+1}.
  \label{eq:gigaqbx-sum-bound}
\end{equation}

\subsection{A BVP with Complex Geometry for the Helmholtz Equation}%
\label{sec:helmholtz-bvp}
\begin{figure}[ht]
  \centering
  \includegraphics[width=0.7\textwidth]{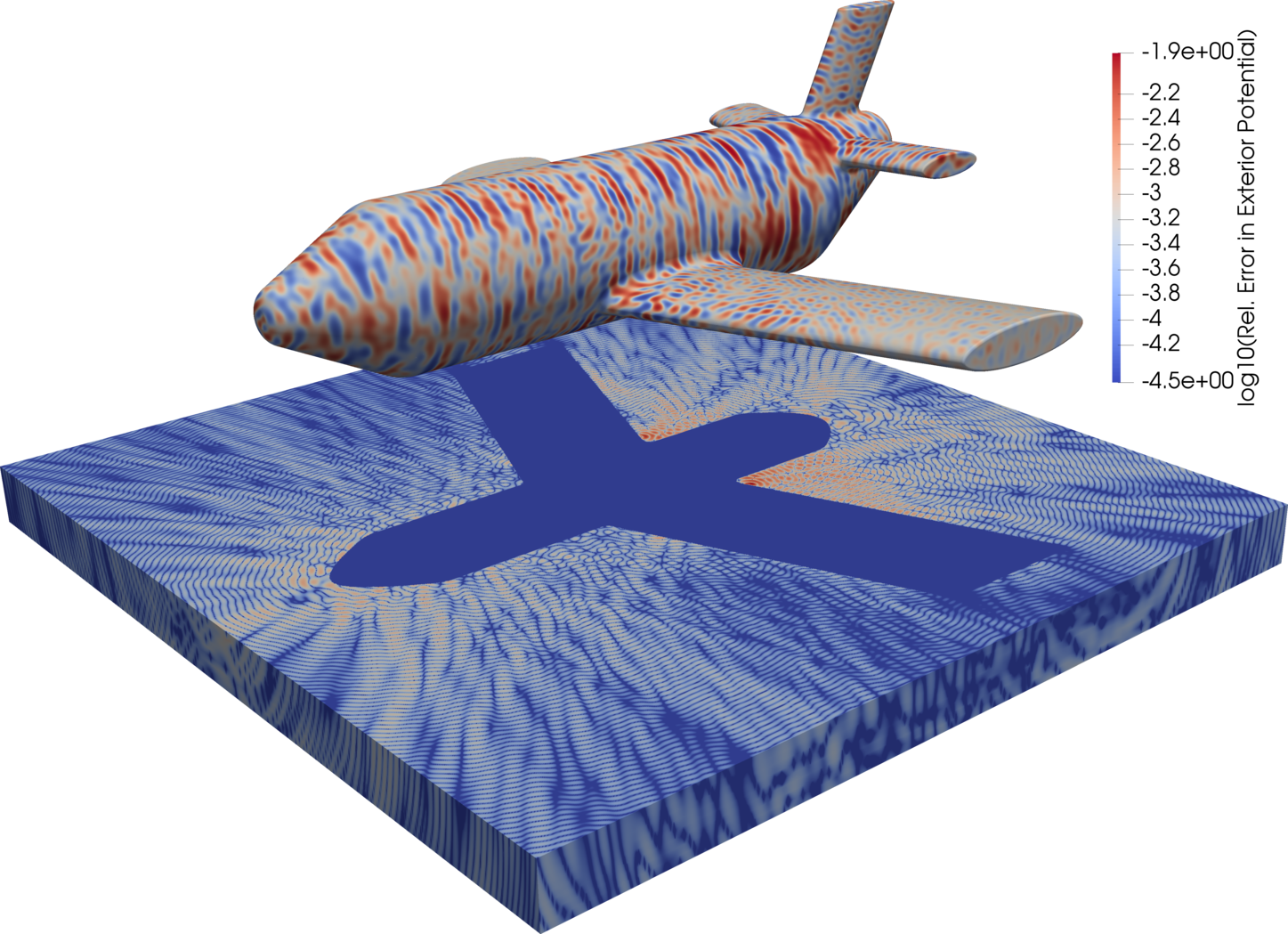}
  \caption{%
    An exterior Dirichlet boundary value problem for the Helmholtz equation
    solved on a `toy plane' geometry. The shading on the geometry itself
    reflects the obtained density $\mu$ using a Brakhage-Werner
    representation~\cite{brakhage_uber_1965} $-\mathcal D\mu+i\mathcal S \mu$.
    The volume visualization illustrates the logarithm of error in the computed
    exterior potential. The top of the volume visualization represents a cut
    roughly at `wing' level of the source geometry. The maximal relative
    $\ell^\infty$ error observed anywhere in the exterior computational domain
    (which extends to cover the entire geometry, including at points on or near
    the surface) was $1.38\times 10^{-2}$.  Section~\ref{sec:helmholtz-bvp}
    describes the computational setup in more detail.
    The potential is evaluated at \num{104947200} targets in the volume, with
    \num{11482688} source points, \num{1230288} QBX centers, and \num{615144}
    on-surface targets.
  }%
  \label{fig:betterplane}
\end{figure}

To support the assertion that our algorithm is broadly applicable and robust,
we demonstrate its use on a challenging, moderate-frequency boundary value
problem for the Helmholtz equation. While we have discussed a version of the
algorithm for the Laplace equation, a direct analog of our algorithm is
applicable for the Helmholtz and many other related elliptic PDEs, assuming the
availability of translation operators with suitable complexity. We expect
our complexity and accuracy analysis to carry over to the case of the Helmholtz
equation with only minor changes. Empirical evidence suggests that this is the
case.

We solve an exterior Dirichlet boundary value problem
\begin{DIFnomarkup}
\begin{alignat*}{2}
  \left( \triangle + k^2 \right) u &= 0 & \quad & \text{in } \mathbb R^3 \setminus \Omega, \\
  u                                &= f & \quad & \text{on } \partial \Omega , \\
  \lim_{r\to\infty} r \left(\frac{\partial}{\partial r} - ik \right) u
                                   &= 0
\end{alignat*}
\end{DIFnomarkup}
where $\Omega \subset \mathbb R^3$ is a closed, bounded region with
smooth boundary $\Gamma = \partial \Omega$. $\Omega$ is given by the geometry
\texttt{surface-3d/betterplane.brep} from~\cite{geometries_repo_2018}. We obtain
a surface mesh consisting of triangles with second-order polynomial mapping
functions for the geometry using Gmsh~\cite{geuzaine_gmsh_2009}.
The geometry (`nose' to `tail') is approximately 19 units long and 20 units wide
(`wingtip' to `wingtip').
The original geometry has \num{37244} triangles, the stage-1 mesh has \num{45755} order~2 triangles (with 6 nodes per element),
and the stage-2 mesh has \num{102524} order-2 triangles; the triangles of the stage-2 quadrature
discretization each have \num{112} nodes. The Helmholtz parameter was set to $k=20$.

We use a Brakhage-Werner representation~\cite{brakhage_uber_1965} to solve for
boundary values obtained from a point potential emanating from a number of
sources in the `tail' of the geometry. Using $L^2$-weighted degrees of
freedom~\cite{bremer_nystrom_2011}, GMRES~\cite{saad_gmres_1986} attained
a decrease in the residual norm by a factor of $10^{-5}$ in 79 iterations. The
calculation took around two days on a dual-socket Intel Xeon~E5-2650~v4 machine.
The Helmholtz translation operators used in the algorithm were those from
FMMLIB3D~\cite{gimubtas_fmmlib,gimbutas_fast_2009}.

We verify that the potential obtained from the boundary value solve matches the
point potential through point evaluations in the volume, obtaining roughly two
digits of accuracy. Details of the relative error in the potential evaluation in
the volume can be found in Figure~\ref{fig:betterplane}.

\subsection{Cost and Scaling}%
\label{sec:exp-cost}
\begin{figure}[t]%
  \centering

  \input{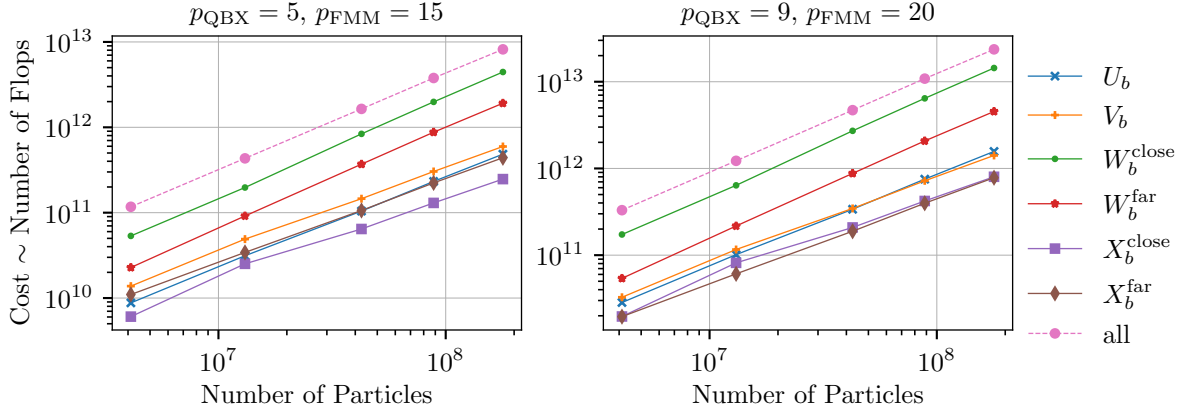}
  \caption{Modeled operation counts for the \algbrand~FMM for evaluating the single-layer
    potential on a sequence of `urchin' geometries of increasing particle
    count. The operations are counted according to the model presented in
    Table~\ref{tab:ilist-operation-counts}. Here, $\nmax = \nmaxgigaqbx$ and $t_f =
    0.9$. The scaling test used the `urchin' geometries
    $\gamma_2,\gamma_4,\dots,\gamma_{10}$.}%
  \label{fig:complexity-gigaqbx}
\end{figure}

\begin{table}%
  \begin{minipage}[t]{0.3\textwidth}%
    \centering%
    \caption{Cost per interaction list entry modeled in
      Figure~\ref{fig:complexity-gigaqbx}, i.e.\ for a single (source box, target box) interaction
      list pair.  $\pfmm$ = FMM order and $\pqbx$ = QBX order. $n_s$ = number of sources
      in the source box and $n_t$ = number of QBX centers in the target box.}%
    \label{tab:ilist-operation-counts}
    \begin{tabular}{lc}
      \toprule
      List & Cost \\
      \midrule
      $\ilist{1}{b}$ & $\pqbx^2 n_s n_t$ \\
      $\ilist{2}{b}$ & $\pfmm^3$ \\
      $\ilist{3close}{b}$ & $\pqbx^2 n_s n_t$ \\
      $\ilist{3far}{b}$ & $\pfmm^3 n_t$ \\
      $\ilist{4close}{b}$ & $\pqbx^2 n_s n_t$ \\
      $\ilist{4far}{b}$ & $\pfmm^2 n_s$ \\
      \bottomrule
    \end{tabular}
  \end{minipage}%
  \hfill%
  \begin{minipage}[t]{0.65\textwidth}%
  {%
    \sisetup{%
      table-format = 1.2e-1,
      table-number-alignment = center,
      table-sign-exponent = true,
      scientific-notation = true,
      round-mode = places,
      round-precision = 2,
      detect-weight = true,
      mode = text,
    }%
      \centering%
      \def\pfmmshort{p}%
      \def\pqbxshort{q}%
      \caption{%
        A comparison of (modeled, cf.~Table~\ref{tab:ilist-operation-counts})
        cost for the GIGAQBX FMM between the use of the conventional
        $\ell^\infty$ box extent norms (analogous to~\cite{gigaqbx2d}) and the
        $\ell^2$ box extent norms introduced in this article.
        Entries in the table show modeled floating point operations
        in the sense of Section~\ref{sec:complexity}.
        The experiment used the `urchin' geometry $\gamma_8$ discretized with
        48,500~stage-1 elements and 277,712~stage-2 elements, where each of
        the latter had 295 nodes.
        The evaluated columns use $\pfmm=15$ and $\pqbx=5$, corresponding
        to around five digits of accuracy following
        Table~\ref{tab:urchin-accuracy}.  For brevity, we let $p=\pfmm$ and $q=\pqbx$.
        Note that the rows shown do \emph{not} add up to the shown total. The
        latter includes minor contributions  to the overall cost (such as the
        upward and downward passes) that we have omitted.
      }%
      \label{tab:l2-comparison}
      \begin{tabular}{llSlS}
        \toprule
        {} & {$\ell^\infty$ (sym.)} & {$\ell^\infty$} & {$\ell^2$ (sym.)} & {$\ell^2$}\\
        \midrule
        $U_b$ & $2.77\times 10^{10} {\pqbxshort}^{2}$ & 691861569975 & $9.29\times 10^{9} {\pqbxshort}^{2}$ & 232238607650\\
        $V_b$ & $6.14\times 10^{7} {\pfmmshort}^{3}$ & 207237676500.0 & $9.00\times 10^{7} {\pfmmshort}^{3}$ & 303722581500.0\\
        $W_b^\mathrm{close}$ & $8.88\times 10^{10} {\pqbxshort}^{2}$ & 2218766863925 & $7.94\times 10^{10} {\pqbxshort}^{2}$ & 1985187158425\\
        $W_b^\mathrm{far}$ & $4.56\times 10^{8} {\pfmmshort}^{3}$ & 1538782623000.0 & $2.59\times 10^{8} {\pfmmshort}^{3}$ & 875782648125.0\\
        $X_b^\mathrm{close}$ & $2.62\times 10^{9} {\pqbxshort}^{2}$ & 65459800350 & $5.20\times 10^{9} {\pqbxshort}^{2}$ & 130105170000\\
        $X_b^\mathrm{far}$ & $9.51\times 10^{8} {\pfmmshort}^{2}$ & 213896794050 & $9.92\times 10^{8} {\pfmmshort}^{2}$ & 223278102225\\
        \midrule
        \textbf{Total} &  & 4965633909425.0 &  & 3780554811300.0\\
        \bottomrule
      \end{tabular}
  }
  \end{minipage}
\end{table}

It remains to examine both the computational cost and the scaling thereof that
the algorithm achieves on geometries of varying size. Rather than relying on
wall time (which is sensitive to machine details as well as varying levels of optimization
and code quality), we present an abstract operation count intended to
asymptotically match the number of floating point operations, similarly to the
approach of Section~\ref{sec:translation-complexity}. We account for each entry
in the interaction lists of Section~\ref{sec:interaction-lists} with the counts
shown in Table~\ref{tab:ilist-operation-counts}. We use the `urchin' test
geometries $\gamma_2,\gamma_4,\dots,\gamma_{10}$ for this computational
experiment.

We show cost data for two pairs of $(\pqbx,\pfmm)$, corresponding to different
accuracies. The first, $(\pqbx,\pfmm)=(5,15)$, corresponds to roughly five digits
of accuracy following Table~\ref{tab:urchin-accuracy}, whereas the second,
$(\pqbx,\pfmm)=(9,20)$, corresponds to around seven digits of accuracy.

As in point FMMs, the main tuning parameter that may be used to balance
various cost contributions and minimize computational cost is $\nmax$, the
maximal number of particles per box. We chose $\nmax$ to minimize the modeled
computational cost, obtaining a value of $\nmax=512$. The TCF $t_f$ mainly
trades off cost and accuracy, we choose $t_f=0.9$.
We show graphs of computational cost across geometry sizes in
Figure~\ref{fig:complexity-gigaqbx}.

Unlike in two dimensions, we observe that $\ilist{3close}{}$ and, to a lesser
extent, $\ilist{3far}{}$ dominate the run time of the algorithm. This is not
entirely unexpected, as the size of the TCR, naturally larger by particle
count in three dimensions, makes its influence felt. A further factor in the
large contribution of $\ilist{3far}{}$ is the high cost of translations even
when the target (QBX) expansion is of comparatively low order,
cf.\ Section~\ref{sec:translation-complexity}.

In accordance with the results of Section~\ref{sec:complexity}, the experiments
support the conclusion that the algorithm exhibits linear scaling in the number
of source and target particles, with one decade of geometry growth (indicated by
the vertical grid lines) leading to one decade of cost growth (indicated by
horizontal grid lines).

\subsection{Cost Implications of the \texorpdfstring{$\ell^2$}{l2}-Based Target Confinement Region}

Next, we seek to understand the impact of the change in the shape of the TCR,
which was box-shaped and defined by the $\ell^\infty$-norm in the earlier
version of our algorithm~\cite{gigaqbx2d}, but which now is spherical and
measured by an $\ell^2$-norm to better match the actual region of convergence of
the obtained local expansions. Table~\ref{tab:l2-comparison} summarizes the
results of an experiment determining the comparative cost of both approaches.
Both versions of the algorithm were balanced individually before conducting the
experiments, in both cases $\nmax=512$ turned out to be near-optimal.  First, we
observe that the algorithmic change has led to a reduction of (modeled)
computational cost by around 25 per cent. We note a marked increase in the cost
contribution of the $\ilist{2}{}$ list, as well as marked decreases in the cost of
the $\ilist{1}{}$ and $\ilist{3far}{}$ lists, all of which are indicative of the
higher efficiency of the method with the $\ell^2$ TCR\@.


\section{Conclusion}%
\label{sec:conclusion}

This paper introduces a fast algorithm for the accurate evaluation of layer
potentials in three dimensions using Quadrature by Expansion (QBX).

Our work builds on and extends the \algbrand~algorithm in two
dimensions~\cite{gigaqbx2d}. Many features of the algorithm carry over broadly
unchanged from the two dimensional setting. However, some parts have required
careful redevelopment. A practical QBX implementation must provide a mechanism
to control for truncation error, quadrature error, and error introduced by FMM
acceleration. To address these challenges in three dimensions, our work combines
new error estimates for FMM translations in three dimensions, a new local
refinement criterion for truncation error control based on scaled-curvature, and a novel adaptive refinement scheme for achieving
source quadrature resolution. In a series of numerical experiments, we
demonstrate that this combination can achieve high accuracy for layer potential
evaluation on complicated geometries. In particular, we show how the FMM
acceleration recovers similar levels of acceleration error as the point FMM\@.
The numerical evidence
for the usefulness of our error control strategies is robust. A rigorous
mathematical treatment of these error control strategies appears eminently
feasible in some cases, such as for translation operators. We leave this as a
subject of future investigation.

Additionally, we describe a benign set of sufficient conditions on the geometry
under which the running time of the algorithm has linear complexity. By counting
modeled flops on large scale geometries, the scaling of the algorithm is shown
to be linear in practice. The most expensive part of the algorithm
is the QBX near field evaluations. Fortunately, strategies such as changing the
shape of the target confinement region are available to reduce this
expense. Further optimizations to the scheme are the subject of ongoing
investigation.


\section*{Acknowledgments}
The authors' research was supported by the National Science Foundation under
award numbers DMS-1418961 and DMS-1654756.  Any   opinions,   findings,   and
conclusions, or   recommendations expressed in this article are those of the
authors and do not necessarily reflect the views of the National Science
Foundation; NSF has not approved or endorsed its content.
Part of the work was performed while the authors were participating in
the HKUST-ICERM workshop `Integral Equation Methods, Fast
Algorithms and Their Applications to Fluid Dynamics and Materials
Science' held in 2017. The authors would like to thank Hao Gao for
identifying a software issue in a prior version of this work.

\appendix
\section{Area Queries}%
\label{sec:area-query-alg}

\emph{Area queries} were introduced in~\cite{rachh:2017:qbx-fmm} in two dimensions.
We describe their (largely straightforward) three-dimensional generalization in this section. They form the core
mechanism on which the many of the geometric operations in this article are
performed. Given a center $c$ and a radius $r$, the area query
computes the set of leaf boxes that intersect $\closedbox(r, c)$. It is assumed
that $c$ falls inside the computational domain and that $r$ is at most the
radius of the tree.

Since the area query as a primitive is used to retrieve sets of sources and
targets by way of their containing boxes, it may appear flawed that the area
query only considers leaf boxes, when the fast algorithm of
Section~\ref{sec:algorithm} permits targets (specifically, those with extent,
i.e.\ QBX expansion balls) to occur in non-leaf boxes. This is not an issue
since all application scenarios of the area query
(Sections~\ref{sec:truncation-error},~\ref{sec:accurate-quadrature},
and~\ref{sec:target-assoc}) use it to find point-shape objects (i.e.\
objects without extent) which are necessarily found in leaf boxes of the tree.

An area query proceeds by descending the tree towards the query center $c$ until
the descent has reached a box whose size is commensurate with the size of the
query box $\closedbox(r, c)$. This box is referred to as the \emph{guiding
  box}. Specifically, the guiding box is the smallest box whose
$1$-near~neighborhood contains $\closedbox(r, c)$. Once this box has been found,
only the leaf descendants of the $1$-near~neighborhood of $b$ need to be checked
for intersection with the query box. The full procedure to carry out an area
query is given in Algorithm~\ref{alg:area-query}.

Recall that a \emph{colleague} of a box (what we also refer to as a
$1$-colleague) is a box on the same level as $b$ that is adjacent to $b$. In a
non-adaptive tree, the $1$-near~neighborhood is specified by a box $b$ and its
set of colleagues.  In an adaptive tree, this is no longer the case as some
colleagues may be missing. The notion of a \emph{peer box} is a generalization
of a colleague which allows for larger boxes to stand in as colleagues if
necessary. This makes it useful for the area query.

\begin{definition}[Peer box~\cite{rachh:2017:qbx-fmm}]
  Let $b$ be a box in an octree. A box $c$ is a \emph{peer box} of $b$ if
  \begin{inparaenum}[(a)]
    \item $c$ is $b$ or adjacent to $b$,
    \item the size of $c$ is at least the size of $b$, and
    \item no child of $c$ satisfies the previous criteria.
  \end{inparaenum}
\end{definition}

\begin{algbreakable}{Area Query}%
  \label{alg:area-query}
  \begin{algorithmic}
    \REQUIRE{A center $c$ and a query radius $r$.}
    \ENSURE{Computes the set of leaf boxes which intersect $\closedbox(r,c)$.}

    \algstage{Find the guiding box $b$}
    {%
      \STATE{$b$ $\gets$ the root box.}
      \LOOP{}
        \IF{$|b| < r \leq 2|b|$ \OR $b$ has no child containing $c$}
        \STATE{\textbf{break}}
        \ENDIF{}
        \STATE{$b$ $\gets$ the child of $b$ containing $c$.}
      \ENDLOOP{}
    }

    \algstage{Check leaf descendants of $b$'s peers}
    {%
      \FORALL{peers $p$ of $b$}
        \FORALL{leaf descendants $l$ of $p$}
          \IF{$\closedbox(r, c) \cap l$ is nonempty}
            \STATE{\textbf{Add} $l$ to the output set.}
          \ENDIF{}
        \ENDFOR{}
      \ENDFOR{}
    }
  \end{algorithmic}
\end{algbreakable}

\section{Numerical Experiments in Support of FMM Translation Error Estimates}%
\label{sec:fmm-translation-experiments}

\begin{table}
  \centering
  \begin{tabular}{cl}
    \toprule
    Parameter & Choice \\
    \midrule
    $R$ & $\{ 0.1, 1, 10 \}$ \\
    $\rho$ & $\{0.1, 1, 10\}$ \\
    $r$ & $\{0.25\rho, 0.5\rho, 0.75\rho\}$ \\
    $q$ & $\{3, 5, 10, 15, 20\}$ \\
    $p$ & $\{3, 5, 10, 15, 20\}$ \\
    \bottomrule
  \end{tabular}
  \caption{Summary of parameters chosen for the numerical experiments used to
    obtain the results in
    Sections~\ref{sec:mpole-accuracy},~\ref{sec:local-accuracy},~and~\ref{sec:m2l-accuracy}.}%
  \label{tab:numerical-experiment-params}
\end{table}

The code used for the numerical experiments performed to obtain the results of
Section~\ref{sec:error-estimates} is available
at~\cite{translation_accuracy_repo_2018}. In this appendix, we describe the
procedure the code uses.

\subsection{Multipole and Multipole-to-Local Accuracy}


\def\nspherepts{42}
\def\nballpts{57}
\def\nmtoptests{675}
\def\nmtopsamples{42^2 \cdot 57}
\def\nmtoltests{675}
\def\nmtolsamples{42^2 \cdot 57}
\def\nltoptests{225}
\def\nltopsamples{42 \cdot 57}

We use the notation of
Section~\ref{sec:mpole-accuracy}. Hypothesis~\ref{hyp:m2p} pertains to the
accuracy of approximating a local expansion using an intermediate multipole
expansion.

As a numerical experiment, we test the truth of this hypothesis at selected
values of the parameters $(R, r, \rho, p, q)$. For a given value of these
parameters, estimate of the value $E_M(q)$~\eqref{eqn:m2p-error} is produced by sampling this value at
$\nmtopsamples$ tuples of the form $(s, c, c', t)$. The values of the parameters
$(R, r, \rho, p, q)$ are chosen according to
Table~\ref{tab:numerical-experiment-params}. In total, $\nmtoptests$ parameter
tuples are tested.

The details of the sampling procedure are as follows. A multipole expansion
center is placed at~$c = (0, 0, R + \rho)$.  First, $\nspherepts$~sources~$s$
are selected from the sphere of radius~$r$ centered at~$c$. Second,
$\nballpts$~centers~$c'$ are selected in the ball of radius $R$ centered at the
origin.  Third, for each center~$c'$, $\nspherepts$~targets~$t$ are selected
from the sphere of radius $R - |\pt{c'}|$ centered at $c'$.

The points selected on the sphere are selected to be approximately
equispaced~\cite{deserno:sphere} and included the poles of the sphere. The
points in the ball are selected from concentric spheres inside the ball.

For each sampled value $(s, c, c', t)$, the multipole expansion due to~$\pt{s}$
is formed at~$\pt{c}$ and translated to a local expansion centered at~$\pt{c'}$.
The quantity $E_M(q)$ is evaluated at the target~$\pt{t}$. We save the largest
observed value of $E_M(q)$ and take this as an upper bound on that quantity for
the given set of parameters $(R, r, \rho, p, q)$.

Hypothesis~\ref{hyp:m2l} in Section~\ref{sec:m2l-accuracy} pertains to the
accuracy of a local expansion approximated by an intermediate multipole and
local expansion. This hypothesis is the same geometric setting as
Hypothesis~\ref{hyp:m2p}. We follow an identical sampling procedure to obtain a
numerical estimate of the quantity $E_{\mathit{M2L}}(q)$~\eqref{eqn:m2l-error}.

\subsection{Local Accuracy}

Section~\ref{sec:local-accuracy} uses a different geometrical scenario from the
multipole case. Hypothesis~\ref{hyp:l2p} in this section pertains to the
accuracy of approximating a local expansion using an intermediate local
expansion.

This hypothesis is tested at selected values of the parameters $(r, \rho, p,
q)$. For a given value of these parameters, we obtain an estimate of the
quantity $E_L(q)$~\eqref{eqn:l2p-error} by sampling a set of $\nltopsamples$ tuples of the form
$(\pt{s}, \pt{c}, \pt{t})$ using a sampling procedure. The parameters are taken
according to Table~\ref{tab:numerical-experiment-params}, with a total of
$\nltoptests$ parameter tuples tested.

The details of the sampling procedure are as follows. First, the source
$\pt{s}$ is placed at $(0, 0, \rho)$. Second, $\nballpts$~centers~$\pt{c}$ are
chosen from inside the ball of radius $r$ centered at the origin. Third, each
center~$\pt{c}$, $\nspherepts$~targets~$\pt{t}$ is chosen from the sphere of
radius $r - |\pt{c}|$ centered at $\pt{c}$.

For each value of the tuple $(\pt{s}, \pt{c}, \pt{t})$, a $p$-th order local
expansion due to~$\pt{s}$ is formed at the origin. The local expansion is
subsequently translated to a $q$-th order local expansion at~$\pt{c}$. The
quantity~$E_L(q)$ is evaluated at $\pt{t}$. The largest observed value
of~$E_L(q)$ is taken as an estimate of the upper bound on the quantity.

\section{Detailed Complexity Analysis}%
\label{sec:detailed-complexity-analysis}

This section provides the details of the complexity analysis from
Section~\ref{sec:complexity}, under the assumptions highlighted in
Section~\ref{sec:complexity-assumptions}. In
Section~\ref{sec:translation-complexity} and~\ref{sec:particle-distribution} we
review the complexity of translations and the effect of the particle
distribution. Section~\ref{sec:interaction-lists} provides the details
supporting the analysis in Table~\ref{tab:complexity-analysis}.

We use the parameters from Table~\ref{tab:complexity-params} throughout this
section. In addition to the parameters from this table, we make use of the
following notation. First, we let $S$ and $C$ denote, respectively the sets of
sources and QBX centers in the tree. Second, given a particle $p$ in the tree,
we let $b_p$ denote the box owning $p$.

\subsection{Complexity of Translation Operators}%
\label{sec:translation-complexity}

A $p$-th order multipole/local expansion requires $(p+1)^2$ expansion
coefficients. The $(p+1)^2$ corresponding basis functions for the coefficients
may be evaluated in $O(p^2)$ time using well-known
recurrences~\cite[Ch.\ 14]{nist:dlmf}. As a result, we model the cost of forming
or evaluating a $p$-th order multipole/local expansion in spherical harmonics as
$p^2$ operations, which is correct to leading order.

The cost of translations of spherical harmonic expansions (multipole/local $\to$
local) may be modeled as follows. With a simple extension to the commonly used
`point and shoot' translation
scheme~\cite{gumerov:3d-translation-ops-comparison}, a $p$-th order expansion
can be translated into a $q$-th order expansion in the following three steps:
\begin{enumerate}
  \item Rotate the coordinate system so that the translation direction is along
    the $z$-axis at a cost of $O(p^3)$ operations.
  \item Translate the expansion at a cost of $O(p q^2)$ operations.
  \item Rotate the coordinate system back into the original at a cost of
    $O(q^3)$ operations.
\end{enumerate}
It follows that the complexity of a translation operator is $O(\max(p,q)^3)$.
If we assume that $\pqbx \leq \pfmm$, the cost of translating a $\pfmm$-th order
expansion into a $\pqbx$-th order expansion can be modeled as $\EtoEcost$
operations.


\subsection{Effect of Particle Distribution}%
\label{sec:particle-distribution}

The running time of the \algbrand~FMM cannot be entirely independent of the
particle distribution. Unlike the point FMM, the algorithm may place more than
$\nmax$ particles in a box. This occurs due to clustering of QBX centers in
boxes because of the target confinement rule. This phenomenon cannot be
disregarded as it occurs in practice even with smooth geometries.

To handle this in the complexity analysis, we find it useful to distinguish
between QBX centers that have `settled' in a leaf box and those that are
`suspended', i.e.~that cannot be placed in a lower box due to target confinement
restrictions. By definition, only the latter kind of centers can cluster in the
tree beyond $\nmax$ particles per box.

\begin{definition}[Leaf-settled / suspended centers]
  Suppose the tree has been constructed following Algorithm~\ref{alg:gigaqbx}.
  Call a center $c$ owned by a box $b$ a \emph{suspended center} if $c$ cannot be
  placed in any hypothetical child box of $b$ due to target confinement
  restrictions. A center that is not suspended in any box is called
  \emph{leaf-settled}.
\end{definition}

To bound the number of algorithmic operations involving suspended QBX centers,
we introduce a parameter into the complexity analysis that corresponds to the
average size of a `neighborhood' of a suspended QBX center---in other words,
the average number of sources with which a QBX center must interact
directly. While this quantity may at first seem to be tied to the tree structure
imposed on the geometry, it is possible to bound this quantity independently of
the tree. A tree-independent bound on this quantity follows from the following
lemma.

\begin{lemma}[Size of a suspended QBX expansion ball relative to box neighborhood]%
  Let $c$ be a suspended QBX center of radius $r_c$ owned by the box $b_c$.
  Then the closed cube $\closedbox(4 \sqrt{3} r_c / t_f, c)$ is,
  geometrically, a superset of the $2$-near~neighborhood of $b_c$.
\end{lemma}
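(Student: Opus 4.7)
The plan is to unfold the definition of ``suspended'' into a quantitative lower bound on $r_c$ relative to $|b_c|$, and then use a simple triangle-inequality argument to show that a cube of the claimed size around $c$ swallows the $2$-near~neighborhood of $b_c$.

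First I would identify the unique child box $b'$ of $b_c$ that spatially contains $c$. By the placement rule in Stage~1 of Algorithm~\ref{alg:gigaqbx}, $c$ is suspended in $b_c$ precisely because the expansion ball $\closedball(r_c,c)$ fails to lie inside $\tcr(b')=\closedball(\sqrt{3}|b'|(1+t_f),c_{b'})$, where $c_{b'}$ is the center of $b'$. This translates into the containment-failure condition
\[
  \norm{c-c_{b'}}+r_c \;>\; \sqrt{3}\,|b'|\,(1+t_f).
\]
Using $|b'|=|b_c|/2$ together with the trivial bound $\norm{c-c_{b'}}\le \sqrt{3}|b'|=\sqrt{3}|b_c|/2$ (the half-diagonal of the child box), rearranging yields the key inequality
\[
  r_c \;>\; \frac{\sqrt{3}\,|b_c|\,t_f}{2},
\]
or equivalently $4\sqrt{3}\,r_c/t_f > 6|b_c|$.

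Next I would verify that any cube of $\ell^\infty$-radius at least $6|b_c|$ centered at $c$ contains the $2$-near~neighborhood of $b_c$. Recall that this neighborhood is $\closedbox(5|b_c|,c_{b_c})$. Since $c\in b_c$, one has $\lvert c-c_{b_c}\rvert_\infty\le |b_c|$, so by the $\ell^\infty$ triangle inequality any point $x$ with $\lvert x-c_{b_c}\rvert_\infty\le 5|b_c|$ satisfies $\lvert x-c\rvert_\infty\le 6|b_c|$. Combining this with the bound from the previous step delivers the claimed containment $\closedbox(4\sqrt{3}\,r_c/t_f,c)\supseteq \closedbox(5|b_c|,c_{b_c})$.

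I do not anticipate a significant obstacle: the argument is essentially just unpacking the target confinement rule for one level of subdivision and inserting the half-diagonal bound. The only subtle point is making sure one uses the correct child box (the one containing $c$), since the rule only forces a ball out of that particular child; boundary cases where $c$ lies on a child interface are handled by the fact that strict inequality is produced by the suspension condition while only non-strict inequality is needed in the final containment.
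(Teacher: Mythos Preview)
Your proof is correct and follows essentially the same approach as the paper: both derive the key inequality $r_c>\sqrt{3}\,|b_c|\,t_f/2$ from the failure of the expansion ball to fit in a child's TCR, then observe that $4\sqrt{3}\,r_c/t_f>6|b_c|$ suffices to cover the $2$-near~neighborhood via an $\ell^\infty$ triangle inequality. You simply spell out the containment-failure condition and the half-diagonal bound more explicitly than the paper does.
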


\begin{proof}
  For any box $b$, the Euclidean distance from a point $x$ inside $b$ to a point
  on the boundary of $\tcr(b)$ is at least $\sqrt{3} |b| t_f$. This distance
  is minimized when $x$ is a box corner.

  Since $c$ is suspended, $c$ cannot fit in any hypothetical child box of $b_c$,
  which has radius $|b_c| / 2$.  It follows from the previous observation that
  \[
    r_c > \sqrt{3} |b_c| t_f / 2.
  \]
  Regardless of where $c$ is located in $b_c$, $\closedbox(c, 6|b_c|)$ is a
  superset of the $2$-near~neighborhood of $b$. The claim follows by observing
  $4 \sqrt{3} r_c / t_f > 6 |b_c|$.
\end{proof}

The quantity $M_C$ is then defined as follows:
\[
  M_C \coloneqq \frac{1}{N_C} \sum_{c \in C} \left| S \cap
  \closedbox(4 \sqrt{3} r_c / t_f, c) \right|
\]
where $r_c$ is the radius of the center $c$. The following proposition is an
immediate consequence of the definition of $M_C$ and the previous lemma.

\begin{proposition}[Bound on $2$-near-neighborhood interactions for QBX expansions]%
  \label{prop:ncmc}%
  The number of source-center pairs $(s, c)$, such that $c$ is a suspended QBX center and
  $s$ is a source particle in the $2$-neighborhood of the box owning $c$, is at
  most $N_C M_C$.
\end{proposition}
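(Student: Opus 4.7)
The plan is to bound the count by applying the preceding containment lemma pointwise at each suspended center and then summing. First I would fix a suspended center $c \in C$ with radius $r_c$, owned by the box $b_c$. The preceding lemma asserts that $\closedbox(4\sqrt{3} r_c / t_f, c)$ is a superset of the $2$-near-neighborhood of $b_c$. Consequently, every source $s$ that lies in the $2$-near-neighborhood of $b_c$ also lies in $\closedbox(4\sqrt{3} r_c / t_f, c)$, so the number of such sources associated with a fixed suspended $c$ is at most $|S \cap \closedbox(4\sqrt{3} r_c / t_f, c)|$.

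Next I would sum this bound over all suspended centers. Because the summands are nonnegative, the total is at most the sum over \emph{all} centers $c \in C$:
\[
  \sum_{\substack{c \in C \\ c \text{ suspended}}}
  |S \cap \closedbox(4\sqrt{3} r_c / t_f, c)|
  \;\le\;
  \sum_{c \in C}
  |S \cap \closedbox(4\sqrt{3} r_c / t_f, c)|.
\]
By the definition of $M_C$ immediately preceding the proposition, the right-hand side equals $N_C M_C$, which yields the claim.

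There is no substantial obstacle: the proposition is essentially a bookkeeping consequence of the preceding lemma together with the definition of $M_C$. The only subtle point worth flagging is that the geometric containment supplied by the lemma applies \emph{only} to suspended centers (leaf-settled centers have much smaller radii relative to their owning boxes and need not be treated here), but this is exactly why restricting the source-center pairs to suspended $c$ in the statement is natural and why passing from the suspended subset to all of $C$ in the final inequality is harmless.
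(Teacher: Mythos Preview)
Your proposal is correct and matches the paper's approach exactly: the paper states only that the proposition is ``an immediate consequence of the definition of $M_C$ and the previous lemma,'' and your argument is precisely the natural unpacking of that claim---apply the containment lemma at each suspended center, sum, and extend the sum to all centers to recognize $N_C M_C$.
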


\subsection{Complexity of Algorithmic Stages Associated with Interaction Lists}%
\label{sec:interaction-lists}

\begin{proposition}[Number of larger leaf boxes in the $1$-neighborhood of a box]%
  \label{prop:num-adjacent-big-leaves}%
  Let $b$ be a box. There are at most $27$ leaf boxes at least at large as $b$
  intersecting the $1$-near~neighborhood of $b$.
\end{proposition}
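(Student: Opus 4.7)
The plan is to exploit the key structural fact that the $1$-near neighborhood $\closedbox(3|b|, c_b)$ is precisely tiled by a $3\times 3\times 3$ grid of $27$ sub-cubes of radius $|b|$ that are themselves grid-aligned at the same refinement level as $b$. Because the computational domain is an octree, every box has side length $2^{-\ell}$ times the root side length, and all boxes of radius $\geq |b|$ are axis-aligned on the same dyadic lattice as $b$. In particular, any box of radius $\geq |b|$ that contains a point of the $1$-near neighborhood either fully contains at least one of these 27 aligned sub-cubes, or only touches the neighborhood on a shared face/edge/corner.

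First I would formalize this grid-alignment observation: any ancestor (or same-level colleague) of $b$ has radius of the form $2^k |b|$ for some $k\geq 0$ and is a union of $8^k$ cells of the reference $|b|$-grid. Consequently, if a leaf box $b'$ satisfies $|b'|\ge |b|$ and has nonempty interior intersection with the $1$-near neighborhood of $b$, then $b'$ contains at least one of the $27$ cells of that neighborhood in its entirety. Next I would use the defining property of a tree decomposition: distinct leaf boxes have disjoint interiors. Therefore the map sending each such leaf box $b'$ to a cell it contains (say, a canonical one, e.g.\ the lexicographically first) is injective, and the number of such leaves is at most $27$.

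Finally, I would dispose of the degenerate case of leaves that meet the closed $1$-near neighborhood only on its topological boundary. Such a leaf lies outside the neighborhood and shares only a face, edge, or corner with it; these coincide with ordinary adjacent colleagues of the $3\times 3\times 3$ cells along the boundary and, by the same alignment argument, are already counted among the at most $27$ leaves containing a boundary cell, so no extra contribution arises. Alternatively, adopting the standard convention that octree boxes have disjoint interiors and the count is taken modulo zero-measure overlaps yields the bound directly.

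The main subtlety, and the only part requiring care, is the alignment step: asserting that a grid-aligned leaf of radius $\ge |b|$ which nontrivially overlaps the neighborhood must fully contain one of the $27$ constituent cells. This follows from the fact that on the $|b|$-lattice, the neighborhood is a union of $27$ closed cells and any aligned cube of radius $\ge |b|$ that intersects the interior of this union covers at least one full cell; the claim is elementary but is where the octree hypothesis is essential (it would fail for a general adaptive partition). Everything else is bookkeeping.
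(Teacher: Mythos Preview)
Your approach is essentially the same as the paper's: both arguments observe that the $1$-near neighborhood is the union of $27$ grid-aligned cells of radius $|b|$ (the positions of $b$ and its colleagues), that any leaf of radius $\ge |b|$ overlapping the neighborhood must contain at least one such cell, and that disjointness of leaf interiors makes the leaf-to-cell assignment injective. The paper compresses this into two sentences by mapping each such leaf $l\neq b$ to a colleague $c_l$ of $b$ contained in $l$.

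One point to clean up: your first handling of the boundary case is not right. A leaf that meets the closed $1$-near neighborhood only on its topological boundary lies entirely outside the $3\times 3\times 3$ block and therefore does \emph{not} contain any of the $27$ cells, so it is not ``already counted.'' Your alternative---adopting the standard half-open (disjoint-interior) convention for octree boxes and reading ``intersecting'' as nontrivial overlap---is the correct resolution, and it is precisely what the paper uses implicitly. With that convention the degenerate case simply does not arise, and you can drop the muddled paragraph.
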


\begin{proof}
  Let $l$ be such a leaf box. If $l \neq b$, choose a box $c_l$ which is a
  colleague of $b$ that is geometrically contained inside $l$. The mapping $l
  \mapsto c_l$ is injective, and $b$ has at most $3^3-1=26$ colleagues.
\end{proof}

\begin{lemma}[List 1 complexity]%
  \label{lem:list1-complexity}%
  The amount of work done in Stage 3 (direct evaluation of the potential
  from adjacent source boxes) is at most
  \[
    (27 (N_C + N_S) \nmax  + N_C M_C) \PtoPcost.
  \]
\end{lemma}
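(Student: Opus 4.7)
The plan is to reduce Stage~3's modeled cost to a count of interacting $(s, c)$ pairs and then bound that count geometrically. Under the simplifying assumption of Section~\ref{sec:complexity-assumptions} that every target has been associated to a QBX center, the first \textbf{FORALL} loop in Stage~3 performs no work, so the total modeled cost equals $\PtoPcost$ times the number of pairs $(s, c)$ with $b_s \in \ilist{1}{b_c}$, using the per-entry cost from Table~\ref{tab:ilist-operation-counts}. It therefore suffices to show this count is at most $27(N_C + N_S)\nmax + N_C M_C$.

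The counting plan is to partition the interacting pairs by the relative sizes of the source box $b_s$ and the center-owning box $b_c$. Every $b_s \in \ilist{1}{b_c}$ is a leaf that either descends from $b_c$ or is adjacent to $b_c$, and I would split on whether (i)~$|b_s| \leq |b_c|$ or (ii)~$|b_s| > |b_c|$. Case~(ii) is direct: for each $b_c$, Proposition~\ref{prop:num-adjacent-big-leaves} supplies at most $27$ strictly larger adjacent leaves, each holding at most $\nmax$ sources, yielding at most $27\,\nmax N_C$ pairs in total.

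I would handle case~(i) by further splitting on whether $c$ is leaf-settled or suspended. When $c$ is leaf-settled, both $b_s$ and $b_c$ are leaves with $b_s$ adjacent to or equal to $b_c$ and $|b_s|\leq|b_c|$; applying Proposition~\ref{prop:num-adjacent-big-leaves} with the roles of source and center swapped bounds, for each fixed leaf $b_s$, the number of leaves $b_c$ of size at least $|b_s|$ in the $1$-near~neighborhood of $b_s$ by $27$, each owning at most $\nmax$ centers, so this sub-case contributes at most $27\,\nmax N_S$ pairs. When $c$ is suspended, the fact that $b_s$ has size at most $|b_c|$ and lies in the $1$-near~neighborhood of $b_c$, combined with the suspended-radius lemma preceding Proposition~\ref{prop:ncmc}, implies each contributing source lies in the cube $\closedbox(4\sqrt{3}\,r_c/t_f,\, c)$; summing over suspended $c$ and invoking the definition of $M_C$ caps this sub-case at $N_C M_C$ pairs.

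Adding the three contributions yields the claimed pair count, and multiplying by $\PtoPcost$ finishes the lemma. The subtle step, where I expect to spend the most care, is the application of Proposition~\ref{prop:num-adjacent-big-leaves} in both directions: the proposition controls only the larger adjacent leaves of a fixed box, so the leaf-settled sub-case of~(i) must be charged to $b_s$ (not $b_c$) so that the $27$-count covers both equal-size and larger center-owning leaves, and the $|b_s| = |b_c|$ boundary must be kept on a single side of the split to avoid double counting between case~(i) and case~(ii).
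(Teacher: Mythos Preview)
Your proposal is correct and follows essentially the same approach as the paper: both reduce to counting $(s,c)$ pairs, split by the relative sizes of $b_s$ and $b_c$, apply Proposition~\ref{prop:num-adjacent-big-leaves} in one direction for the larger-source case and in the swapped direction for the leaf-settled smaller-source case, and invoke Proposition~\ref{prop:ncmc} for suspended centers. The only cosmetic difference is that the paper places the equal-size case $|b_s|=|b_c|$ in its ``big'' bucket while you place it in case~(i), a point you already handle carefully.
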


\begin{proof}
  Define the set $U$ as $U = \{ (s, c) \mid b_s \in U_{b_c} \}$. Each
  source-center interaction costs $\PtoPcost$ operations. The number of Stage 3
  interactions is $|U|$, so the cost of Stage 3 is at most $\PtoPcost |U|$. $U$
  may be written as a disjoint union $U = U_\mathrm{big} \cup U_\mathrm{small}$,
  where $U_\mathrm{big}$ contains all pairs $(s, c)$ such that $|b_s| \geq
  |b_c|$.

  For any center $c$, Proposition~\ref{prop:num-adjacent-big-leaves} implies
  that there may be at most $27 \nmax$ sources in leaf boxes at least
  as large as $b_c$ contributing to the potential via $U_{b_c}$. Thus
  $|U_\mathrm{big}| \leq 27 N_C \nmax$.

  In a similar way, one can show that there are at most $27 N_S \nmax$ pairs
  $(s, c) \in U_\mathrm{small}$ such that $c$ is a leaf-settled center. By
  Proposition~\ref{prop:ncmc}, there are at most $N_C M_C$ pairs $(s, c) \in
  U_\mathrm{small}$ such that $c$ is a suspended center. It follows that
  $|U_\mathrm{small}| \leq N_C M_C + 27 N_S \nmax$.
\end{proof}

\begin{lemma}[List 2 complexity]%
  \label{lem:list2-complexity}%
  The amount of work done in Stage 4 (translation of multipole to local
  expansions) is at most $875 N_B \EtoEcost$.
\end{lemma}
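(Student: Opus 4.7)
The plan is to bound $|\ilist{2}{b}|$ uniformly by $875$ for every box $b$, and then multiply by the per-translation cost $\EtoEcost$ and sum over all $N_B$ boxes. Since Stage 4 performs exactly one multipole-to-local translation for each pair $(b,d)$ with $d\in \ilist{2}{b}$, and each such translation has cost $\EtoEcost$ under the `point and shoot' model from Section~\ref{sec:translation-complexity}, the total work is bounded by $\EtoEcost\sum_{b}|\ilist{2}{b}|\le 875\,N_B \EtoEcost$, which is the stated estimate.

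To bound $|\ilist{2}{b}|$, I would unfold Definition~\ref{def:list-2}. First, $\parent(b)$ has at most as many $2$-colleagues as there are same-level boxes inside its $2$-near~neighborhood; since that region is a cube of side length $5|\parent(b)|$, there are at most $5^3=125$ such boxes (counting $\parent(b)$ itself, which is permitted as a $2$-colleague of itself). Second, each of those $125$ $2$-colleagues has at most $8$ children in the octree, yielding at most $125\cdot 8 = 1000$ candidate children. Third, from these candidates I must discard those that fail to be $2$-well-separated from $b$, i.e.\ the same-level boxes lying inside the $2$-near~neighborhood of $b$. That neighborhood is a cube of side length $5|b|$, so it contains at most $5^3=125$ boxes on the level of $b$, each of which is either $b$ itself or a $2$-colleague of $b$ and must be removed. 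Subtracting gives $|\ilist{2}{b}|\le 1000-125=875$.

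The only subtlety is verifying that every $2$-colleague or copy of $b$ on its own level is indeed accounted for among the $1000$ candidate children, so that the subtraction is legitimate. This follows because any same-level box $d$ inside the $2$-near~neighborhood of $b$ has parent $\parent(d)$ whose center lies within distance $2|\parent(b)|$ of the center of $\parent(b)$ (in $\ell^\infty$), so $\parent(d)$ is a $2$-colleague of $\parent(b)$, and hence $d$ appears as a child of some $2$-colleague of $\parent(b)$. This is the only step that requires any care; the rest of the argument is a direct box-counting exercise in an octree. Summing over the at most $N_B$ target boxes gives the claimed bound $875\,N_B\EtoEcost$.
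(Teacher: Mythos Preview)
Your proof is correct and follows essentially the same approach as the paper: bound $|\ilist{2}{b}|$ by counting children of the parent's $2$-colleagues (at most $5^3\cdot 8=10^3=1000$) and subtracting the $5^3=125$ boxes that are $2$-colleagues of $b$, then multiply by $\EtoEcost$ and sum over $N_B$ boxes. Your extra verification that every $2$-colleague of $b$ indeed appears among the $1000$ candidates is a welcome bit of rigor that the paper leaves implicit.
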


\begin{proof}
  There are at most $N_B$ boxes. The size of List 2 for a box $b$ is at most
  $10^3 - 5^3 = 875$, since there are at most $10^3$ descendants of
  $2$-colleagues of the parent of $b$, of which $5^3$ are $2$-colleagues of $b$
  itself so they cannot be in List 2 of $b$. Each multipole-to-local translation
  costs $\EtoEcost$ operations.
\end{proof}

\begin{lemma}[List 3 complexity]%
  \label{lem:list3-complexity}%
  The amount of work done in Stage 5 (evaluation of List 3 close and far) is at
  most \[N_C M_C \PtoPcost + 124 L N_S \nmax \PtoPcost.\]
\end{lemma}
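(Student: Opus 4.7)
The plan is to reduce the entire cost of Stage~5 to an upper bound expressed as a count of source--center pairs, each weighted by the direct-interaction cost $\PtoPcost$, and then bound that count by the two terms in the stated inequality.

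First, I would invoke Remark~\ref{rem:list3far-to-list3close}: for every pair $(d,b)$ with $d\in\ilist{3far}{b}$, the algorithm is free to replace a multipole-to-QBX-local translation (cost $n_{c,b}\pfmm^3$) with a direct evaluation (cost $n_d\cdot n_{c,b}\PtoPcost$) whenever this is cheaper, and in any case $\min(n_{c,b}\pfmm^3,\,n_d n_{c,b}\PtoPcost)\le n_d n_{c,b}\PtoPcost$. Combining with the direct evaluations already performed for $\ilist{3close}{b}$ in Stage~5(a), the total cost of Stage~5 is at most $|P_3|\,\PtoPcost$, where
\[
  P_3 \coloneqq \bigl\{(s,c) : c \in b,\ b_s \in \descendants(\ilist{3}{b})\cup\ilist{3}{b} \text{ for some target box } b\bigr\}.
\]

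Next, I would partition $P_3 = P_3^{\text{susp}} \sqcup P_3^{\text{leaf}}$ according to whether the center $c$ is suspended or leaf-settled in $b_c$, and bound each piece separately.

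For $P_3^{\text{susp}}$: any $b_s$ with an ancestor in $\ilist{3}{b_c}$ must lie among the descendants of a $2$-colleague of $b_c$, hence $s$ lies in the $2$-near neighborhood of $b_c$. By the lemma preceding Proposition~\ref{prop:ncmc}, that neighborhood is contained in $\closedbox(4\sqrt{3}r_c/t_f, c)$, so Proposition~\ref{prop:ncmc} yields $|P_3^{\text{susp}}| \le N_C M_C$ directly.

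For $P_3^{\text{leaf}}$, the key combinatorial step is to show that for each source $s$ there are at most $124L$ target boxes $b$ with $b_s$ or an ancestor of $b_s$ in $\ilist{3}{b}$. If $a$ is such an ancestor of $b_s$ lying in $\ilist{3}{b}$, then $a$ is a descendant of some $2$-colleague $a'$ of $b$ at $b$'s level; because $a'$ is an ancestor of $a$, it is also an ancestor of $b_s$, hence uniquely determined by $b_s$ and the level $l$ of $b$. Since $a'$ has at most $5^3-1 = 124$ same-level $2$-colleagues, there are at most $124$ candidate boxes $b$ per level, and at most $L$ applicable levels (those above $b_s$). For leaf-settled $c$, $b_c$ is a leaf box and holds at most $\nmax$ leaf-settled centers, yielding $|P_3^{\text{leaf}}| \le 124 L N_S \nmax$.

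Summing the two bounds gives $|P_3|\,\PtoPcost \le N_C M_C \PtoPcost + 124 L N_S \nmax\PtoPcost$, completing the proof. The main obstacle is the combinatorial bound of $124 L$ target boxes per source: I need to carefully track the interplay between the definitions of $\ilist{3close}{}$, $\ilist{3far}{}$, and the ``minimality'' clause in Definition~\ref{def:list-3}, and confirm that in all cases the ancestor of $b_s$ in List~3 corresponds to a unique same-level $2$-colleague of $b$. The remaining reductions (comparing multipole vs.\ direct cost, containment in the $2$-near neighborhood, bounding centers per leaf box by $\nmax$) are essentially definitional.
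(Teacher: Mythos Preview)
Your proposal is correct and follows essentially the same approach as the paper: reduce all Stage~5 work to direct source--center interactions via Remark~\ref{rem:list3far-to-list3close}, split centers into suspended versus leaf-settled, apply Proposition~\ref{prop:ncmc} for the suspended piece, and for the leaf-settled piece count, from the source's perspective, the at most $124$ candidate target boxes per level over $L$ levels. Your combinatorial step is spelled out in more detail than the paper's (which simply asserts that $b_c$ must be a $2$-colleague of $b_s$ or one of its ancestors and counts $5^3-1=124$ such colleagues per level), but the argument is the same.
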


\begin{proof}
  We make the simplifying assumption that \emph{all} Stage 5 interactions are
  mediated by List 3 close. This assumption \emph{will not lead to an
    undercount} of the cost of the Stage 5 interactions, if the optimization in
  Remark~\ref{rem:list3far-to-list3close} has been applied. Recall that a List 3
  far interaction is a multipole-to-target interaction, and a List 3 close
  interaction is a source-to-target interaction.  The only way the above
  assumption could undercount the cost of Stage 5 is if a List 3 far interaction
  were more expensive than the equivalent interactions that occur with the leaf
  descendants using List 3 close.  But if a List 3 far interaction is more
  expensive than List 3 close interaction, the interaction of the former type
  may be converted to the latter type with no loss in accuracy.

  Under this assumption, Proposition~\ref{prop:ncmc} implies that the cost of
  all Stage 5 interactions aimed at suspended centers is at most $N_C M_C \PtoPcost$.

  Now, consider the Stage 5 interactions aimed at leaf-settled centers. Let $s$
  be a source owned by a box $b_s$. If $s$ interacts via List 3 close with a
  leaf-settled center $c$, then $c$ must be owned by a box that is a
  $2$-colleague of either $b_s$ or an ancestor of $b_s$. A box has at most $5^3
  - 1 = 124$ boxes that are $2$-colleagues. Since each source-center interaction
  costs $\PtoPcost$ and there are at most $\nmax$ leaf-settled centers per box,
  the cost of all interactions aimed at leaf-settled centers is at most $124 L
  N_S \nmax \PtoPcost$.
\end{proof}

\begin{remark}[Effect of level-restriction on List 3 complexity]%
  \label{rem:level-restriction}%
  The factor of $L$ in Lemma~\ref{lem:list3-complexity} suggests that Stage 5
  has a worst-case superlinear scaling. A number of modifications to the
  algorithm are available that can provably remove the asymptotic factor of
  $L$. For instance, a cost estimate for Stage 5 that is independent of $L$ may
  be derived assuming that the tree is level-restricted, meaning that adjacent
  leaves differ by at most one level, and that $t_f < \sqrt{3} - 1 \approx
  0.73$.

  For $t_f < \sqrt{3} - 1$, $\tcr(b)$ is contained strictly inside the
  $1$-neighborhood of $b$. If the tree is level-restricted, this implies that if
  $b$ is a leaf box, any box in $\ilist{3close}{b} \cup \ilist{3far}{b}$ cannot
  be more than a constant factor smaller than $b$. This further implies that for
  a leaf box $b$, the quantity $|\ilist{3close}{b} \cup \ilist{3far}{b}|$ is at
  most a constant that depends on $t_f$ and on the dimension. This can be used
  to construct a cost estimate independent of $L$. We leave the details of the
  derivation to the reader.

  Using a level-restricted tree does not impact the asymptotic scaling of any
  other stage of the algorithm. Any octree tree may be converted to be
  level-restricted by repeatedly subdividing the larger of the leaf boxes that
  violate the level-restriction criterion. The level-restricted tree that results
  has a constant factor as many
  boxes. See~\cite{moore:1995:cost-of-balancing-generalized-quadtrees} for
  details.
\end{remark}

\begin{lemma}[List 4 complexity]%
  \label{lem:list4-complexity}%
  The cost of all Stage 6 interactions (evaluation of the potential due to List
  4 close and far) is at most
  \[
    375 N_B \nmax \PtoEcost + 250 N_C \nmax \PtoPcost.
  \]
\end{lemma}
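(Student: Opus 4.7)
The plan is to decompose the Stage 6 cost as the sum of Stage 6(a) (direct source-to-center interactions from $\ilist{4close}{b}$, at cost $\PtoPcost$ per source-center pair) and Stage 6(b) (source-to-local-expansion formation from $\ilist{4far}{b}$, at cost $\PtoEcost$ per source per target box), and to bound each term by a constant multiple of the appropriate tree-level quantity. The strategy mirrors the pattern already used in Lemmas~\ref{lem:list1-complexity}--\ref{lem:list3-complexity}: pass from a sum over (source particle, target box or center) pairs to a sum over pairs of boxes, and show that for every target the relevant interaction list has cardinality bounded by a geometric constant.

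For Stage 6(b), the total cost equals
\[
  \PtoEcost \sum_b \sum_{d \in \ilist{4far}{b}} \lvert S \cap d \rvert .
\]
I would show $\lvert \ilist{4far}{b} \rvert \le 375$ for every box $b$, which together with the at-most-$\nmax$ sources per source box and the $N_B$ target boxes yields the claimed $375\, N_B\, \nmax\, \PtoEcost$. To obtain this cardinality bound I would combine two geometric observations. First, any $d \in \ilist{4far}{b}$ must satisfy $\tcr(\parent(b)) \not\adequatesep d$: if it were adequately separated already at the parent, the propagation rule underlying Definition~\ref{def:list-4-far} would have placed $d$ in $\ilist{4far}{\parent(b)}$ (or an even earlier ancestor) instead. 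This confines $d$ to an $\ell^\infty$-neighborhood of $\parent(b)$ of radius at most $3\lvert \parent(b) \rvert (1 + t_f) < 9 \lvert \parent(b)\rvert$, using the standing assumption $t_f < 2$. Second, the List 4 definition forces $\lvert d \rvert \ge \lvert b \rvert$. The number of octree boxes of each admissible size $2^k \lvert b \rvert$ lying in this neighborhood is bounded by a geometrically decaying sequence in $k$, and summing yields the constant $375$.

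For Stage 6(a), the cost is
\[
  \PtoPcost \sum_{c \in C} \sum_{d \in \ilist{4close}{b_c}} \lvert S \cap d \rvert ,
\]
and the argument is parallel: I would show $\lvert \ilist{4close}{b_c} \rvert \le 250$ by using the defining condition $\tcr(b_c) \not\adequatesep d$ to confine $d$ to an $\ell^\infty$-ball of radius $3\lvert b_c \rvert (1 + t_f) < 9 \lvert b_c \rvert$ around $b_c$, and the membership $d \in \ilist{4}{w}$ for some $w \in \ancestors(b_c) \cup \{b_c\}$ to enforce $\lvert d \rvert \ge \lvert b_c \rvert$. The same size-stratified count of octree boxes yields the constant bound. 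Multiplying by at most $\nmax$ sources per $d$ and summing over $N_C$ centers gives the $250\, N_C\, \nmax\, \PtoPcost$ contribution; adding the two yields the stated total.

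The main obstacle I anticipate is the combinatorial bookkeeping that extracts the specific constants $375$ and $250$ rather than merely \emph{some} $O(1)$ bound. At each admissible size level $2^k \lvert b \rvert$ the bounding region contains $O((18/2^k)^3)$ candidate boxes, and the tight constants require (i) carefully excluding boxes that actually belong to $\ilist{1}{b}$, $\ilist{2}{b}$, or the $\ilist{3}{}$ family rather than to an $\ilist{4}{w}$ for an ancestor, and (ii) accounting for the asymmetry introduced by the downward propagation of List 4 close into List 4 far, which is what makes the two constants differ. The two-dimensional analog in~\cite{gigaqbx2d} provides a template for this bookkeeping, with the larger constants here reflecting the extra geometric dimension.
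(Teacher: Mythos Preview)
Your high-level plan matches the paper: bound $|\ilist{4close}{b}|$ and $|\ilist{4far}{b}|$ by absolute constants, then multiply by $\nmax$ and by $N_C$ or $N_B$. The arithmetic input $t_f < 2 \Rightarrow 3(1+t_f) < 9$ is also the same. Where you diverge is in how the constants are extracted.

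The paper does not do a size-stratified volume count with post-hoc exclusion of other lists. Instead it first proves $|\ilist{4}{b}| \le 125$ directly (at most $5^3 - 3^3 = 98$ non-adjacent $2$-colleagues of $b$, plus at most $27$ larger leaves adjacent to $\parent(b)$ via Proposition~\ref{prop:num-adjacent-big-leaves}). Then the $9|b|$ separation observation is used not to delimit a region to count in, but to rule out ancestors: any $d \in \ilist{4}{w}$ with $w$ two or more levels above $b$ is separated from $b$ by at least $8|b|$, hence the center of $b$ is at $\ell^\infty$ distance $\ge 9|b|$ from $d$, so $\tcr(b) \adequatesep d$. This yields the containment $\ilist{4close}{b} \subseteq \ilist{4}{b} \cup \ilist{4}{\parent(b)}$ and thus $|\ilist{4close}{b}|\le 250$. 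For the far list the paper simply reads off Definition~\ref{def:list-4-far}: $\ilist{4far}{b} \subseteq \ilist{4}{b} \cup \ilist{4close}{\parent(b)}$, giving $125 + 250 = 375$. Your geometric count can be made to work, but to land on exactly these constants you would end up rediscovering the same containments, so the paper's route is shorter.

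One genuine gap: your argument that every $d \in \ilist{4far}{b}$ satisfies $\tcr(\parent(b)) \not\adequatesep d$ appeals to a ``propagation rule'' that would otherwise have placed $d$ in $\ilist{4far}{\parent(b)}$. This does not cover the case $d \in \ilist{4}{b}$ with $d$ a $2$-colleague of $b$; such a box is new at $b$'s level and was never a candidate at $\parent(b)$. The conclusion is still true for a different reason (a $2$-colleague of $b$ lies in the $1$-near-neighborhood of $\parent(b)$, so its distance to the center of $\parent(b)$ is at most $3|\parent(b)|$), but your stated justification does not establish it.
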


\begin{proof}
  First, we show $|X_b| \leq 125$.  Every box in $X_b$ is a leaf that is
  either a $2$-colleague of $b$ not adjacent to $b$, or adjacent to the parent
  of $b$ and at least as large as the parent of $b$. There are at most $5^3 -
  3^3 = 98$ boxes that fall into the first category and, by
  Proposition~\ref{prop:num-adjacent-big-leaves}, at most $27$ boxes that fall
  into the second category.

  Next, we show that $\ilist{4close}{b} \subseteq \ilist{4}{b} \cup
  \ilist{4}{\parent(b)}$, which implies that $|\ilist{4close}{b}| \leq
  250$. Recall that $\ilist{4close}{b}$ must be a subset of the List 4's of the
  ancestors of $b$. If $b'$ is an ancestor of $b$, then $b$ must be separated by
  an $\ell^\infty$ distance of $2^{k+1}|b|$ from any box in $\ilist{4}{b'}$. In
  particular, consider a box $e \in \ilist{4}{g}$, where $g$ is $k \geq 2$
  levels above $b$. Then $e$ will be separated by an $\ell^\infty$ distance of
  at least $8|b|$ from $b$. It follows that the $\ell^\infty$ distance from the
  center of $b$ to the boundary of $e$ is at least $9|b|$, which is at least
  $3(1 + t_f)|b|$ (since $t_f < 2$ by assumption). Thus $\tcr(b) \adequatesep e$
  from the definition of `$\adequatesep$'. It follows that $\ilist{4close}{b}$
  is disjoint from the List 4 of a grandparent of $b$ or above.

  Finally, $|\ilist{4far}{b}| \leq 375$ follows since, by definition,
  $\ilist{4far}{b} \subseteq X_b \cup \ilist{4close}{\parent(b)}$.

  The cost estimate follows since, for List 4 close, each center will interact
  directly with at most $250 \nmax$ source particles, with each interaction
  costing $\PtoPcost$. For List 4 far, each box will interact with at most $375
  \nmax$ source particles, at a cost of $\PtoEcost$ per interaction.
\end{proof}


\printbibliography{}

\end{document}
